\journal{Advances in Applied Mathematics}
\newcommand{\ie}{i.e.\ }
\newcommand{\eg}{e.g.\ }
\newcommand{\cf}{cf.\ }
\newcommand{\reals}{\mathbb{R}}
\newcommand{\integers}{\mathbb{Z}}
\newcommand{\isom}{\mathsf{ISO}}
\newcommand{\tran}{\mathsf{T}}
\newcommand{\rota}{\mathsf{R}}
\newcommand{\perm}{\mathsf{P}}
\newcommand{\nega}{\mathsf{N}}
\newcommand{\orth}{\mathsf{O}}
\newcommand{\hcal}{\mathcal{H}}
\newcommand{\tcal}{\mathcal{T}}
\newcommand{\rcal}{\mathcal{R}}
\newcommand{\ncal}{\mathcal{N}}
\newcommand{\pcal}{\mathcal{P}}
\newcommand{\acal}{\mathcal{A}}
\newcommand{\bcal}{\mathcal{B}}
\newcommand{\dist}{{\rm d}}
\newcommand{\idty}{{\rm id}}
\newcommand{\ambsp}{\integers^n}
\newcommand{\iz}{\isom(\ambsp)}
\newcommand{\tz}{\tran(\ambsp)}
\newcommand{\rz}{\rota(\ambsp)}
\newcommand{\pz}{\perm(\ambsp)}
\newcommand{\nz}{\nega(\ambsp)}
\newcommand{\onz}{\orth_n(\integers)}
\newcommand{\rhowl}{\rho{_\mathsf{WL}}}
\newcommand{\ginner}{\left[A_k\cdots A_1\right]}
\newcommand{\atomicsuper}{A_k \cup \cdots \cup A_1}
\newcommand{\sginner}[1]{\left[(\acal_k)_{#1} \cdots (\acal_1)_{#1}\right]}
\theoremstyle{definition}
\newtheorem{theorem}{Theorem}[section]
\newtheorem{definition}{Definition}[section]
\newtheorem{lemma}{Lemma}[section]
\newtheorem{corollary}{Corollary}[section]
\newtheorem{remark}{Remark}[section]
\begin{document}

\begin{frontmatter}

\title{Orbit Computation for Atomically Generated Subgroups of Isometries of $\ambsp$\tnoteref{mytitlenote}}
\tnotetext[mytitlenote]{This work was funded in part by the IBM-Illinois Center for Cognitive Computing Systems Research (C3SR), a research collaboration as part of the IBM AI Horizons Network; and in part by grant number 2018-182794 from the Chan Zuckerberg Initiative DAF, an advised fund of Silicon Valley Community Foundation.}


\author[mymainaddress]{Haizi Yu\corref{mycorrespondingauthor}}
\ead{haiziyu7@illinois.edu}
\author[mysecondaryaddress]{Igor Mineyev}
\ead{mineyev@illinois.edu}
\author[mymainaddress]{Lav R. Varshney}
\ead{varshney@illinois.edu}

\cortext[mycorrespondingauthor]{Corresponding author}
\address[mymainaddress]{Coordinated Science Laboratory, University of Illinois at Urbana-Champaign,\\1308 W Main Street, Urbana, IL 61801, USA}
\address[mysecondaryaddress]{Department of Mathematics, University of Illinois at Urbana-Champaign,\\1409 W Green Street, Urbana, IL 61801, USA}

\begin{abstract}
Isometries are ubiquitous in nature; isometries of discrete (quantized) objects---abstracted as the group of isometries of $\ambsp$ denoted by $\iz$---are important concepts in the computational world.
In this paper, we compute various isometric invariances which mathematically are orbit-computation problems under various isometry-subgroup actions $H \curvearrowright \ambsp, H \leq \iz$.
One computational challenge here is about the \emph{infinite}: in general, we can have an infinite subgroup acting on $\ambsp$, resulting in possibly an infinite number of orbits of possibly infinite size.
In practice, we restrict the set of orbits (a partition of $\ambsp$) to a finite subset $Z \subseteq \ambsp$ (a partition of $Z$), where $Z$ is specified a priori by an application domain or a data set.
Our main contribution is an efficient algorithm to solve this \emph{restricted} orbit-computation problem in the special case of \emph{atomically generated subgroups}---a new notion partially motivated from interpretable AI.
The atomic property is key to preserving the \emph{semidirect-product structure}---the core structure we leverage to make our algorithm outperform generic approaches.
Besides algorithmic merit, our approach enables \emph{parallel-computing} implementations in many subroutines, which can further benefit from hardware boosts.
Moreover, our algorithm works efficiently for \emph{any} finite subset ($Z$) regardless of the shape (continuous/discrete, (non)convex) or location; so it is application-independent.
\end{abstract}

\begin{keyword}
restricted orbit computation \sep semidirect product \sep atomic \sep isometry
\MSC[2010] 20-04 \sep 58D19
\end{keyword}

\end{frontmatter}


\section{Introduction}
\label{sec:introduction}

An \emph{isometry} of a metric space is an important concept in geometry~\cite{Coxeter1969}.
Isometries of various kinds are ubiquitous in the world, and are embedded as an innate preference in biological perception~\cite{Goldman1986,Ullman1979}.
In light of digital computers, isometries of discrete/quantized objects are widely studied in the computational modeling of real-world data observed in different human perception modalities.
Examples include vision (computer graphics and animations~\cite{Carlson2017}), audition (computer music~\cite{Tymoczko2010}), motion and kinematics (robotics~\cite{StramigioliB2001}), and experimental science (crystallography~\cite{HahnSA1983,Bieberbach1911}, physics~\cite{Noether1915}, and biology~\cite{YuVS2019,ClarkSSCDSSHRJRO2019}).
These studies are concerned with not only isometry classifications, but also the symmetries induced by various subclasses of isometries (\ie isometry subgroups).
Isometry-induced symmetries, among many other types of symmetries, are strongly connected to invariance theory~\cite{DerksenK2015,Olver1995}, and are key to computational abstraction wherein the abstracted concepts are invariant under the considered isometry subgroup~\cite{YuMV2019}.
Mathematically, the symmetry induced by any isometry subgroup is represented by \emph{orbits} under the corresponding isometry subgroup action.
So, it is important to have an algorithm that efficiently and explicitly computes the orbits.

However, computing orbits can be challenging when both the subgroup and the space are infinite.
In this case, the desired output---the set of orbits---is a partition of the infinite space, which immediately poses the question of how to represent this partition realistically and if possible, explicitly.
To address this question, we present only \emph{part of} the desired output: the partition restricted to some finite subset of the whole space where the finite subset is usually determined beforehand by an actual application domain or a data set.
Even though the representation of the output now becomes finite (and thus realistic), in order to compute it, we almost always have to use the whole subgroup acting on the whole infinite space.
The stopping criterion for exhausting an orbit is unclear when the full orbit is infinite.
In the worst case, orbit computation for an infinite group acting on an infinite space is unsolvable: the famous \emph{word problem} can be cast as an orbit computation problem~\cite{YuMV2019, Novikov1955, Boone1958, Britton1958}.

In this paper, for a subgroup $H$ (possibly infinite) of the group of isometries of $\ambsp$, we compute orbits for the action $H \curvearrowright \ambsp$.
Further, to derive a highly efficient and specialized algorithm for computing restricted orbits, we focus on a special type of subgroups, namely \emph{atomically generated subgroups}---a new notion we introduce in the paper.
Such subgroups have a canonical \emph{semidirect-product decomposition} which is the essential structure we leverage to make our specialized algorithm outperform generic ones.
Besides pure algorithmic merit, our approach enables \emph{parallel-computing} implementations in many subroutines, which can further benefit from hardware boosts (\eg a GPU/TPU implementation).
In addition, compared to implicit methods and/or approximate methods, our algorithm outputs a precise and explicit form of the orbits restricted to any finite $Z \subseteq \ambsp$, and the efficiency of the algorithm is not sensitive to $Z$: no matter whether $Z$ is continuous or discrete, convex or not, and no matter where $Z$ lies in $\ambsp$.
The fact that our algorithm can efficiently handle any finite subset $Z$ demonstrates a \emph{universal applicability} of the algorithm since $Z$ is predetermined by any particular application (which is outside our control).

\section{Problem Preview, Related Work, and Our Contributions}
\label{sec:problem-preview-related-work-and-our-contributions}

Let us preview the problem we want to solve in general (see a more detailed, step-by-step derivation in Section~\ref{sec:the-problem-of-orbit-computation}):
\begin{equation}
\label{eqn:problem-sneak-peek}
\begin{aligned}
\mbox{\textbf{Inputs:}} \quad & \mbox{1) any finite } S \subseteq \iz; \\
& \mbox{2) any finite } Z \subseteq \ambsp. \\
\mbox{\textbf{Output:}} \quad & Z/\langle S \rangle := (\ambsp/\langle S \rangle)|_Z := \{\hspace{0.03in}(\langle S \rangle \cdot x) \cap Z \mid x \in Z\hspace{0.03in}\}.
\end{aligned}
\end{equation}
Note that Problem~\eqref{eqn:problem-sneak-peek} is computationally well-defined, meaning the inputs as well as the desired output are finite objects.
Any real application of the problem predetermines the finite subset $Z \subseteq \ambsp$, and for any partition $\pcal$ of $\ambsp$, the notation $\pcal|_Z := \{P\cap Z \mid P \in \pcal\} \backslash \emptyset$ denotes the partition restricted to $Z$.
As required by most applications, we want the output $Z/\langle S \rangle$ to be in a form that \emph{explicitly} gives the partition of $Z$: for every cell in the partition, list all elements in the cell;
or alternatively, for every element in $Z$, specify which cell it belongs to.
In contrast, an impractical formula of some computable entities (\eg an infinite union of finite sets or the linear span of a finite basis) is \emph{not} satisfactory.

Problem~\eqref{eqn:problem-sneak-peek} has close connections to many problems in computational group theory.
One might try to either directly apply some existing and more generic techniques to solve Problem~\eqref{eqn:problem-sneak-peek} as a special case, or instead decompose Problem~\eqref{eqn:problem-sneak-peek} into subproblems and borrow different techniques to solve the subproblems individually.
However, by the end of this section, we will conclude: to the best of our knowledge, no existing algorithm can solve Problem~\eqref{eqn:problem-sneak-peek} directly;
hence our central contributions are in defining a special case of Problem~\eqref{eqn:problem-sneak-peek} where the problem can be decomposed such that for each subproblem we are able to solve it by either borrowing a state-of-the-art technique or designing a new approach that outperforms the state-of-the-art.

\subsection{No Existing Algorithm Works for the Whole Problem Directly}
\label{sec:none-existing-algorithm-work-directly}
Problem~\eqref{eqn:problem-sneak-peek} by nature is an orbit computation problem under some isometry subgroup action.
It is a special case of an orbit computation problem in general;
further, the subgroups are special cases of affine crystallographic groups.

\begin{enumerate}[label=$\blacksquare~$]
\item \textbf{Generic Orbit-Computation Algorithms Cannot Be Used.}
As a special case of a generic orbit computation problem, one may try directly using a generic algorithm, \eg $\textsc{Orbit}(x,S)$ from Chapter\ 4.1 in \cite{HoltEO2005}, to get the orbit of $x$.
However, $\textsc{Orbit}(x,S)$ and similar algorithms are not directly applicable to Problem~\eqref{eqn:problem-sneak-peek}, since they are designed for a finite orbit but \emph{not} for a possibly infinite orbit restricted to any finite subset.

Attempts to apply/modify algorithms like $\textsc{Orbit}(x,S)$ to solve our restricted orbit computation Problem~\eqref{eqn:problem-sneak-peek} will encounter one of two difficulties:
a) running such an algorithm in the full scope $\ambsp$ is endless when the full orbit is infinite;
b) running such an algorithm only in the local scope $Z$ will halt but the result is incorrect in general.
The risk in the latter case comes from the fact that the underlying group action is still $H \curvearrowright \ambsp$ even though every instance of Problem~\eqref{eqn:problem-sneak-peek} only computes a partition of $Z$ (there is \emph{no} guarantee that $H$ will act on $Z$).
One may fail to discover $x' \sim x$ if every path $x \xmapsto{s_1} \cdots \xmapsto{s_k} x'$ (for $s_1, \ldots, s_k \in S$) has to go outside $Z$.
For example, consider $Z = \{0,1\}^2 \subseteq \integers^2$, and $S = \{t_{\bm 1}, r_{-I}\}$ where $t_{\bm 1}: \integers^2 \to \integers^2$ is a translation defined by $t_{\bm 1}(x) := x + {\bm 1} = x +(1,1)$ and $r_{-I}: \integers^2 \to \integers^2$ is a negation defined by $r_{-I}(x) := -Ix = -x$.
One can check: from $(1,0)$ you can go nowhere, since applying either generator in $S$ will go outside the scope $Z$.
However, $(1,0)$ and $(0,1)$ should be in the same orbit, since $(1,0) \xmapsto{r_{-I}} (-1,0) \xmapsto{t_{\bm{1}}} (0,1)$ but this path is not entirely in $Z$ (particularly, $(-1,0) \notin Z$).

\underline{Note:} a safer way is to discover orbit relation in some enlarged finite superset $Z^+ \supseteq Z$ and restrict the result back to $Z$ in the last step (clearly, the safest way is to consider $Z^+ = \ambsp$ which however is impractical).
Nevertheless, how much one should enlarge $Z$ depends on $S$ and $Z$ (particularly the shape and the location of $Z$ in $\ambsp$); unfortunately in the worst case, $Z^+$ has to approach the entire $\ambsp$.

\item \textbf{Generic Space-Group Algorithms Cannot Be Used.}
The group $\iz$ is a special space group, and every subgroup $H \leq \iz$ is a special affine crystallographic group (whose translation subgroup is not necessarily full rank).
So, one may try using Cryst---a GAP4~\cite{gap4} package which solves computational problems about affine crystallographic groups.
However, as noted in its manual~\cite{EickGN2019}, ``For infinite (affine crystallographic) groups, some restrictions apply.
For instance, algorithms from the orbit-stabilizer family can work only if the orbits generated are finite.''

One topic closely related to orbit computation for possibly infinite affine crystallographic groups is computing the so-called Wyckoff positions~\cite{EickGN1997} (also included in Cryst).
However, computing Wyckoff positions is insufficient for computing orbits, since the Wyckoff positions only give a coarser partition compared to the set of orbits: points from different orbits may be in the same Wyckoff position.
For example, consider $H = \{t_v \mid v \in (2\integers)^2\} \cong (2\integers)^2$, a space group of translations only (so its point group is trivial).
$\mbox{Stab}_H(x)$ is trivial for any $x \in \reals^n$, so there is only one Wyckoff position of $H$ (namely $\reals^n$).
However, $(0,0)$ and $(1,1)$ are not in the same orbit.
Further, even when a Wyckoff position coincides with an orbit, the existing algorithms only return an affine subspace $A$ and use $A$ to indirectly describe the Wyckoff position through the (possibly infinite) union $\bigcup_{g\in G}g(A)$.
It is unclear if there is an efficient way of further converting this output to our desired output, which requires explicitly listing every point in the intersection of the orbit and the finite subset $Z$.

One may potentially make another connection to polycyclic affine crystallographic groups.
However, not all affine crystallographic groups are polycyclic, especially in high-dimensional spaces (\cf Lemma~8.30 in \cite{HoltEO2005}).
Our Problem~\eqref{eqn:problem-sneak-peek} is \emph{not} restricted to low-dimensional spaces; for example, the point group that is isomorphic to the alternating group $A_5$ (simple and non-cyclic) is not polycyclic.
\end{enumerate}

\subsection{Contributions: What We Do and What We Do Not}
\label{sec:contributions}

Our top-level contribution is to introduce a new, special condition---namely the \emph{atomic} condition---to Problem~\eqref{eqn:problem-sneak-peek}.
Under this condition, the whole problem can be decomposed recursively based on semidirect-product structures into subproblems that are easy to solve.
To solve each subproblem, we either plug in a state-of-the-art algorithm (in which case we do not claim a contribution) or propose a new algorithm that outperforms the state-of-the-art (if any and if applicable, in which case we claim a contribution).
We list our contributions more explicitly as follows.
\begin{enumerate}[label=$\blacksquare~$]
\item \textbf{The Atomic Condition and Semidirect-Product Decomposition (Contribution)}.
The core idea in decomposing Problem~\eqref{eqn:problem-sneak-peek} is based on a nested semidirect-product structure of the whole group of isometries: $\iz$ decomposes into translations and rotations; rotations further into negations and permutations (their definitions are detailed later).
However, not all subgroups $H \leq \iz$ share such a structure.
So, we introduce \emph{atomically generated subgroups} which we prove to inherit the global structure.
Our work is then to efficiently solve Problem~\eqref{eqn:problem-sneak-peek} in the special case of atomically generated subgroups (\ie the finite $S$ is further atomic).

For the same reasons, one can check: the existing algorithms mentioned in Section~\ref{sec:none-existing-algorithm-work-directly} are still not directly applicable when restricting Problem~\eqref{eqn:problem-sneak-peek} to the above special case.
Comparing to other structures widely-leveraged in computational group theory, we will show that the semidirect-product structure in this paper is dual to a polycyclic structure and is stronger than a decomposition of normal subgroup and its quotient; accordingly, being an atomically generated subgroup of $\iz$ is different from being polycyclic and is stronger than being affine crystallographic in general.

\item \textbf{Subproblem Regarding Translations (Partial Contribution).}
In this part, we compute orbits under the translation subgroup (of $\langle S \rangle$) only.
As a sketch, we first seek a basis of the translation subgroup and then use the basis to compute the orbits.
Our contribution in solving this subproblem is to derive an explicit formula that computes an orbit label for every $x \in Z$.
Orbit computation via this formula is highly efficient since it enjoys three computational benefits.
First, the formula is explicit and analytic as opposed to an iterative process.
Second, the formula involves only matrix operations which are nowadays much more efficient than other types of computational operations; this is not purely from the algorithmic perspective but also from recent advances in hardware (\eg GPUs and TPUs).
Third, applying the formula to every $x \in Z$ is independent (\ie it does not depend on the result from any other $x' \in Z$); this particularly means that a) the computation for every $x \in Z$ can be parallelized and b) we can handle any finite $Z \in \ambsp$ regardless of its shape or location.

We do \emph{not} claim any contribution in computing a basis from a generating set, which is solved by directly using the LLL (Lenstra-Lenstra-Lov{\'a}sz) algorithm originally designed for Hermitian-normal-form calculation~\cite{LenstraLL1982}.

\item \textbf{Subproblem Regarding Rotations (Partial Contributions).}
In this part, we further merge the orbits computed under the translation subgroup by using the rotation subgroup (of $\langle S \rangle$).
As a sketch, we merge two orbits if there exist two points---one from each orbit---related by a rotation in the rotation subgroup.
Our contribution in solving this subproblem is to efficiently compute any atomically generated rotation subgroup.
We once again leverage the semidirect-product structure but at a smaller scale regarding the negation-permutation splitting of rotations.
Note that computing rotation subgroups is not hard in general since they are matrix groups and the whole group of rotations of $\ambsp$ is finite.
However, our approach outperforms more generic algorithms, \eg computational methods for matrix groups over finite fields~\cite{BaarnhielmHLGO2015}.
This is because in our special case, we particularly have the atomic property and the resulting semidirect-product structure to leverage;
whereas in general, the structure is not immediate for free but must be discovered using additional computations (\eg construct a composition tree/series which is not needed in our case).

We do \emph{not} claim any contribution in computing permutation subgroups; we directly plug in any off-the-shelf solver instead.
In special cases when the dimensionality $n$ is small, we precompute---a one-time computation---and cache the full family of all subgroups of the symmetric group ${\rm Sym}_n$ in the memory and/or hard drive, and use the cached data later.

\end{enumerate}

\section{Semidirect Product: Review and Generalization}
\label{sec:semidirect-product-review-and-gen}

In a general setting, we first review the (inner) semidirect product of two subgroups, and then generalizes it to the semidirect product of $k$ subgroups.
The resulting $k$-ary semidirect-product decomposition of a group is the core structure upon which we build the main results of this paper.

\begin{definition}\label{def:product-of-sets}
Let $G$ be a group, and $A_1, \ldots, A_k \subseteq G$ be subsets.
We define the product of these subsets (which itself is a subset of $G$) by
\begin{align}\label{eqn:product-of-sets}
A_k\cdots A_1 := \{a_k\cdots a_1 \mid a_i \in A_i, i = 1, \ldots, k\}.
\end{align}
\end{definition}

\begin{definition}\label{def:semidirect-product-two-inner}
Let $G$ be a group, and $A,B \leq G$ be two subgroups.
We write the bracket notation $\left[AB\right]$ to mean:
\begin{enumerate}[label=$\langle\,\arabic*.\,\rangle$]
\item $\left[AB\right] = AB$ as a set;
\item $B\leq {\rm N}_G(A)$ where ${\rm N}_G(A)$ denotes the normalizer of $A$ in $G$;
\item $A \cap B = \{e\}$ where $e$ denotes the identity element in $G$.
\end{enumerate}
We call $\left[AB\right]$ the \emph{(inner) semidirect product} of $A$ and $B$.
(\underline{Note:} one can check that $[AB] \leq G$).
Further, if $G = \left[AB\right]$, then we say $G$ is the semidirect product of $A$ and $B$, or $[AB]$ is a semidirect-product decomposition of $G$.
\end{definition}

This definition is readily generalized to $k$ subgroups as follows.

\begin{definition}\label{def:semidirect-product-decomp}
Let $G$ be a group, and $A_1, \ldots, A_k \leq G$ be $k$ subgroups.
We define $\ginner$ recursively (on $k$) by
\begin{align}\label{eqn:semidirect-product-decomp}
\ginner := \left[A_k\left[A_{k-1}\cdots A_1\right]\right] \quad \mbox{ where for consistency } \left[ A_1 \right] := A_1.
\end{align}
Further, if $G = \ginner$, then we say $G$ is the $k$-ary semidirect product of $A_k, \ldots, A_1$, or $\ginner$ is a \emph{$k$-ary semidirect-product decomposition} of $G$.
\end{definition}

\begin{remark}
Based on the binary bracket notation in Definition~\ref{def:semidirect-product-two-inner}, the following information is automatically encoded in the $k$-ary notation $\ginner$:
\begin{enumerate}[label=$\blacksquare~$, itemsep=-0.03in]
\item $\left[A_j \cdots A_1 \right] \leq G$ recursively for all $j \in \{2, \ldots, k\}$;
\item part of this notation is the requirement that $\left[A_{j-1}\cdots A_1\right] \leq {\rm N}_G(A_j)$ for all $j \in \{2, \ldots, k\}$, which further implies that $A_j \trianglelefteq \left[A_j\cdots A_1\right]$;
\item $A_j \cap \left[ A_{j-1}\cdots A_1\right] = \{e\}$ for all $j \in \{2, \ldots, k\}$, which further implies that $A_i \cap A_j = \{e\}$ for any distinct $i, j \in \{1, \ldots, k\}$.
\end{enumerate}
\end{remark}

\begin{remark}[Interesting comparison to subnormal series]
The notion of a $k$-ary semidirect-product decomposition is \emph{dual} to a subnormal series.
Compare $G = A_r \geq A_{r-1} \geq \cdots \geq A_{0} = 1$ with $G = \ginner$.
In the former, we have $A_i \trianglelefteq A_{i+1}$;
in the latter, we have $A_j \trianglelefteq \left[A_j\cdots A_1\right]$.
For a subnormal series, one keeps finding a normal subgroup within a normal subgroup;
whereas for a $k$-ary semidirect-product decomposition, one keeps finding a normal subgroup within the quotient by a normal subgroup.
Figure~\ref{fig:subnormal-series-and-semidirect-product} summarizes this duality.
\end{remark}

\begin{figure}[t!]
\begin{center}
\includegraphics[width=0.95\columnwidth]{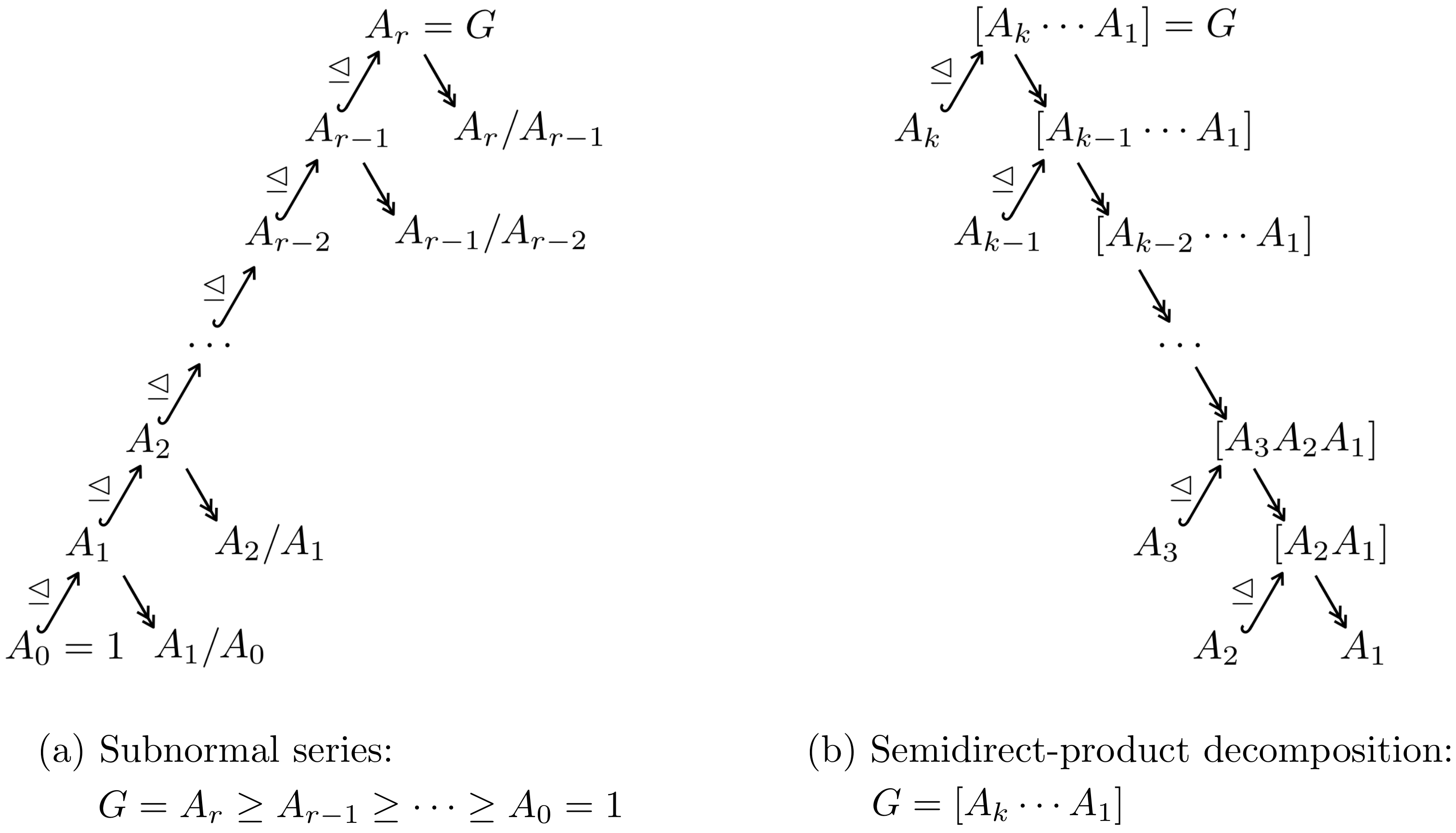}
\end{center}
\caption{Compare a subnormal series (left) with a semidirect-product decomposition (right).
For a subnormal series, we have a normal subgroup within a normal subgroup: $A_i \trianglelefteq A_{i+1}$;
for a semidirect-product decomposition, we have a normal subgroup within the complement of a normal subgroup: $A_j \trianglelefteq \left[A_j\cdots A_1\right]$.}
\label{fig:subnormal-series-and-semidirect-product}
\end{figure}

\section{The Mathematical Objects}
\label{sec:the-math-objects}

To prepare for the formal derivation of our orbit computation problem later (in Section~\ref{sec:the-problem-of-orbit-computation}), we first introduce the main mathematical object in this paper, namely \emph{the isometry group of $\ambsp$} denoted by $\iz$, and then a special class of subgroups of $\iz$, namely the class of \emph{atomically generated subgroups}.

\subsection{The Isometry Group of $\ambsp$: $\iz$}
\label{sec:the-isometry-group-of-zn}

Our ambient space is the metric space $(\ambsp, \dist)$, where $\dist: \ambsp \times \ambsp \to \reals$ is the Euclidean distance.
An \emph{isometry of $\ambsp$} is a function $h: \ambsp \to \ambsp$ satisfying the distance-preserving property: $\dist(h(x),h(x'))  = \dist(x,x')$ for any $x,x' \in \ambsp$.
We use $(\iz, \circ)$ to denote the group of all isometries of $\ambsp$ or in short, \emph{the isometry group of $\ambsp$}, which can be characterized via a semidirect product~\cite{YuMV2019}:
\begin{align}\label{eqn:isom-zn-char}
\iz = \left[ \tz \circ \rz \right].
\end{align}
In the above characterization:
\begin{enumerate}[label=$\blacksquare~$, itemsep=-0.03in]
\item $(\tz, \circ)$ denotes the group of translations of $\ambsp$, where a \emph{translation of $\ambsp$} is a function $t_v: \ambsp \to \ambsp$ defined by $t_v(x) := x + v$ with the parameter $v \in \ambsp$ being called the \emph{translation vector};
\item $(\rz, \circ)$ denotes the group of (generalized) rotations of $\ambsp$, where a \emph{rotation of $\ambsp$} is a function $r_R: \ambsp \to \ambsp$ defined by $r_R(x) := Rx$ with the parameter $R \in \onz$ being called the \emph{rotation matrix}.
\underline{Note:}
$\onz := \{R \in \integers^{n \times n} \mid R^\top = R^{-1}\}$; the word \emph{rotation} throughout this paper is a shorthand term for, more precisely, \emph{generalized rotation about the origin}, which is linear and can be either a proper rotation (whose rotation matrix has determinant $1$) or an improper rotation (whose rotation matrix has determinant $-1$).
\end{enumerate}

$\iz$ has an additional property regarding a finer dissection of $\rz$~\cite{YuMV2019}.
Expressed by another semidirect product at a smaller scale,
\begin{align}\label{eqn:rota-zn-char}
\rz = \left[ \nz \circ \pz \right].
\end{align}
In the above characterization:
\begin{enumerate}[label=$\blacksquare~$, itemsep=-0.03in]
\item $(\nz, \circ)$ denotes the group of (coordinate-wise) negations of $\ambsp$, where a \emph{negation of $\ambsp$} is a rotation $r_N: \ambsp \to \ambsp$ with the rotation matrix $N$ being a \emph{negation matrix}---a diagonal matrix whose diagonal entries are $\pm 1$.
\underline{Note:}
the word \emph{negation} throughout this paper is a shorthand term for, more precisely, \emph{coordinate-wise negation}, which negates some (or all) coordinates of a vector.
Here are two examples of a negation matrix:
\begin{align*}
N = \begin{bmatrix} -1 & 0 & 0 \\ 0 & 1 & 0 \\ 0 & 0 & -1\end{bmatrix}, \quad
N' = \begin{bmatrix} -1 & 0 & 0 \\ 0 & -1 & 0 \\ 0 & 0 & 1\end{bmatrix}.
\end{align*}
They induce two negations of $\integers^3$, $r_N$ and $r_{N'}$, such that for any $x = (x_1, x_2, x_3) \in \integers^3$, $r_N(x) = (-x_1, x_2, -x_3)$ and $r_{N'}(x) = (-x_1, -x_2, x_3)$.
\item $(\pz, \circ)$ denotes the group of (coordinate-wise) permutations of $\ambsp$, where a \emph{permutation of $\ambsp$} is a rotation $r_P: \ambsp \to \ambsp$ with the rotation matrix $P$ being a \emph{permutation matrix}---a matrix obtained by permuting the rows of an identity matrix.
\underline{Note:}
the word \emph{permutation} throughout this paper is a shorthand term for, more precisely, \emph{coordinate-wise permutation}, which permutes the coordinates of a vector.
Here are two examples of a permutation matrix:
\begin{align*}
P = \begin{bmatrix} 0& 1 & 0 \\ 1 & 0 & 0 \\ 0 & 0 & 1\end{bmatrix}, \quad
P' = \begin{bmatrix} 0 & 0 & 1 \\ 1 & 0 & 0 \\ 0 & 1 & 0\end{bmatrix}.
\end{align*}
They induce two permutations of $\integers^3$, $r_P$ and $r_{P'}$, such that for any $x = (x_1, x_2, x_3) \in \integers^3$, $r_P(x) = (x_2, x_1, x_3)$ and $r_{P'}(x) = (x_3, x_1, x_2)$.
\end{enumerate}
Clearly, the rotation group $\rz$ is finite: $|\rz| = |\nz|\cdot |\pz| = 2^n(n!)$.

Expressions~\eqref{eqn:isom-zn-char} and \eqref{eqn:rota-zn-char} reveal the semidirect-product structure at two different scales.
Putting them together, we have a ternary semidirect product:
\begin{align}\label{eqn:isom-zn-ternary-semidirect-product}
\iz = \left[ \tz \circ \nz \circ \pz \right].
\end{align}

\subsection{Atomically Generated Subgroups}
\label{sec:atomically-gen-subgroups}

In this paper, we consider a special class of subgroups of $\iz$, namely the class of \emph{atomically generated subgroups} of $\iz$.
Every atomically generated subgroup has a so-called \emph{atomic generating set}.
To formally introduce the notion of \emph{atomic}, we start with definitions in a more general setting.

\begin{definition}\label{def:atomic-set}
Let $G = \ginner$ be a semidirect-product decomposition of $G$.
A subset $S \subseteq G$ is \emph{atomic (with respect to the semidirect-product decomposition)}, if
\begin{align}\label{eqn:atomic-wrt-semidirect-product-decomp}
S \subseteq \atomicsuper.
\end{align}
\end{definition}

\begin{definition}\label{def:atomically-generated}
Let $G = \ginner$ be a semidirect-product decomposition of $G$.
A subgroup $H \leq G$ is \emph{atomically generated (with respect to the semidirect-product decomposition)}, if it has an atomic generating set, \ie there exists an atomic subset $S \subseteq G$ such that $H = \langle S \rangle$.
\end{definition}

Returning to our main mathematical object $\iz$, we have so far introduced two semidirect-product decompositions of it, namely the binary one in Expression~\eqref{eqn:isom-zn-char} and the ternary one in Expression~\eqref{eqn:isom-zn-ternary-semidirect-product}.
In the sequel, if the decomposition of $\iz$ is not explicitly specified, we assume it is by default the ternary decomposition in Expression~\eqref{eqn:isom-zn-ternary-semidirect-product}.
Therefore, a subset of isometries $S \subseteq \iz$ is atomic if $S \subseteq \tz \cup \nz \cup \pz$.

\vspace{0.1in}
Before closing the section, we introduce a shorthand notation for referencing any component of an atomic subset, as well as any component of an atomically generated subgroup.
It is designed to make such references simple, systematic, and consistent with the underlying semidirect-product decomposition.

Let $G = \ginner$ be a semidirect-product decomposition of $G$, and let $2^G$ denote the power set of $G$.
For every $i \in \{1,\ldots, k\}$, define the function $\acal_i: 2^G \to 2^G$ and its subscript shorthand notation by
\begin{align}\label{eqn:shorthand-notation-for-semidirect-product-components}
(\acal_i)_S := S \cap A_i \quad \mbox{ for any } S \subseteq G.
\end{align}
Note that the above notation and definition apply to all subsets of $G$.
However, their main use will be for atomic subsets and atomically generated subgroups.
First, for any atomic subset $S \subseteq G$, it is immediate from Definition~\ref{def:atomic-set} that $S$ can be always decomposed as follows:
\begin{align}\label{eqn:atomic-set-decomp}
S = (\acal_1)_S \cup \cdots \cup (\acal_k)_S.
\end{align}
Indeed, one can check that Equation~\eqref{eqn:atomic-set-decomp} holds if and only if $S$ is atomic.
Second, for any atomically generated subgroup $H \leq G$, we will soon see (in Section~\ref{sec:specialty-of-atomically-generated-subgroups}: Theorem~\ref{thm:distinguishing-property-atomically-generated-subgroup-k-ary}) that $H$ can be always decomposed as follows:
\begin{align}\label{eqn:atomically-generated-subgroup-decomp}
H = \sginner{H},
\end{align}
inheriting the semidirect-product structure from $G$.
A sanity check: when $H = G$, Expression~\eqref{eqn:atomically-generated-subgroup-decomp} is precisely $G = \ginner$.

In our special case when $G = \iz$, we have the following four particular notations: for any $S \subseteq \iz$,
\begin{align*}
\tcal_S := S \cap \tz,~~
\rcal_S := S \cap \rz,~~
\ncal_S := S \cap \nz,~~
\pcal_S := S \cap \pz,~~
\end{align*}
which represent the (pure) translations, rotations, negations, and permutations in $S$, respectively.
Further, for any atomic $S \subseteq \iz$,
\begin{align*}
S &= \tcal_S \cup \rcal_S = \tcal_S \cup \ncal_S \cup \pcal_S, \\
\langle S \rangle &= \left[\tcal_{\langle S \rangle} \circ \rcal_{\langle S \rangle}\right] = \left[\tcal_{\langle S \rangle} \circ \ncal_{\langle S \rangle} \circ \pcal_{\langle S \rangle}\right].
\end{align*}
Again, the second line in the above will be clear after the following section.

\section{Special Property of Atomically Generated Subgroups}
\label{sec:specialty-of-atomically-generated-subgroups}

We describe a special property of atomically generated subgroups.
As every $k$-ary semidirect-product decomposition is recursively built from binary ones, it suffices to focus on binary semidirect-product decomposition.
We start from groups with a binary semidirect-product decomposition in general, then generalize to $k$-ary decompositions, and finally apply the results to isometries.

Let $G = \left[AB\right]$ be a binary semidirect-product decomposition of a group $G$, then for any $g \in G$, there exists a unique $a \in A$ and a unique $b \in B$ such that $g = ab$.
This uniqueness allows us to define a function $\varphi_A: G \to A$ and a function $\varphi_B: G \to B$, such that for any $g \in G$, $g = \varphi_A(g)\varphi_B(g)$.
The main task in this section is: for any atomic subset $S \subseteq G$, characterize three subgroups namely $\langle S \rangle$, $\acal_{\langle S \rangle}$, $\bcal_{\langle S \rangle}$.
We first present the conclusion as Theorem~\ref{thm:distinguishing-property-atomically-generated-subgroup-2-ary}.

\vspace{0.1in}\noindent
\fbox{\parbox{\textwidth}{
\begin{theorem}[\bf Special Property of Atomically Generated Subgroup: Binary Case]
\label{thm:distinguishing-property-atomically-generated-subgroup-2-ary}
Let $G = \left[AB\right]$, and $S \subseteq G$ be atomic, then
\begin{gather*}
\langle S \rangle = \left[\acal_{\langle S \rangle} \bcal_{\langle S \rangle}\right] \quad \mbox{where} \\
\acal_{\langle S \rangle} := \langle S \rangle \cap A = \langle \acal_S^+ \rangle = \varphi_A(\langle S \rangle), \\
\bcal_{\langle S \rangle} := \langle S \rangle \cap B  = \langle \bcal_S \rangle = \varphi_B(\langle S \rangle).
\end{gather*}
$\acal_S^+ := \acal_S^{B_{\langle S \rangle}} := \{(a')^b\mid a' \in \acal_S, b \in \bcal_{\langle S \rangle}\}$ (where $(a')^b:=ba'b^{-1}$) is called the \emph{augmented generating set}: augmented from $\acal_S$ through conjugation by $\bcal_{\langle S \rangle}$.
Hence, $\acal_{\langle S \rangle}$ is the conjugate closure (or normal closure) of $\acal_S$ under $\bcal_{\langle S \rangle}$.
\end{theorem}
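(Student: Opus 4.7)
The plan is to first exploit the uniqueness of the factorization $g = ab$ in $G = \left[AB\right]$---which follows from $A \cap B = \{e\}$---to define projection maps $\varphi_A, \varphi_B : G \to A, B$. Since $A \trianglelefteq G$ (because $G = AB$ with $B \leq {\rm N}_G(A)$ forces ${\rm N}_G(A) = G$), a direct calculation $\varphi_B(a_1 b_1 a_2 b_2) = \varphi_B(a_1 (b_1 a_2 b_1^{-1}) b_1 b_2) = b_1 b_2$ shows that $\varphi_B$ is a group homomorphism (while $\varphi_A$ generally is not, which will not hurt us). The $\bcal$-side then follows by inspection: $\varphi_B$ vanishes on $\acal_S \subseteq A$ and is the identity on $\bcal_S \subseteq B$, so $\varphi_B(\langle S \rangle) = \langle \varphi_B(S) \rangle = \langle \bcal_S \rangle$; meanwhile any $h \in \langle S \rangle \cap B$ satisfies $\varphi_B(h) = h$, which pinches $\bcal_{\langle S \rangle}$, $\langle \bcal_S \rangle$, and $\varphi_B(\langle S \rangle)$ together.

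The main technical work lies on the $\acal$-side, where I would first establish the set-level factorization $\langle S \rangle = \langle \acal_S^+ \rangle \cdot \bcal_{\langle S \rangle}$. Any $h \in \langle S \rangle$ is a word in $\acal_S^{\pm} \cup \bcal_S^{\pm}$; iterating the identity $ba = (bab^{-1})b$ (valid because $A \trianglelefteq G$) migrates every $A$-letter leftward past every $B$-letter. The hard part---the step I expect to be the main obstacle---is the bookkeeping during this migration: each final $A$-letter is obtained by conjugating an original $a \in \acal_S^{\pm}$ by a product of $B$-letters, and I must check the conjugating element actually lies in $\bcal_{\langle S \rangle} = \langle \bcal_S \rangle$ (not merely in $B$) so that the resulting element indeed lies in $\acal_S^+$ as defined. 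Induction on word length, carrying along the invariant ``all conjugating words are built from $\bcal_S^{\pm} \subseteq \bcal_{\langle S \rangle}$,'' should settle this.

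Finally, I would assemble $\langle S \rangle = \left[\langle \acal_S^+ \rangle \, \bcal_{\langle S \rangle}\right]$ in the sense of Definition~\ref{def:semidirect-product-two-inner} by verifying the three bracket conditions: the set equality is the factorization just shown; $\langle \acal_S^+ \rangle \cap \bcal_{\langle S \rangle} \subseteq A \cap B = \{e\}$ is automatic; and $\bcal_{\langle S \rangle} \leq {\rm N}_G(\langle \acal_S^+ \rangle)$ holds because conjugating a $\bcal_{\langle S \rangle}$-conjugate of an element of $\acal_S$ by another element of $\bcal_{\langle S \rangle}$ is again a $\bcal_{\langle S \rangle}$-conjugate. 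Uniqueness of the semidirect factorization then forces the $A$-component $\langle \acal_S^+ \rangle$ to coincide with both $\langle S \rangle \cap A = \acal_{\langle S \rangle}$ (take an element of $\langle S \rangle \cap A$, factor it, and observe the $B$-factor must be $e$) and with $\varphi_A(\langle S \rangle)$ (read off the factorization), closing the chain of equalities.
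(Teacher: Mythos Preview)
Your proposal is correct and follows essentially the same route as the paper: the paper likewise breaks the theorem into pieces (Theorems~\ref{thm:char-pure-a-in-s-gen-via-generating-set}--\ref{thm:semi-direct-product-for-decomposable-s}), and the word-migration argument you single out as the ``main obstacle'' is precisely the preliminary fact $\varphi_A(\langle S\rangle)\subseteq\langle\acal_S^+\rangle$, $\varphi_B(\langle S\rangle)\subseteq\langle\bcal_S\rangle$ that the paper states without proof just before Theorem~\ref{thm:char-pure-a-in-s-gen-via-generating-set}. Your explicit observation that $\varphi_B$ is a homomorphism and your choice to establish the factorization first and read off the $\acal$-side equalities from uniqueness afterward are minor organizational variations, not a different method.
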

}}
\vspace{0.1in}

\noindent To prove Theorem~\ref{thm:distinguishing-property-atomically-generated-subgroup-2-ary}, we break it down into Theorems~\ref{thm:char-pure-a-in-s-gen-via-generating-set}--\ref{thm:semi-direct-product-for-decomposable-s}.
Further, for Theorems~\ref{thm:char-pure-a-in-s-gen-via-generating-set}--\ref{thm:char-pure-b-in-s-gen-via-phi-b}, we first state a result without proof: for any $g \in \langle S \rangle$, $\varphi_A(g) \in \langle \acal_S^+ \rangle$ and $\varphi_B(g) \in \langle \bcal_S \rangle$; or equivalently, $\varphi_A(\langle S \rangle) \subseteq \langle \acal_S^+ \rangle$ and $\varphi_B(\langle S \rangle) \subseteq \langle \bcal_S \rangle$.


\begin{theorem}\label{thm:char-pure-a-in-s-gen-via-generating-set}
$\acal_{\langle S \rangle} = \langle \acal_S^+ \rangle$.
\end{theorem}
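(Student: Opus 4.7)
The plan is to establish the identity $\acal_{\langle S\rangle} = \langle \acal_S^+\rangle$ by proving the two containments separately, relying for the harder direction on the previously stated (but yet-to-be-proved) fact that $\varphi_A(\langle S\rangle) \subseteq \langle \acal_S^+\rangle$.

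First I would handle the inclusion $\langle \acal_S^+\rangle \le \acal_{\langle S\rangle}$, which is the easy direction. It suffices to show $\acal_S^+ \subseteq \acal_{\langle S\rangle}$, as $\acal_{\langle S\rangle}$ is already a subgroup. Take an arbitrary generator $(a')^b = ba'b^{-1}$ with $a'\in\acal_S$ and $b\in\bcal_{\langle S\rangle}$. Since $a'\in S\subseteq \langle S\rangle$ and $b\in\bcal_{\langle S\rangle}\subseteq\langle S\rangle$, the conjugate $ba'b^{-1}$ lives in $\langle S\rangle$; and since the definition of $[AB]$ demands $B\le {\rm N}_G(A)$, the same conjugate lies in $A$. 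Hence $(a')^b \in \langle S\rangle \cap A = \acal_{\langle S\rangle}$, which closes this direction.

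Next I would address the reverse inclusion $\acal_{\langle S\rangle}\le \langle \acal_S^+\rangle$ by exploiting uniqueness of the semidirect-product factorization. Because $G=AB$ as a set and $A\cap B=\{e\}$, every $g\in G$ admits a unique decomposition $g=\varphi_A(g)\varphi_B(g)$ with $\varphi_A(g)\in A$ and $\varphi_B(g)\in B$. For any $g\in\acal_{\langle S\rangle}$, the element $g$ already lies in $A$, so uniqueness forces $\varphi_A(g)=g$ and $\varphi_B(g)=e$. The assumed containment $\varphi_A(\langle S\rangle)\subseteq\langle \acal_S^+\rangle$ then gives $g=\varphi_A(g)\in\langle \acal_S^+\rangle$, completing the proof.

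The main obstacle is not in this theorem per se but in the preceding unproved assertion $\varphi_A(\langle S\rangle)\subseteq\langle \acal_S^+\rangle$, which is doing all the nontrivial work: it says that in the unique $AB$-normal form of an arbitrary word over $S$, the $A$-component can be assembled from conjugates of $\acal_S$ by elements of $\bcal_{\langle S\rangle}$. I expect that result to be established separately by a rewriting/induction argument on word length that repeatedly uses $B\le{\rm N}_G(A)$ to sweep $B$-letters past $A$-letters via $ba' = (ba'b^{-1})b$, with the bookkeeping ensuring that the conjugating $b$ gathered along the way genuinely belongs to $\bcal_{\langle S\rangle}$ and not merely to $B$. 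Atomicity of $S$ (so each generator is purely in $A$ or purely in $B$) is what makes such a rewriting feasible. Given that auxiliary fact, the two containments above finish the argument cleanly.
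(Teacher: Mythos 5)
Your proof is correct and follows essentially the same route as the paper: the easy containment $\langle \acal_S^+\rangle \subseteq \acal_{\langle S\rangle}$ by checking each conjugate generator lands in $\langle S\rangle \cap A$, and the reverse containment by noting $g = \varphi_A(g)$ for $g \in A$ and invoking the pre-stated (unproved) fact $\varphi_A(\langle S\rangle) \subseteq \langle \acal_S^+\rangle$, exactly as the paper does. Your closing remarks correctly identify that all the nontrivial content sits in that auxiliary fact, which the paper likewise states without proof.
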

\begin{proof}
For any $g \in \acal_{\langle S \rangle} = \langle S \rangle \cap A$, $g = \varphi_A(g) \in \langle \acal_S^+ \rangle$.
Thus, $\acal_{\langle S \rangle} \subseteq \langle \acal_S^{+} \rangle$.
Conversely, for any $g \in \langle \acal_S^+ \rangle$,
$g = (a'_k)^{b_k} \cdots (a'_1)^{b_1}$
for some $b_k, \ldots, b_1 \in \langle \bcal_S \rangle$, $a'_k, \ldots, a'_1 \in \acal_S$.
For any $j \in \{1, \ldots, k\}$, clearly, $(a'_j)^{b_j} \in \langle S \rangle$; further, $(a'_j)^{b_j} \in A$ since $A \trianglelefteq G$.
This implies that $g \in \langle S \rangle \cap A = \acal_{\langle S \rangle}$.
Thus, $\langle \acal_S^+ \rangle \subseteq \acal_{\langle S \rangle}$.
\end{proof}

\begin{theorem}\label{thm:char-pure-a-in-s-gen-via-phi-a}
$\acal_{\langle S \rangle}= \varphi_A(\langle S \rangle)$.
\end{theorem}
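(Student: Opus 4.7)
The plan is to prove the equality $\acal_{\langle S \rangle} = \varphi_A(\langle S \rangle)$ by establishing two inclusions. Both directions are short and rely on the uniqueness of the $AB$-factorization in $G = [AB]$ together with what has already been established.

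For the inclusion $\acal_{\langle S \rangle} \subseteq \varphi_A(\langle S \rangle)$, I would invoke uniqueness of the semidirect-product decomposition directly. If $g \in \acal_{\langle S \rangle} = \langle S \rangle \cap A$, then $g \in A$, so writing $g = g \cdot e$ is the unique factorization with $\varphi_A(g) = g$ and $\varphi_B(g) = e$. Since $g$ is already assumed to lie in $\langle S \rangle$, we get $g = \varphi_A(g) \in \varphi_A(\langle S \rangle)$ immediately.

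For the reverse inclusion $\varphi_A(\langle S \rangle) \subseteq \acal_{\langle S \rangle}$, I would chain the previously stated-without-proof fact $\varphi_A(\langle S \rangle) \subseteq \langle \acal_S^+ \rangle$ with Theorem~\ref{thm:char-pure-a-in-s-gen-via-generating-set}, which has already identified $\langle \acal_S^+ \rangle$ with $\acal_{\langle S \rangle}$. Composing these gives $\varphi_A(\langle S \rangle) \subseteq \langle \acal_S^+ \rangle = \acal_{\langle S \rangle}$, closing the argument.

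There is no real obstacle internal to this theorem: the genuine work has been front-loaded into the deferred lemma asserting that $\varphi_A$ sends $\langle S \rangle$ into $\langle \acal_S^+ \rangle$, i.e.\ that conjugating the $A$-atomic generators by elements of $\bcal_{\langle S \rangle}$ suffices to capture every $A$-component appearing in a word over $S$. Once that is granted, Theorem~\ref{thm:char-pure-a-in-s-gen-via-phi-a} is essentially a one-line corollary of Theorem~\ref{thm:char-pure-a-in-s-gen-via-generating-set} together with the uniqueness of the semidirect-product factorization, and I do not expect to need any further machinery.
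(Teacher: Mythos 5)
Your proposal is correct and follows essentially the same route as the paper: the forward inclusion uses $\varphi_A(g)=g$ for $g\in\langle S\rangle\cap A$ (from uniqueness of the $AB$-factorization), and the reverse inclusion chains the deferred fact $\varphi_A(\langle S\rangle)\subseteq\langle\acal_S^+\rangle$ with Theorem~\ref{thm:char-pure-a-in-s-gen-via-generating-set}. No gaps beyond the deferred lemma, which the paper likewise assumes without proof at this point.
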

\begin{proof}
For any $g \in \acal_{\langle S \rangle} = \langle S \rangle \cap A$, $g = \varphi_A(g) \in \varphi_A(\langle S \rangle)$.
So, $\acal_{\langle S \rangle} \subseteq \varphi_A(\langle S \rangle)$.
Conversely, $\varphi_A(\langle S \rangle) \subseteq \langle \acal_S^+\rangle = \acal_{\langle S \rangle}$ (just proved in Theorem~\ref{thm:char-pure-a-in-s-gen-via-generating-set}).
\end{proof}


\begin{theorem}\label{thm:char-pure-b-in-s-gen-via-generating-set}
$\bcal_{\langle S \rangle} = \langle \bcal_S \rangle$.
\end{theorem}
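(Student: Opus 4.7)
The plan is to mimic the two-step strategy used in Theorems~\ref{thm:char-pure-a-in-s-gen-via-generating-set} and \ref{thm:char-pure-a-in-s-gen-via-phi-a}, now with the roles of $A$ and $B$ interchanged, and to rely on the pre-stated black-box inclusion $\varphi_B(\langle S \rangle) \subseteq \langle \bcal_S \rangle$. The statement then splits as two set containments, each of which is essentially a one-liner.

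For the forward inclusion $\langle \bcal_S \rangle \subseteq \bcal_{\langle S \rangle}$, I would argue directly from the definitions: each element of $\bcal_S = S \cap B$ lies in $\langle S \rangle$ trivially and lies in $B$ by atomicity of $S$, hence $\bcal_S \subseteq \langle S \rangle \cap B = \bcal_{\langle S \rangle}$; passing to the generated subgroup on the left, and using the fact that $\bcal_{\langle S \rangle}$ is itself a subgroup of $G$, yields the desired containment. This step uses nothing beyond Definitions~\ref{def:atomic-set} and the shorthand~\eqref{eqn:shorthand-notation-for-semidirect-product-components}.

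For the reverse inclusion $\bcal_{\langle S \rangle} \subseteq \langle \bcal_S \rangle$, take any $g \in \langle S \rangle \cap B$. From the uniqueness of the semidirect-product decomposition, $g = \varphi_A(g)\varphi_B(g)$, together with $g \in B$ (so that $g = e \cdot g$ is a valid $A$-then-$B$ decomposition), I would conclude $\varphi_A(g) = e$ and $\varphi_B(g) = g$. Invoking the pre-stated fact $\varphi_B(\langle S \rangle) \subseteq \langle \bcal_S \rangle$ then gives $g = \varphi_B(g) \in \langle \bcal_S \rangle$, closing the argument.

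There is no serious obstacle once the $\varphi_B$-inclusion is granted; all the substantive content is packed inside that assumed fact, and it is worth flagging where it comes from, as this also explains an interesting asymmetry with Theorem~\ref{thm:char-pure-a-in-s-gen-via-generating-set}: the statement features no augmented generating set ``$\bcal_S^+$''. Conceptually, any word over $S \cup S^{-1}$ can be normalized by sliding every $A$-letter leftward past every $B$-letter using $B \leq {\rm N}_G(A)$; this maneuver conjugates the $A$-letters (forcing the normal closure $\acal_S^+$ on the $A$-side) but leaves the $B$-letters themselves untouched, so plain $\bcal_S$ suffices on the $B$-side. An explicit proof of the $\varphi_B$-inclusion would carry out exactly this normalization, by induction on word length in $S \cup S^{-1}$.
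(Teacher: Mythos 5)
Your proposal is correct and follows essentially the same route as the paper: the containment $\langle \bcal_S \rangle \subseteq \langle S \rangle \cap B$ from the trivial inclusions $\bcal_S \subseteq \langle S \rangle$ and $\bcal_S \subseteq B$, and the reverse containment by writing $g = \varphi_B(g)$ for $g \in \langle S \rangle \cap B$ and invoking the pre-stated fact $\varphi_B(\langle S \rangle) \subseteq \langle \bcal_S \rangle$. (Your closing remark correctly identifies why no augmented set $\bcal_S^+$ appears, but note that $\bcal_S = S\cap B \subseteq B$ holds by definition of intersection, not by atomicity.)
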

\begin{proof}
For any $g \in \bcal_{\langle S \rangle} = \langle S \rangle \cap B$, $g = \varphi_B(g) \in \langle \bcal_S \rangle$.
Thus, $\bcal_{\langle S \rangle} \subseteq \langle \bcal_S \rangle$.
Conversely, $\langle \bcal_S \rangle \leq \langle S \rangle$ and $\langle \bcal_S \rangle \leq \langle B \rangle = B$, thus, $\langle \bcal_S \rangle \subseteq \langle S \rangle \cap B = \bcal_{\langle S \rangle}$.
\end{proof}

\begin{theorem}\label{thm:char-pure-b-in-s-gen-via-phi-b}
$\bcal_{\langle S \rangle} = \varphi_B(\langle S \rangle)$.
\end{theorem}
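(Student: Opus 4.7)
The plan is to mirror the proof of Theorem~\ref{thm:char-pure-a-in-s-gen-via-phi-a} essentially verbatim, with the roles of $A$ and $B$ swapped and with $\acal_S^+$ replaced by $\bcal_S$ (no augmentation/conjugation is needed on the $B$-side, since $B$ plays the role of the complement, not the normal factor, in the semidirect product). The two inclusions $\bcal_{\langle S \rangle} \subseteq \varphi_B(\langle S \rangle)$ and $\varphi_B(\langle S \rangle) \subseteq \bcal_{\langle S \rangle}$ should fall out immediately from results already in hand.

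For the first inclusion, I would observe that any $g \in \bcal_{\langle S \rangle} = \langle S \rangle \cap B$ satisfies $g = e \cdot g$ with $e \in A$ and $g \in B$, so by the uniqueness of the $A$-$B$ factorization of elements of $G = [AB]$ we have $\varphi_B(g) = g$. Hence $g \in \varphi_B(\langle S \rangle)$, giving $\bcal_{\langle S \rangle} \subseteq \varphi_B(\langle S \rangle)$.

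For the reverse inclusion, I would invoke the statement listed without proof just before Theorem~\ref{thm:char-pure-a-in-s-gen-via-generating-set}, namely $\varphi_B(\langle S \rangle) \subseteq \langle \bcal_S \rangle$, and then combine it with the already-established equality $\langle \bcal_S \rangle = \bcal_{\langle S \rangle}$ from Theorem~\ref{thm:char-pure-b-in-s-gen-via-generating-set}, yielding $\varphi_B(\langle S \rangle) \subseteq \bcal_{\langle S \rangle}$.

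No real obstacle is expected here, provided one is comfortable citing the unproved inclusion $\varphi_B(\langle S \rangle) \subseteq \langle \bcal_S \rangle$ as the authors did for the $A$-analogue when proving Theorem~\ref{thm:char-pure-a-in-s-gen-via-phi-a}. If one wanted to avoid that citation, one could give an elementary induction on word length in $S$: the base case $g = e$ is immediate since $\varphi_B(e) = e$; for the inductive step, write $g' = gs$ with $s \in S = \acal_S \cup \bcal_S$, use $g = \varphi_A(g)\varphi_B(g)$, and consider the two cases $s \in \acal_S$ versus $s \in \bcal_S$. In the first case one uses $A \trianglelefteq G$ to push $s$ past $\varphi_B(g)$, yielding an expression of the form $a'' \cdot \varphi_B(g)$ with $a'' \in A$, so that $\varphi_B(g') = \varphi_B(g) \in \langle \bcal_S \rangle$; in the second case, $\varphi_B(g') = \varphi_B(g) \cdot s \in \langle \bcal_S \rangle$ directly. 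This self-contained route would be the only mildly technical part of the argument.
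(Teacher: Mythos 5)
Your proposal matches the paper's own proof essentially verbatim: the forward inclusion via $\varphi_B(g)=g$ for $g\in\langle S\rangle\cap B$, and the reverse inclusion by citing the unproved fact $\varphi_B(\langle S\rangle)\subseteq\langle\bcal_S\rangle$ together with Theorem~\ref{thm:char-pure-b-in-s-gen-via-generating-set}. The optional induction on word length you sketch is a sound way to make the cited inclusion self-contained, but it is not needed to reproduce the paper's argument.
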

\begin{proof}
For any $g \in \bcal_{\langle S \rangle} = \langle S \rangle \cap B$, $g = \varphi_B(g) \in \varphi_B(\langle S \rangle)$.
So, $\bcal_{\langle S \rangle} \subseteq \varphi_B(\langle S \rangle)$.
Conversely, $\varphi_B(\langle S \rangle) \subseteq \langle \bcal_S \rangle = \bcal_{\langle S \rangle}$ (just proved in Theorem~\ref{thm:char-pure-b-in-s-gen-via-generating-set}).
\end{proof}



\begin{theorem}\label{thm:semi-direct-product-for-decomposable-s}
For any atomically generated subgroup $H \leq G$, $H = \left[\acal_{H} \bcal_{H}\right]$.
\end{theorem}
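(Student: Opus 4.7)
The plan is to verify directly the three defining conditions of a binary semidirect product (Definition~3.2) for the pair $(\mathcal{A}_H, \mathcal{B}_H)$ inside $H$, harvesting each condition from work already done. Since $H$ is atomically generated, fix an atomic $S \subseteq G$ with $H = \langle S \rangle$, so that the characterizations of Theorems~5.1--5.4 apply verbatim.

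First I would handle the set equality $H = \mathcal{A}_H \mathcal{B}_H$. The inclusion $\mathcal{A}_H \mathcal{B}_H \subseteq H$ is immediate since both factors lie in $H$ by definition. For the reverse inclusion, pick any $g \in H$ and use the unique factorization $g = \varphi_A(g)\varphi_B(g)$ coming from $G = [AB]$; then Theorems~5.2 and~5.4 give $\varphi_A(g) \in \varphi_A(H) = \mathcal{A}_H$ and $\varphi_B(g) \in \varphi_B(H) = \mathcal{B}_H$, so $g \in \mathcal{A}_H \mathcal{B}_H$. This is the step where atomicity is really being spent, since without it we would have no guarantee that the $A$-part and the $B$-part of an element of $H$ individually lie in $H$.

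Next I would verify the normalizer condition $\mathcal{B}_H \leq N_H(\mathcal{A}_H)$. From $G = [AB]$ we have $B \leq N_G(A)$, and combined with $G = AB$ this yields $A \trianglelefteq G$. For any $b \in \mathcal{B}_H \subseteq B$, conjugation sends $\mathcal{A}_H = H \cap A$ into $bHb^{-1} \cap bAb^{-1} = H \cap A = \mathcal{A}_H$, using that $b \in H$ and $A$ is $G$-normal. Finally, the trivial-intersection condition $\mathcal{A}_H \cap \mathcal{B}_H = \{e\}$ is inherited for free from $A \cap B = \{e\}$, since $\mathcal{A}_H \cap \mathcal{B}_H \subseteq A \cap B$.

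I do not anticipate a serious obstacle: all the real content has been pushed into Theorems~5.1--5.4, and what remains is just checking the three bracket axioms. The only point that deserves a sentence of care is the deduction $A \trianglelefteq G$ from the bracket hypothesis, which is needed to conjugate $\mathcal{A}_H$ safely by elements of $\mathcal{B}_H$; everything else is bookkeeping.
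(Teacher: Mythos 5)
Your proof is correct and follows essentially the same route as the paper: both establish $H = \acal_H\bcal_H$ by writing $g = \varphi_A(g)\varphi_B(g)$ and invoking $\varphi_A(H) = \acal_H$, $\varphi_B(H) = \bcal_H$ (the paper's Theorems~\ref{thm:char-pure-a-in-s-gen-via-phi-a} and \ref{thm:char-pure-b-in-s-gen-via-phi-b}), and both get the trivial intersection from $A \cap B = \{e\}$. The only difference is that you spell out the normalizer/conjugation check that the paper dismisses as ``clear,'' and you correctly identify the reverse inclusion $H \subseteq \acal_H\bcal_H$ as the place where atomicity is actually used.
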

\begin{proof}
It is clear that $\acal_{H}, \bcal_{H} \leq H$ and $\acal_{H} \cap \bcal_{H} = \{e\}$, since by definition $\acal_{H} = H \cap A$, $\bcal_{H} = H \cap B$, and $A \cap B = \{e\}$;
further, it is clear that $\acal_{H} \trianglelefteq H$.
It remains to be shown that $H = \acal_{H} \bcal_{H}$.
Since $\acal_{H}, \bcal_{H} \leq H$, $\acal_{H} \bcal_{H} \subseteq H$.
Conversely, for any $g \in H \leq G = \left[AB\right]$, $g = \varphi_A(g)\varphi_B(g) \in \varphi_A(H)\varphi_B(H) = \acal_{H} \bcal_{H}$; thus, $H \subseteq \acal_{H} \bcal_{H}$.
Then, by Definition~\ref{def:semidirect-product-two-inner}, $H = \left[\acal_{H} \bcal_{H}\right]$.
\end{proof}
\begin{remark}
It is important that the subgroup $H$ in Theorem~\ref{thm:semi-direct-product-for-decomposable-s} is atomically generated, which guarantees that $\varphi_B(H)$ is also a subgroup of $H$ (Theorem~\ref{thm:char-pure-b-in-s-gen-via-phi-b}).
This is \emph{not} true for any $H \leq G$: for some $H \leq G$, $\varphi_B(H)$ is not even a subset of $H$.
For example, let $\mathsf{AFF}(\reals)$ denote the group of (invertible) affine transformations of $\reals$, $\tran(\reals)$ denote the group of translations of $\reals$, and $\mathsf{L}(\reals)$ denote the group of (invertible) linear transformations of $\reals$; further, for any $a,b \in \reals$, let $f_{(a,b)}: \reals \to \reals$ denote the affine transformation defined by $f_{(a,b)}(x) := ax + b$. Consider $G = \mathsf{AFF}(\reals) = \left[ {\rm T}(\reals) \circ {\rm L}(\reals) \right]$ and $H = \langle f_{(2,1)} \rangle = \{f_{(2^n, 2^n-1)}\mid n \in \integers\}$.
In this case, $B = {\rm L}(\reals)$ and $\varphi_B(H) = \{f_{(2^n,0)}\mid n \in \integers\} \not\subseteq H$.
\end{remark}

So far, we have proved Theorem~\ref{thm:distinguishing-property-atomically-generated-subgroup-2-ary}.
We can further generalize it to any $k$-ary semidirect-product decomposition which is stated as Theorem~\ref{thm:distinguishing-property-atomically-generated-subgroup-k-ary}.
The proof is done by induction and is relegated to Appendix~\ref{app:distinguishing-property-atomically-generated-subgroup-k-ary}.

\vspace{0.1in}\noindent
\fbox{\parbox{\textwidth}{
\begin{theorem}[\bf Special Property of Atomically Generated Subgroup: General Case]
\label{thm:distinguishing-property-atomically-generated-subgroup-k-ary}
Let $G = \ginner$, and $S\subseteq G$ be atomic.
Then, $\langle S \rangle$ has a similar semidirect-product decomposition:
\begin{gather}
\label{eqn:semidirect-product-decomp-for-atomically-generated-subgroup-k-ary}
\langle S \rangle = \sginner{\langle S \rangle}, \quad \mbox{ where } \\
\label{eqn:component-for-atomically-generated-subgroup-k-ary}
(\acal_j)_{\langle S \rangle} := \langle S \rangle \cap A_j = \langle (\acal_j)_S^+ \rangle = \varphi_{A_j}(\langle S \rangle) \quad \mbox{ for any } j \in \{1, \ldots, k\}.
\end{gather}
In Equation~\eqref{eqn:component-for-atomically-generated-subgroup-k-ary}, the augmented generating set is consistently defined as follows:
\begin{align}
\label{eqn:gen-set-plus-notation}
(\acal_j)_S^+ := ((\acal_j)_S)^{(\acal_{j-1})_{\langle S \rangle} \cdots (\acal_1)_{\langle S \rangle}}.
\end{align}
In particular, $(\acal_1)_S^+ = ((\acal_1)_S)^{\{e\}} = (\acal_1)_S$.
\end{theorem}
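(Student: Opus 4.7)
The plan is to prove Theorem~\ref{thm:distinguishing-property-atomically-generated-subgroup-k-ary} by induction on $k$, using Theorem~\ref{thm:distinguishing-property-atomically-generated-subgroup-2-ary} as both the base case ($k=2$) and the workhorse for each inductive step (the case $k=1$ is tautological). For the inductive step, assuming the result at level $k-1$, I would re-bracket the $k$-ary decomposition as a binary one, $G = \left[A_k \, B\right]$ with $B := \left[A_{k-1}\cdots A_1\right]$, which is exactly the recursive definition of the $k$-ary semidirect product. First I would verify that $S$ remains atomic for this binary splitting, since $S \subseteq A_k \cup A_{k-1} \cup \cdots \cup A_1 \subseteq A_k \cup B$, and that its $B$-component $S \cap B$ coincides with $(\acal_{k-1})_S \cup \cdots \cup (\acal_1)_S$, which is in turn atomic in $B$ with respect to its $(k-1)$-ary decomposition.

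Applying Theorem~\ref{thm:distinguishing-property-atomically-generated-subgroup-2-ary} to this binary splitting yields $\langle S \rangle = \left[(\acal_k)_{\langle S \rangle} \, \mathcal{B}_{\langle S \rangle}\right]$ with $\mathcal{B}_{\langle S \rangle} = \langle S \cap B \rangle$. The key move is then to apply the inductive hypothesis to $\mathcal{B}_{\langle S \rangle}$ viewed as an atomically generated subgroup of $B$, producing the $(k-1)$-ary decomposition $\mathcal{B}_{\langle S \rangle} = \left[(\acal_{k-1})_{\mathcal{B}_{\langle S \rangle}} \cdots (\acal_1)_{\mathcal{B}_{\langle S \rangle}}\right]$ together with the three-way characterization of each component. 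Substituting into the binary decomposition of $\langle S \rangle$ and unfolding brackets gives Equation~\eqref{eqn:semidirect-product-decomp-for-atomically-generated-subgroup-k-ary}, provided I can identify $(\acal_j)_{\mathcal{B}_{\langle S \rangle}}$ with $(\acal_j)_{\langle S \rangle}$ for $j < k$.

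That identification is the glue of the step and rests on a short consistency lemma: since $A_j \leq B$ for $j < k$, we have $\langle S \rangle \cap A_j = (\langle S \rangle \cap B) \cap A_j = \mathcal{B}_{\langle S \rangle} \cap A_j$. From here, the three forms in Equation~\eqref{eqn:component-for-atomically-generated-subgroup-k-ary} fall out: the intersection form is the lemma itself; the generating-set form transfers from the inductive hypothesis, using that $(\acal_j)_{S \cap B} = (\acal_j)_S$ so the augmented generators defined internally in $B$ coincide with those in Equation~\eqref{eqn:gen-set-plus-notation}; and the projection form follows from the factorization $\varphi_{A_j} = \varphi_{A_j}^B \circ \varphi_B$, where $\varphi_B$ is the binary projection (with image $\mathcal{B}_{\langle S \rangle}$ by Theorem~\ref{thm:distinguishing-property-atomically-generated-subgroup-2-ary}) and $\varphi_{A_j}^B$ is the projection internal to $B$.

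The main obstacle I anticipate is precisely this last piece of bookkeeping: I must verify that the conjugators $(\acal_{j-1})_{\mathcal{B}_{\langle S \rangle}} \cdots (\acal_1)_{\mathcal{B}_{\langle S \rangle}}$ used when invoking the inductive hypothesis inside $B$ really do equal the conjugators $(\acal_{j-1})_{\langle S \rangle} \cdots (\acal_1)_{\langle S \rangle}$ appearing in Equation~\eqref{eqn:gen-set-plus-notation}, and similarly that the two chains of projections agree. Both ultimately reduce to applying the consistency lemma index-by-index, but the recursion has to be unwound in the correct order. The remaining algebraic obligations (normality of $(\acal_k)_{\langle S \rangle}$ in $\langle S \rangle$, triviality of the relevant pairwise intersections, and so on) are already packaged inside Theorem~\ref{thm:distinguishing-property-atomically-generated-subgroup-2-ary} and require no further work.
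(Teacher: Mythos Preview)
Your proposal is correct and follows essentially the same route as the paper's own proof: induction on $k$, re-bracketing $G = [A_k\,B]$ with $B = [A_{k-1}\cdots A_1]$, applying the binary theorem at the top level, invoking the inductive hypothesis on $\mathcal{B}_{\langle S\rangle} = \langle S\cap B\rangle$ inside $B$, and then using the consistency identity $\langle S\rangle\cap A_j = \mathcal{B}_{\langle S\rangle}\cap A_j$ (together with $(\acal_j)_{S\cap B} = (\acal_j)_S$) to transport all three characterizations in Equation~\eqref{eqn:component-for-atomically-generated-subgroup-k-ary} from $B$ back up to $G$. The bookkeeping obstacle you flag is exactly the one the paper handles, and it resolves in the same way.
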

}}

\vspace{0.1in}
\paragraph{Special Property of Atomically Generated Subgroup in the Case of Isometries}~

Consider our main mathematical object: $\iz = \left[ \tz \circ \rz \right]$ where $\rz = \left[ \nz \circ \pz \right]$, or collectively, $\iz =  \left[ \tz \circ \nz \circ \pz \right]$.
For any $h \in \iz$, $h = t_v \circ r_R = t_v \circ r_N \circ r_P$ for some unique $t_v \in \tz$, $r_R \in \rz$, $r_N \in \nz$, and $r_P \in \pz$.
The uniqueness allows us to define $\varphi_{\tran}: \iz \to \tz$, $\varphi_{\rota}: \iz \to \rz$, $\varphi_{\nega}: \iz \to \nz$, and $\varphi_{\perm}: \iz \to \pz$, such that $h = \varphi_{\tran}(h) \circ \varphi_{\rota}(h) = \varphi_{\tran}(h) \circ \varphi_{\nega}(h) \circ \varphi_{\perm}(h)$.

We apply Theorem~\ref{thm:distinguishing-property-atomically-generated-subgroup-k-ary} to the above semidirect-product decompositions of $\iz$ and its subgroups.
For any atomic subset $S \subseteq \iz$,
\begin{align}
\label{eqn:atomically-generated-subgroup-isom-zn-binary-semidirect-product-decomp}
\langle S \rangle &= \left[ \tcal_{\langle S \rangle} \circ \rcal_{\langle S \rangle} \right], \\
\label{eqn:atomically-generated-subgroup-rota-zn-binary-semidirect-product-decomp}
\rcal_{\langle S \rangle} &= \left[ \ncal_{\langle S \rangle} \circ \pcal_{\langle S \rangle} \right], \quad \mbox{ or collectively, } \\
\label{eqn:atomically-generated-subgroup-isom-zn-ternary-semidirect-product-decomp}
\langle S \rangle &= \left[ \tcal_{\langle S \rangle} \circ \ncal_{\langle S \rangle} \circ \pcal_{\langle S \rangle} \right]; \\
\label{eqn:atomically-generated-subgroup-tran-zn-char}
\tcal_{\langle S \rangle} &= \langle \tcal_S^+ \rangle = \varphi_{\tran}(\langle S \rangle), \\
\label{eqn:atomically-generated-subgroup-rota-zn-char}
\rcal_{\langle S \rangle} &= \langle \rcal_S \rangle = \varphi_{\rota}(\langle S \rangle), \\
\label{eqn:atomically-generated-subgroup-nega-zn-char}
\ncal_{\langle S \rangle} &= \langle \ncal_{S}^+ \rangle = \varphi_{\nega}(\langle S \rangle), \\
\label{eqn:atomically-generated-subgroup-perm-zn-char}
\pcal_{\langle S \rangle} &= \langle \pcal_{S} \rangle = \varphi_{\perm}(\langle S \rangle).
\end{align}
In Equation~\eqref{eqn:atomically-generated-subgroup-tran-zn-char}, $\tcal_S^+ := (\tcal_S)^{\rcal_{\langle S \rangle}}$; in Equation~\eqref{eqn:atomically-generated-subgroup-nega-zn-char}, $\ncal_S^+ := (\ncal_S)^{\pcal_{\langle S \rangle}}$.

\section{The Problem of Orbit Computation}
\label{sec:the-problem-of-orbit-computation}

Having introduced our main mathematical object $\iz$ and its atomically generated subgroups, we now formally introduce our orbit computation problem.

Under the ambient group action $\iz \curvearrowright \ambsp$ (naturally defined by $h\cdot x := h(x)$ for any $h \in \iz$ and any $x \in \ambsp$),
we consider the induced subgroup group action $H \curvearrowright \ambsp$ for any given $H \leq \iz$, and want to compute the set of orbits $\ambsp/H := \{\hspace{0.01in}H\cdot x \mid x \in \ambsp\hspace{0.01in}\}.$
This problem is only mathematically but not computationally well-defined, since the subgroup $H$ can be infinite and the desired output $\ambsp/H$ may comprise a possibly infinite number of orbits of possibly infinite size.
To make the problem practical, we must clarify: how we represent $H$ if it is infinite; how we represent $\ambsp/H$, \ie a partition of an infinite space.
For the former, we focus on subgroups represented by a finite generating set (in fact, one can check that all subgroups of $\iz$ are finitely generated);
for the latter, by computing a partition of an infinite space we mean being able to compute this partition restricted to \emph{any} finite subset $Z \subseteq \ambsp$.
This yields the following computationally well-defined problem:
\begin{equation}
\label{eqn:orbit-computation-practical}
\begin{aligned}
\mbox{\textbf{Inputs:}} \quad & \mbox{1) any finite } S \subseteq \iz; \\
& \mbox{2) any finite } Z \subseteq \ambsp. \\
\mbox{\textbf{Output:}} \quad & Z/H := (\ambsp/H)|_Z := \{\hspace{0.03in}(H\cdot x) \cap Z \mid x \in Z\hspace{0.03in}\}.
\end{aligned}
\end{equation}

Aiming for a high efficiency that can be achieved via a full exploitation of semidirect products, in this paper, we focus on subgroups with a finite and atomic generating set, since all these subgroups preserve the semidirect-product structure from $\iz$ (Section~\ref{sec:specialty-of-atomically-generated-subgroups}).
This finally yields Problem~\eqref{eqn:orbit-computation-practical-special} as follows.

\vspace{0.1in}\noindent
\fbox{\parbox{\textwidth}{
\vspace{0.1in}\centerline{\bf Our Restricted Orbit Computation Problem}\vspace{-0.1in}
\begin{equation}
\label{eqn:orbit-computation-practical-special}
\begin{aligned}
\mbox{\textbf{Inputs:}} \quad & \mbox{1) any finite and atomic } S \subseteq \iz; \\
& \mbox{2) any finite } Z \subseteq \ambsp. \\
\mbox{\textbf{Output:}} \quad & Z/\langle S \rangle := \{\hspace{0.03in}(\langle S \rangle\cdot x) \cap Z \mid x \in Z\hspace{0.03in}\}.
\end{aligned}
\end{equation}
}}

\vspace{0.2in}
The desired output $Z/\langle S \rangle$ is a partition of the finite subset $Z$ which we represent explicitly via a labeling function $\lambda$.
A desired labeling function assigns every $x \in Z$ a label $\lambda(x)$ that indicates the orbit $(\langle S \rangle \cdot x)\cap Z$ (a set).
As part of a design choice, there is much freedom in picking orbit labels, either numerical or categorical, to replace the original set representation.

As a result, solving the orbit computation problem boils down to designing an algorithm that implements such a labeling function.
In this paper, we always use a point $\omega \in \ambsp$ to label an orbit of points.
More details on orbit labeling will be discussed in the next section, which completes the computational formulation of our restricted orbit computation problem before proceeding to the algorithm.

\section{Orbit Labeling}
\label{sec:orbit-labeling}

First in a most general sense, we introduce the notion of an \emph{orbit-labeling map} and its connection to the \emph{orbit-quotient map}; then we introduce the notion of an \emph{orbit-representative map}---a special type of orbit-labeling map.
In this section, we let $G$ be a group, and $X$ be a set whose elements are called \emph{points}.

\begin{definition}\label{def:orbit-labeling-map}
Let $G\curvearrowright X$ be a $G$-action on $X$, and $Z \subseteq X$ be a subset.
We call a function $\lambda: Z \to L$ an \emph{orbit-labeling map of $G \curvearrowright X$ on $Z$} if it satisfies the following \emph{orbit-labeling property} (on $Z$):
\begin{align}\label{eqn:orbit-labeling-property}
\lambda(x) = \lambda(x') \iff G\cdot x = G\cdot x' \quad \mbox{ for any } x, x' \in Z.
\end{align}
\end{definition}
\begin{remark}
In words, an orbit-labeling map assigns every point in $Z$ a label in $L$, and two points get the same label if and only if they are in the same orbit.
Therefore, collecting the (non-empty) preimages of the labels precisely recovers the set of the orbits in the scope of $Z$, \ie $Z/G = \{\hspace{0.01in}(G\cdot x) \cap Z \mid x \in Z\hspace{0.01in}\}$.
It is clear that a orbit-labeling map is essentially the orbit-quotient map (Theorem~\ref{thm:orbit-labeling-map}).
\end{remark}

\begin{theorem}\label{thm:orbit-labeling-map}
Let $G\curvearrowright X$ be a $G$-action on $X$, and $q: X \to X/G$ be the \emph{orbit-quotient map} \ie $q(x) := G\cdot x$.
Further, for any $Z \subseteq X$, we introduce the \emph{strong restriction of $q$ on $Z$} to be the surjective function $q||_Z: Z \to q(Z)$ defined by $q||_Z(x) := q(x)$.
Then, a function $\lambda: Z \to L$ is an orbit-labeling map of $G\curvearrowright X$ on $Z$ if and only if there exists an injective function $\overline{\lambda}: q(Z) \to L$ such that the diagram in Figure~\ref{fig:orbit-labeling-map} is commutative: $\lambda = \overline{\lambda} \circ {q}||_Z$.
\begin{figure}[h!]
\begin{center}
\includegraphics[width=0.25\columnwidth]{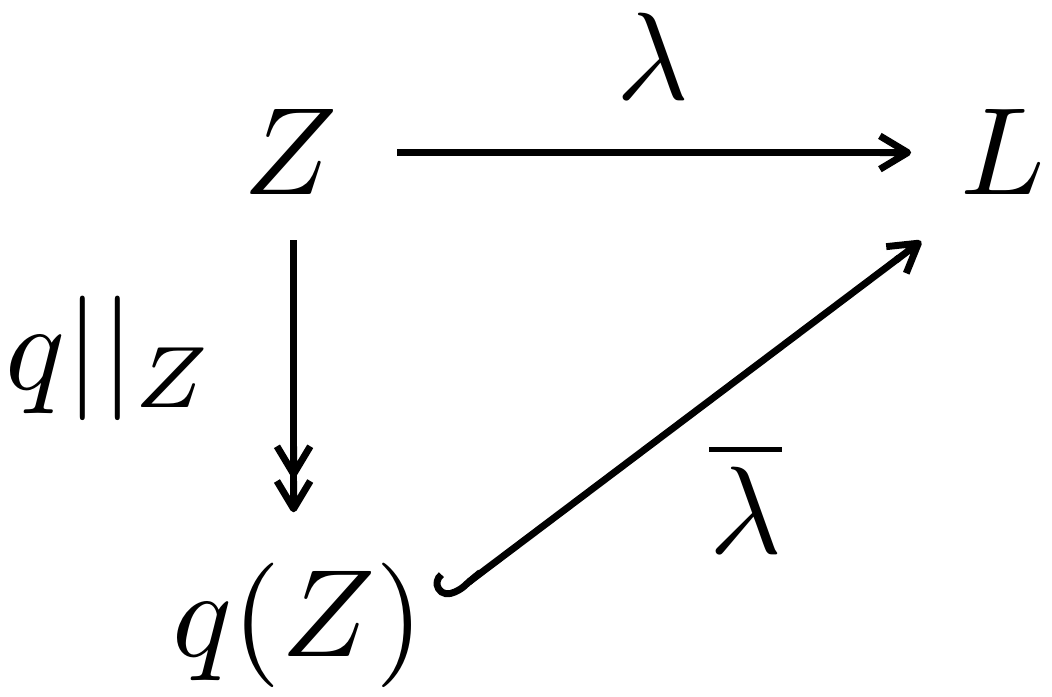}
\end{center}
\caption{A commutative diagram illustrating the connection between an orbit-labeling map ($\lambda$) and the orbit-quotient map ($q$): $\lambda = \overline{\lambda} \circ q||_Z$.}
\label{fig:orbit-labeling-map}
\vspace{-0.0005in}
\end{figure}
\end{theorem}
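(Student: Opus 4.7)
}
The plan is to prove the two directions separately, treating the claim as essentially the universal property of the orbit-quotient map restricted to the subset $Z$. The key observation that organizes everything is that $q\|_Z(x) = q\|_Z(x')$ is by definition equivalent to $G\cdot x = G\cdot x'$, so the orbit-labeling property~\eqref{eqn:orbit-labeling-property} can be rephrased as: $\lambda(x) = \lambda(x') \iff q\|_Z(x) = q\|_Z(x')$ for all $x, x' \in Z$. From this reformulation, both directions become transparent.

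For the $(\Leftarrow)$ direction, I would suppose that $\lambda = \overline{\lambda}\circ q\|_Z$ for some injective $\overline{\lambda}: q(Z)\to L$, and then chase the equivalences: for any $x, x' \in Z$,
\begin{align*}
\lambda(x) = \lambda(x')
&\iff \overline{\lambda}(q\|_Z(x)) = \overline{\lambda}(q\|_Z(x')) \\
&\iff q\|_Z(x) = q\|_Z(x') \\
&\iff G\cdot x = G\cdot x',
\end{align*}
where the middle equivalence uses injectivity of $\overline{\lambda}$. This verifies the orbit-labeling property on $Z$.

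For the $(\Rightarrow)$ direction, I would construct $\overline{\lambda}$ explicitly by setting $\overline{\lambda}(G\cdot x) := \lambda(x)$ for each $x \in Z$ (every element of $q(Z)$ has such a representative by surjectivity of $q\|_Z$). The main bookkeeping step is well-definedness: if two points $x, x' \in Z$ satisfy $G\cdot x = G\cdot x'$, the orbit-labeling property on $\lambda$ forces $\lambda(x) = \lambda(x')$, so the value of $\overline{\lambda}$ does not depend on the chosen representative. Commutativity $\lambda = \overline{\lambda}\circ q\|_Z$ is then immediate from the definition. Injectivity of $\overline{\lambda}$ comes from the reverse implication in the orbit-labeling property: $\overline{\lambda}(G\cdot x) = \overline{\lambda}(G\cdot x')$ gives $\lambda(x) = \lambda(x')$, hence $G\cdot x = G\cdot x'$.

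The argument is essentially routine diagram-chasing; I do not anticipate a genuine obstacle. The only point requiring care is making sure that all elements of $q(Z)$ are hit by representatives drawn from $Z$ (not from $X\setminus Z$), which is why the theorem statement uses the \emph{strong} restriction $q\|_Z: Z \to q(Z)$ rather than the ordinary restriction $q|_Z: Z \to X/G$. With that surjectivity in hand, the construction of $\overline{\lambda}$ and the verification of its injectivity are direct consequences of the orbit-labeling property.
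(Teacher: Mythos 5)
Your proof is correct and is exactly the routine factor-through-the-quotient argument that the paper has in mind (the paper in fact omits the proof entirely, remarking only that the claim is ``clear''). Both directions are handled properly, including the well-definedness and injectivity of $\overline{\lambda}$ and the role of the strong restriction $q||_Z$ in guaranteeing that every element of $q(Z)$ has a representative in $Z$.
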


It does not matter what the labels are in the orbit-labeling property~\eqref{eqn:orbit-labeling-property}, so we have full freedom in both designing the labels ($L$) and assigning them to points in the same orbits.
Yet in this paper, we focus on labels that are points ($L = X$), and if possible make them representatives of the orbits (Definition~\ref{def:orbit-representative-map}).

\begin{definition}\label{def:orbit-representative-map}
Let $G\curvearrowright X$ be a $G$-action on $X$, and $q: X \to X/G$ be the orbit-quotient map.
We call a function $\rho: X \to X$ an \emph{orbit-representative map of $G \curvearrowright X$} if there exists a section $\overline{\rho}: X/G \to X$ of $q$ (\ie a right inverse of $q$: $q \circ \overline{\rho} = \idty_{_{X/G}}$) such that the diagram in Figure~\ref{fig:orbit-representative-map} is commutative: $\rho = \overline{\rho} \circ q$.
\begin{figure}[h!]
\begin{center}
\includegraphics[width=0.24\columnwidth]{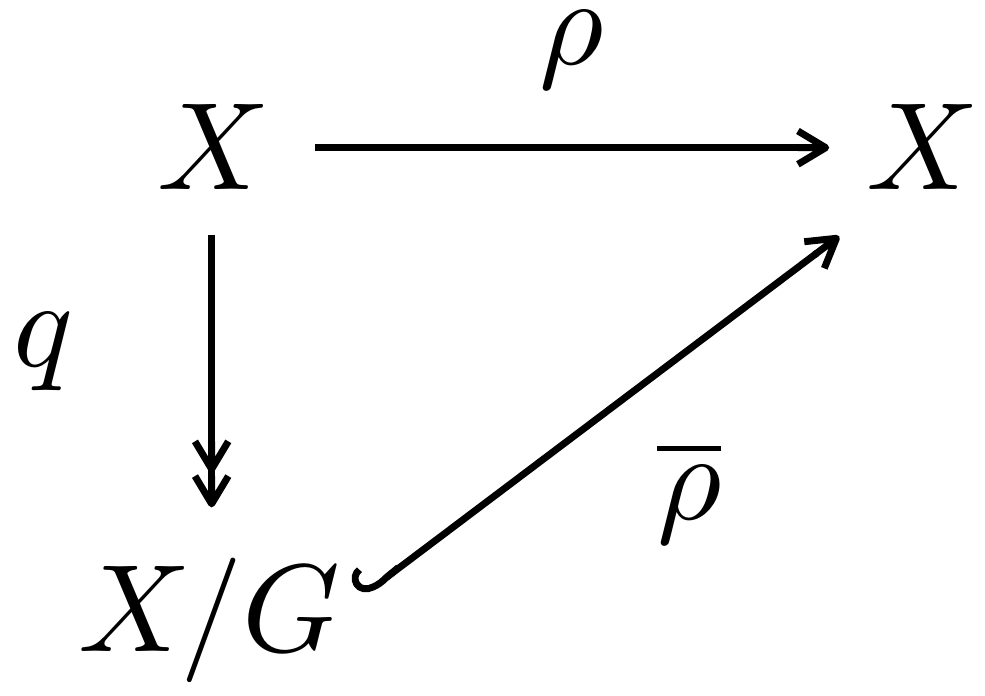}
\end{center}
\caption{A commutative diagram illustrating an orbit-representative map ($\rho$). In this diagram, $\overline{\rho} \circ q = \rho$; oppositely, $q \circ \overline{\rho} = \idty_{_{X/G}}$.}
\label{fig:orbit-representative-map}
\end{figure}
\end{definition}
\begin{remark}\label{remark:orbit-representative-map-is-orbit-labeling-map}
Comparing the two diagrams in Figures~\ref{fig:orbit-labeling-map} and \ref{fig:orbit-representative-map}, we can see that any orbit-representative map of $G \curvearrowright X$ is automatically an orbit-labeling map of $G \curvearrowright X$ on $X$, by making the following special choices in Figure~\ref{fig:orbit-labeling-map}:
\begin{enumerate}[label=$\blacksquare~$, itemsep=-0.03in]
\item $Z = X$, $L = X$, and further,
\item $\overline{\lambda}$ is not only injective but also a section of $q$.
\end{enumerate}
\end{remark}

\begin{theorem}\label{thm:three-properties-of-rep-qmap}
Let $\rho: X \to X$ be an orbit-representative map of a group action $G \curvearrowright X$, then the following three properties hold:
\begin{enumerate}[label=$\langle\,{\rm P}\arabic*.\, \rangle$, itemindent=0.15in, itemsep=-0.03in]
\item\label{item1:three-properties-of-rep-qmap}
\emph{orbit-labeling:} for any $x,x' \in X$, $\rho(x) = \rho(x') \iff G\cdot x = G\cdot x'$.
\item\label{item2:three-properties-of-rep-qmap}
\emph{representative:} for any $x \in X$, $G \cdot \rho(x) = G\cdot x$.
\item\label{item3:three-properties-of-rep-qmap}
\emph{idempotent:} $\rho \circ \rho = \rho$.
\end{enumerate}
\end{theorem}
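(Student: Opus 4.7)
The plan is to unpack the definition of an orbit-representative map directly: $\rho = \overline{\rho} \circ q$ where $\overline{\rho}$ is a section of the orbit-quotient map, i.e., $q \circ \overline{\rho} = \idty_{_{X/G}}$. Once this identity is in hand, all three properties follow from straightforward diagram chasing, with no intermediate combinatorial arguments needed. I would prove the three items in the order (P2), (P1), (P3) since (P3) uses (P2), and (P1) is cleanest when the injectivity of $\overline{\rho}$ is isolated as a preliminary observation.

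First I would record the fact that any section is injective: if $\overline{\rho}(c) = \overline{\rho}(c')$ then applying $q$ gives $c = (q\circ\overline{\rho})(c) = (q\circ\overline{\rho})(c') = c'$. For (P2), I would compute $q(\rho(x)) = q(\overline{\rho}(q(x))) = (q\circ\overline{\rho})(q(x)) = q(x)$, so $G\cdot \rho(x) = G\cdot x$. For (P1), the forward direction of the orbit-labeling equivalence follows immediately from (P2): if $\rho(x) = \rho(x')$, then applying $q$ to both sides yields $q(x) = q(x')$ by the same calculation. For the reverse direction, $G \cdot x = G \cdot x'$ means $q(x) = q(x')$, and applying $\overline{\rho}$ gives $\rho(x) = \rho(x')$ directly. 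For (P3), using (P2), $\rho(\rho(x)) = \overline{\rho}(q(\rho(x))) = \overline{\rho}(q(x)) = \rho(x)$.

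There is essentially no obstacle here; the only subtlety to flag is that one must be careful to use the correct composition identity for a section: $q \circ \overline{\rho} = \idty_{_{X/G}}$, not the reverse. In particular, $\overline{\rho} \circ q$ is generally \emph{not} the identity on $X$ (that is exactly $\rho$, which is typically far from $\idty_X$). Keeping this straight is what makes (P3) work: the idempotence of $\rho$ is precisely the assertion that $\overline{\rho}\circ q$ is a retraction of $X$ onto the image of $\overline{\rho}$, which in turn reflects that $\overline{\rho}$ picks a representative per orbit once and for all.

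Overall this is a short proof whose content is entirely contained in the commutative diagram of Figure~\ref{fig:orbit-representative-map}; my write-up would emphasize the section identity $q \circ \overline{\rho} = \idty_{_{X/G}}$ as the single algebraic fact driving all three conclusions, and would avoid introducing any choice of representative or explicit formulas for $\overline{\rho}$, since none are needed.
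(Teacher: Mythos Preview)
Your proposal is correct and follows essentially the same approach as the paper: both unpack $\rho = \overline{\rho}\circ q$ and use the section identity $q\circ\overline{\rho} = \idty_{_{X/G}}$ to read off all three properties. The only cosmetic differences are that the paper cites Remark~\ref{remark:orbit-representative-map-is-orbit-labeling-map} for \ref{item1:three-properties-of-rep-qmap} and proves \ref{item3:three-properties-of-rep-qmap} by direct associativity rather than via \ref{item2:three-properties-of-rep-qmap}, but the underlying computations are the same.
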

\begin{proof}
All properties are immediate from the definition of an orbit-representative map $\rho = \overline{\rho} \circ q$ where $\overline{\rho}$ is a section of the orbit-quotient map $q$ (of $G \curvearrowright X$).
\begin{enumerate}[label=$\langle\,{\rm P}\arabic*.\, \rangle$, itemindent=0.15in, topsep=0.03in, itemsep=-0.03in]
\item The orbit-labeling property is immediate from Remark~\ref{remark:orbit-representative-map-is-orbit-labeling-map}.
\item For any $x \in X$, $G \cdot \rho(x) = q \circ (\overline{\rho} \circ q)(x) = (q \circ \overline{\rho}) \circ q(x) = q(x) = G\cdot x$.
\item $\rho \circ \rho = (\overline{\rho} \circ q) \circ (\overline{\rho} \circ q) = \overline{\rho} \circ (q \circ \overline{\rho}) \circ q = \overline{\rho} \circ q = \rho$.
\vspace{-0.24in}
\end{enumerate}
\end{proof}

\newpage
\begin{remark}\label{remark:three-properties-of-rep-qmap}
We interpret Theorem~\ref{thm:three-properties-of-rep-qmap} in words as follows:
\begin{enumerate}[label=$\blacksquare~$, itemsep=-0.03in]
\item \ref{item1:three-properties-of-rep-qmap} and \ref{item2:three-properties-of-rep-qmap} collectively indicate that ${\rm Im}\rho = \rho(X)$ is a \emph{fundamental domain} of $G \curvearrowright X$ in the sense that it is a subset of $X$ containing exactly one point---called a representative---from each of the orbits;
\item \ref{item3:three-properties-of-rep-qmap} indicates that $\rho$ is a \emph{projection} onto the fundamental domain ${\rm Im}\rho$ in the sense that it is an idempotent.
\end{enumerate}
Therefore, $\rho$ projects every $x \in X$ to a chosen \emph{representative} of the orbit of $x$.
The representative $\rho(x) = \overline{\rho}(G\cdot x)$ is precisely designated by the section $\overline{\rho}$; the fundamental domain ${\rm Im}\rho = {\rm Im}\overline{\rho}$ is the set of representatives, which bijectively corresponds to the quotient $X/G$.
\end{remark}

Conversely, one can check: none of the three properties alone is sufficient to make a function an orbit-representative map.
So, being an orbit-representative map is stronger than just being an orbit-labeling map (only satisfies \ref{item1:three-properties-of-rep-qmap}), but further requires the label of each orbit to be a point in that orbit, \ie indeed a representative.
However, it turns out that the latter is the only missing piece.
As a result, we have the following converse theorems of Theorem~\ref{thm:three-properties-of-rep-qmap}.

\begin{theorem}\label{thm:rep-qmap-def-alt1}
Let $G \curvearrowright X$ be a $G$-action on $X$.
A function $\rho: X \to X$ is an orbit-representative map of $G \curvearrowright X$ if it satisfies \ref{item1:three-properties-of-rep-qmap} \ref{item2:three-properties-of-rep-qmap} in Theorem~\ref{thm:three-properties-of-rep-qmap}.
\end{theorem}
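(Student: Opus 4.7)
The plan is to construct the section $\overline{\rho}\colon X/G \to X$ explicitly and verify it has all the required properties. The natural candidate is to \emph{define} $\overline{\rho}$ on each orbit $G\cdot x$ by setting $\overline{\rho}(G\cdot x) := \rho(x)$. Once this is a well-defined function and a section of $q$, the factorization $\rho = \overline{\rho} \circ q$ holds by construction, so $\rho$ qualifies as an orbit-representative map per Definition~\ref{def:orbit-representative-map}.

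First, I would check well-definedness of $\overline{\rho}$. Suppose $G\cdot x = G\cdot x'$; then by the ``$\Longleftarrow$'' direction of \ref{item1:three-properties-of-rep-qmap}, $\rho(x) = \rho(x')$, so the value $\overline{\rho}(G\cdot x) = \rho(x)$ does not depend on the choice of representative. This is the only step where well-definedness could in principle fail, and the orbit-labeling property is exactly what rules it out.

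Next, I would verify that $\overline{\rho}$ is a section of $q$, i.e.\ $q \circ \overline{\rho} = \idty_{_{X/G}}$. For any orbit $G\cdot x$,
\[
(q \circ \overline{\rho})(G\cdot x) \;=\; q(\rho(x)) \;=\; G\cdot \rho(x) \;=\; G\cdot x,
\]
where the last equality is precisely the representative property~\ref{item2:three-properties-of-rep-qmap}. So this is the step where P2 is used, and it is immediate. Finally, $(\overline{\rho}\circ q)(x) = \overline{\rho}(G\cdot x) = \rho(x)$ by the definition of $\overline{\rho}$, giving the commutative triangle of Figure~\ref{fig:orbit-representative-map}.

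I do not anticipate any serious obstacle: the argument is essentially bookkeeping on the universal property of the orbit-quotient map. The only subtle point is recognizing that the two given properties P1 and P2 play distinct roles --- P1 provides well-definedness of $\overline{\rho}$ (so it exists as a function on $X/G$ at all), while P2 provides that $\overline{\rho}$ lands in the correct orbit (so that $q\circ\overline{\rho}$ is the identity). Neither role can be filled by the other property, and idempotency~\ref{item3:three-properties-of-rep-qmap} is not needed here (it is a consequence, not a hypothesis).
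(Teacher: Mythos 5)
Your proposal is correct and follows essentially the same route as the paper's proof: define $\overline{\rho}(G\cdot x) := \rho(x)$, use \ref{item1:three-properties-of-rep-qmap} for well-definedness and \ref{item2:three-properties-of-rep-qmap} to show $q\circ\overline{\rho} = \idty_{_{X/G}}$, then conclude $\rho = \overline{\rho}\circ q$. The only cosmetic difference is that the paper also records that $\overline{\rho}$ is injective via the other direction of \ref{item1:three-properties-of-rep-qmap}, which is not needed since any right inverse of $q$ is automatically injective.
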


\begin{theorem}\label{thm:rep-qmap-def-alt2}
Let $G \curvearrowright X$ be a $G$-action on $X$.
A function $\rho: X \to X$ is an orbit-representative map of $G \curvearrowright X$ if it satisfies \ref{item1:three-properties-of-rep-qmap} \ref{item3:three-properties-of-rep-qmap} in Theorem~\ref{thm:three-properties-of-rep-qmap}.
\end{theorem}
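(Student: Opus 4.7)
The plan is to reduce Theorem~\ref{thm:rep-qmap-def-alt2} to Theorem~\ref{thm:rep-qmap-def-alt1} by showing that properties \ref{item1:three-properties-of-rep-qmap} and \ref{item3:three-properties-of-rep-qmap} together force property \ref{item2:three-properties-of-rep-qmap}. Once \ref{item2:three-properties-of-rep-qmap} is in hand, the theorem follows immediately from the previous theorem.

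The key observation is that \ref{item3:three-properties-of-rep-qmap} gives $\rho(\rho(x)) = \rho(x)$ for every $x \in X$. Applying \ref{item1:three-properties-of-rep-qmap} to the pair $(\rho(x), x)$, the equality $\rho(\rho(x)) = \rho(x)$ is equivalent to $G \cdot \rho(x) = G \cdot x$, which is precisely \ref{item2:three-properties-of-rep-qmap}. This derivation is the core of the argument and is the one non-routine step.

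Alternatively, I could give a direct construction of the section $\overline{\rho}$ without invoking Theorem~\ref{thm:rep-qmap-def-alt1}. First, define $\overline{\rho}: X/G \to X$ on representatives by $\overline{\rho}(G \cdot x) := \rho(x)$; this is well-defined on the quotient because \ref{item1:three-properties-of-rep-qmap} guarantees that $G \cdot x = G \cdot x'$ implies $\rho(x) = \rho(x')$. Second, the factorization $\rho = \overline{\rho} \circ q$ holds by construction since $(\overline{\rho} \circ q)(x) = \overline{\rho}(G\cdot x) = \rho(x)$. Third, to verify that $\overline{\rho}$ is a section of $q$, compute
\begin{equation*}
(q \circ \overline{\rho})(G \cdot x) = q(\rho(x)) = G \cdot \rho(x),
\end{equation*}
and this must equal $G \cdot x$; exactly as in the first approach, this equality follows by applying \ref{item1:three-properties-of-rep-qmap} to the identity $\rho(\rho(x)) = \rho(x)$ supplied by \ref{item3:three-properties-of-rep-qmap}.

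The main (and only) obstacle is recognizing that the idempotence condition \ref{item3:three-properties-of-rep-qmap} is exactly the hypothesis needed to invoke the ``$\Leftarrow$'' direction of \ref{item1:three-properties-of-rep-qmap} so as to conclude $G\cdot \rho(x) = G\cdot x$; everything else is bookkeeping. The result is mildly surprising because \ref{item3:three-properties-of-rep-qmap} is an intrinsic, purely set-theoretic condition on $\rho$ (no reference to the action), yet combined with \ref{item1:three-properties-of-rep-qmap} it upgrades $\rho$ from being merely an orbit-labeling map to being an orbit-representative map. This mirrors the structure of Theorem~\ref{thm:rep-qmap-def-alt1} and explains the remark in the text that no single one of \ref{item1:three-properties-of-rep-qmap}--\ref{item3:three-properties-of-rep-qmap} is sufficient on its own, while any two of them are.
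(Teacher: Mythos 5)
Your proof is correct. The key step---applying the labeling property \ref{item1:three-properties-of-rep-qmap} to the pair $(\rho(x),x)$ so that the idempotence identity $\rho(\rho(x))=\rho(x)$ supplied by \ref{item3:three-properties-of-rep-qmap} converts into $G\cdot\rho(x)=G\cdot x$, \ie property \ref{item2:three-properties-of-rep-qmap}---is valid, and once \ref{item2:three-properties-of-rep-qmap} is in hand the reduction to Theorem~\ref{thm:rep-qmap-def-alt1} is immediate. The paper's own proof takes a slightly different route: it rebuilds the section $\overline{\rho}(G\cdot x):=\rho(x)$ from scratch (well-definedness and injectivity from \ref{item1:three-properties-of-rep-qmap}), uses idempotence to show $\overline{\rho}\circ q\circ\overline{\rho}=\overline{\rho}$, and then cancels the injective $\overline{\rho}$ on the left to conclude $q\circ\overline{\rho}=\idty_{_{X/G}}$. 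Your reduction is more modular, since it isolates the implication ``\ref{item1:three-properties-of-rep-qmap} and \ref{item3:three-properties-of-rep-qmap} imply \ref{item2:three-properties-of-rep-qmap}'' and then reuses the previous theorem wholesale; the paper's version is self-contained and makes the injectivity-cancellation trick explicit. Your second, direct construction is essentially the paper's argument with the section property verified via the same P1-applied-to-$(\rho(x),x)$ observation rather than via cancellation.

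One correction to your closing remark: it is not true that \emph{any} two of the three properties suffice. Properties \ref{item2:three-properties-of-rep-qmap} and \ref{item3:three-properties-of-rep-qmap} together do not force $\rho$ to be an orbit-representative map: take $G=\integers/2\integers$ acting on $X=\{1,2\}$ by swapping the two points and $\rho=\idty_X$. Then \ref{item2:three-properties-of-rep-qmap} and \ref{item3:three-properties-of-rep-qmap} hold trivially, but \ref{item1:three-properties-of-rep-qmap} fails and the image of $\rho$ contains two points of the single orbit, so $\rho$ cannot factor as $\overline{\rho}\circ q$ for any section $\overline{\rho}$. The paper only asserts that the pairs $\{\ref{item1:three-properties-of-rep-qmap},\ref{item2:three-properties-of-rep-qmap}\}$ and $\{\ref{item1:three-properties-of-rep-qmap},\ref{item3:three-properties-of-rep-qmap}\}$ are sufficient.
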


\begin{remark}
Theorems~\ref{thm:rep-qmap-def-alt1} and \ref{thm:rep-qmap-def-alt2} provide two alternative and \emph{non-constructive} ways of proving a function is an orbit-representative map.
Their proofs are relegated to Appendices~\ref{app:rep-qmap-def-alt1} and \ref{app:rep-qmap-def-alt2}, respectively.
\end{remark}

To solve our restricted orbit computation problem~\eqref{eqn:orbit-computation-practical-special}, it suffices to have an algorithmic implementation of an orbit-labeling map.
Yet later (Section~\ref{sec:stage-1}), we will see that having an orbit-representative map, as a special orbit-labeling map, works nicely in the middle of our two-stage algorithm, regarding nice group actions (like translations) on nice infinite spaces (like $\ambsp$).

\section{Algorithmic Roadmap}
\label{sec:algorithmic-roadmap}

Thus far, we have described our special orbit computation problem in terms of its inputs and desired output.
We are now ready to present an specialized algorithm to solve this problem.
Referring to the problem formulation~\eqref{eqn:orbit-computation-practical-special}, our algorithm is an implementation of an orbit-labeling map $\lambda$ (output) of the group action $\langle S \rangle \curvearrowright \ambsp$ ($S$ is the first input) on the subset $Z \subseteq \ambsp$ (the second input).

The complete algorithm divides into several steps.
In this section, we first present an algorithmic roadmap to walk readers through the big picture, where we outline the major steps at a high level in their execution order.
Clearly, the nature of orbit computation is to identify the equivalence relation $\sim_{\langle S \rangle}$ induced by $\langle S \rangle \curvearrowright \ambsp$, where we say two points are related by an isometry in $\langle S \rangle$ (denoted $x \sim_{\langle S \rangle} x'$) if and only if there exists an $h \in \langle S \rangle$ such that $x' = h(x)$.
However, if we further have the semidirect-product decomposition $\langle S \rangle = \left[ \tcal_{\langle S \rangle} \circ \rcal_{\langle S \rangle} \right]$, the above isometry equivalence $\sim_{\langle S \rangle}$ can be decomposed accordingly.
So, the main idea here in the roadmap is to divide the whole procedure into two stages: translation equivalence $\sim_{\tcal_{\langle S \rangle}}$ first, and rotation equivalence $\sim_{\rcal_{\langle S \rangle}}$ in succession.

\begin{enumerate}[label=$\blacksquare~$, itemsep=-0.03in]
\item In the first stage (Step 1-3), we consider the \emph{translation equivalence} $\sim_{\tcal_{\langle S \rangle}}$ on $\ambsp$: $x \sim_{\tcal_{\langle S \rangle}} x' \iff \tcal_{\langle S \rangle} \cdot x = \tcal_{\langle S \rangle} \cdot x'$.
Restricting our attention to the subgroup $\tcal_{\langle S \rangle} \trianglelefteq \langle S \rangle$, we derive an explicit formula for an orbit-representative map $\rho_\tcal$ of the translation subgroup action $\tcal_{\langle S \rangle} \curvearrowright \ambsp$.
\item Building on the first stage, in the second stage (Step 4-5), we further consider a \emph{rotation equivalence} $\sim_{\rcal_{\langle S \rangle}}$ on ${\rm Im}\rho_\tcal$.
It is important to point out that, rather than on $\ambsp$, this rotation equivalence $\sim_{\rcal_{\langle S \rangle}}$ is a new equivalence relation on ${\rm Im}\rho_\tcal$, \ie a fundamental domain of $\tcal_{\langle S \rangle}\curvearrowright \ambsp$ representing the space of equivalence classes under $\sim_{\tcal_{\langle S \rangle}}$.
Roughly speaking, we superimpose $\sim_{\rcal_{\langle S \rangle}}$ among the equivalence classes under $\sim_{\tcal_{\langle S \rangle}}$, and merge two equivalence classes in the first stage if they are equivalent in the second stage.
Instead of an explicit formula, the while-loop implements the rotation equivalence $\sim_{\rcal_{\langle S \rangle}}$.
\end{enumerate}

\vspace{0.1in}
\noindent
\fbox{\parbox{\textwidth}{
\vspace{0.1in}\centerline{\bf Algorithmic Roadmap}
\begin{enumerate}[label=Step~\arabic*:, leftmargin=0.6in]
\item Compute the basis matrix $B$ for $\tcal_{\langle S \rangle}$, \ie the matrix\\
$B = [\bm{b_1}, \ldots, \bm{b_m}] \in \integers^{n\times m}$
with $\{t_{\bm{b_1}}, \ldots, t_{\bm{b_m}}\}$ being a basis of $\tcal_{\langle S \rangle}$.
\item Compute the pseudoinverse $B^\dagger := (B^\top B)^{-1}B^\top$.
\item Compute the orbit-representative map $\rho_{\tcal}(x) := x-B\lfloor B^\dagger x\rfloor$, for \\every $x \in Z$, after which the set $\rho_{\tcal}(Z)$ is computed.
\item Execute the while-loop below to obtain a label for every $x \in \rho_{\tcal}(Z)$:

\begin{minipage}{.7\textwidth}
\hspace{0.4in}
\begin{algorithm}[H]
  \While{$\rho_{\tcal}(Z)$ is not empty}{
    pick $\omega \in \rho_{\tcal}(Z)$\;
    compute the set: \\
    \hspace{0.1in} $C_\omega := \{r_{R\star}(\omega) \mid r_R \in \rcal_{\langle S \rangle}\} \cap \rho_{\tcal}(Z)$\\
    \hspace{0.1in} where $r_{R\star} := \rho_{\tcal} \circ r_R|_{{\rm Im}\rho_{\tcal}}$\;
    label every element in $C_\omega$ by $\omega$\;
    remove elements in $C_\omega$ from $\rho_{\tcal}(Z)$, \ie \\
    \hspace{0.1in} $\rho_{\tcal}(Z) \leftarrow \rho_{\tcal}(Z) \backslash C_\omega$\;
  }
\end{algorithm}
\end{minipage}

\item Label every $x \in Z$ by the label of $\rho_{\tcal}(x)$.
\end{enumerate}
}}

\vspace{0.1in}
The entire procedure in the algorithmic roadmap, including both stages, can be executed solely on the input subset $Z \subseteq \ambsp$.
In the end, we will show that composing the two functions implemented by the two stages yields an orbit-labeling map of $\langle S \rangle \curvearrowright \ambsp$ on $Z$, which finishes solving the orbit computation problem~\eqref{eqn:orbit-computation-practical-special}.

Next, Sections~\ref{sec:stage-1} and \ref{sec:stage-2} will respectively delve into the two stages: first considering \emph{translation equivalence}, then adding \emph{rotation equivalence}.
These two sections together will cover the details in each individual step, while in the same pass, prove that the algorithmic output equals the desired output defined in the orbit computation problem~\eqref{eqn:orbit-computation-practical-special}, verifying the correctness of the algorithm.

\section{Algorithmic Stage 1: Translation Equivalence (Step 1-3)}
\label{sec:stage-1}

Given any finite and atomic subset $S \subseteq \iz$ as input, consider $\tcal_{\langle S \rangle} = \langle S \rangle \cap \tz$ consisting of all the translations in $\langle S \rangle$.
In Stage 1, we derive an explicit formula for an orbit-representative map of $\tcal_{\langle S \rangle} \curvearrowright \ambsp$ capturing the translation equivalence $\sim_{\tcal_{\langle S \rangle}}$ on $\ambsp$.
This is started by finding a basis of $\tcal_{\langle S \rangle}$.

\subsection{Finding a Basis of $\tcal_{\langle S \rangle}$ (Step 1)}
\label{sec:step-1}

\begin{lemma}\label{lemma:translations-in-s-gen-is-free-z-module}
$\tcal_{\langle S \rangle}$ is a free $\integers$--module (of rank $\leq n$).
\end{lemma}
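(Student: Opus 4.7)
The plan is to reduce the lemma to the well-known structure theorem for subgroups of finitely generated free abelian groups. The core observation is that the ambient translation group $\tz$ is itself a free $\integers$-module of rank exactly $n$: the map $t_v \mapsto v$ is a group isomorphism $(\tz, \circ) \to (\ambsp, +) = (\integers^n, +)$, and the latter is the archetypical free $\integers$-module of rank $n$. Transporting the module structure across this isomorphism makes $\tz$ a free $\integers$-module with basis $\{t_{\bm e_1}, \ldots, t_{\bm e_n}\}$, where $\bm e_1, \ldots, \bm e_n$ are the standard unit vectors of $\integers^n$.

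Next I would observe that $\tcal_{\langle S \rangle} = \langle S \rangle \cap \tz$ is a subgroup of $\tz$ (it is the intersection of two subgroups of $\iz$, and by Theorem~\ref{thm:distinguishing-property-atomically-generated-subgroup-k-ary} it coincides with $\varphi_{\tran}(\langle S \rangle)$, so in particular it is well-defined and sits inside $\tz$). Under the isomorphism $\tz \cong \integers^n$, the subgroup $\tcal_{\langle S \rangle}$ corresponds to a subgroup of $\integers^n$, i.e.\  an integer lattice (not necessarily full-rank).

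The final step is to invoke the standard fact that any subgroup of a free $\integers$-module of rank $n$ is itself free of rank $\leq n$ (a consequence of the fact that $\integers$ is a principal ideal domain, or equivalently the elementary-divisors/Smith normal form theorem). Applied to the image of $\tcal_{\langle S \rangle}$ in $\integers^n$, this yields a finite $\integers$-basis of size at most $n$, and pulling back through the isomorphism produces translations $\{t_{\bm b_1}, \ldots, t_{\bm b_m}\}$ ($m \leq n$) that form a basis of $\tcal_{\langle S \rangle}$ as a free $\integers$-module.

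There is essentially no substantive obstacle: the whole argument is a one-line appeal to the subgroup structure theorem, once the identification $\tz \cong \integers^n$ is made explicit. The only thing worth being careful about is making sure the reader sees that $\tcal_{\langle S \rangle}$ is genuinely a subgroup of $\tz$ (which follows immediately from its definition as $\langle S \rangle \cap \tz$) so that the structure theorem applies; everything else is boilerplate.
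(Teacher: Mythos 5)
Your proposal is correct and follows essentially the same route as the paper: identify $\tz$ with the free $\integers$--module $\integers^n$ of rank $n$, observe that $\tcal_{\langle S \rangle} = \langle S \rangle \cap \tz$ is a submodule, and invoke the fact that over a PID every submodule of a free module of rank $n$ is free of rank $\leq n$. The only cosmetic difference is that you make the isomorphism $t_v \mapsto v$ explicit, which the paper leaves implicit.
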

\begin{proof}
Both $\tcal_{\langle S \rangle}$ and $\tz$ are abelian, so both are $\integers$--modules;
in particular, $\tcal_{\langle S \rangle}$ is a submodule of $\tz$ since  $\tcal_{\langle S \rangle} \subseteq \tz$.
Note that $\tz \cong \ambsp$ is a free $\integers$--module (of rank $n$) and $\integers$ is a PID (principal ideal domain).
These two facts imply that $\tcal_{\langle S \rangle}$ is free (of rank $\leq n$).
\end{proof}

\begin{remark}
Lemma~\ref{lemma:translations-in-s-gen-is-free-z-module} ensures that it makes sense to talk about a basis of $\tcal_{\langle S \rangle}$ as a free $\integers$--module (as well as its rank).
\end{remark}

To find a basis of  $\tcal_{\langle S \rangle}$, we start from a generating set of it, namely $\tcal_S^+$ as defined earlier in Equation~\eqref{eqn:atomically-generated-subgroup-tran-zn-char}.
By definition,
\begin{align*}
\tcal_S^+ := (\tcal_S)^{\rcal_{\langle S \rangle}} &= \{r_R \circ t_v \circ r_R^{-1} \mid t_v \in \tcal_S, r_R \in \rcal_{\langle S \rangle} \} \\
&= \{t_{Rv} \mid t_v \in \tcal_S, r_A \in \rcal_{\langle S \rangle} \}.
\end{align*}
Note that $\tcal_S = S \cap \tz$ is finite since $S$ is finite; further, $\rcal_{\langle S \rangle} = \langle S \rangle \cap \rz$ is also finite since $\rz$ is finite (recall: $|\rz| = |\nz|\cdot |\pz| = 2^n(n!)$).
Therefore, $\tcal_S^+$ is finite, and elements in $\tcal_S^+$ can be algorithmically enumerated by a nested loop with the outer loop enumerating $t_v \in \tcal_S$ and the inner loop enumerating $r_R \in \rcal_{\langle S \rangle}$ and then taking the conjugate.

Enumerating $t_v \in \tcal_S$ is clear, since $\tcal_S$ is given (translations in $S$).
To enumerate $r_R \in \rcal_{\langle S \rangle}$, we leverage its semidirect-product decomposition expressed in Equation~\eqref{eqn:atomically-generated-subgroup-rota-zn-binary-semidirect-product-decomp} \ie $\rcal_{\langle S \rangle} = \left[ \ncal_{\langle S \rangle} \circ \pcal_{\langle S \rangle} \right]$.
So, the enumeration of $r_R \in \rcal_{\langle S \rangle}$ can also be done by a nested loop with the outer loop enumerating $r_N \in \ncal_{\langle S \rangle}$ and the inner loop enumerating $r_P \in \pcal_{\langle S \rangle}$ and then taking the composition.

Enumerating $r_P \in \pcal_{\langle S \rangle}$ is done by $\pcal_{\langle S \rangle} = \langle \pcal_{S} \rangle$ in Equation~\eqref{eqn:atomically-generated-subgroup-perm-zn-char}, \ie computing the permutation subgroup generated by $\pcal_S$.
This is a well-studied computational problem \eg via the computation of a base and strong generating set (BSGS)~\cite{HoltEO2005, Sims1970, Knuth1991}, and we directly plug in an off-the-shelf implementation (\eg from GAP or MAGMA).
Notably, in special cases when the dimensionality $n$ is small, we can always precompute---a one-time computation---and cache the full family $\mathbb{P}$ that comprises all subgroups of $\pz$ (this is basically the same as enumerating all subgroups of the symmetric group ${\rm Sym}_n$ since $\pz \cong {\rm Sym}_n$), then $\langle \pcal_S \rangle$ by definition, is the smallest member in $\mathbb{P}$ containing $\pcal_S$.

Enumerating $r_N \in \ncal_{\langle S \rangle}$ is done by $\ncal_{\langle S \rangle} = \langle \ncal_S^+ \rangle$ in Equation~\eqref{eqn:atomically-generated-subgroup-nega-zn-char}, where
\begin{align*}
\ncal_S^+ := (\ncal_S)^{\pcal_{\langle S \rangle}} &= \{r_P \circ r_N \circ r_P^{-1} \mid r_N \in \ncal_S, r_P \in \pcal_{\langle S \rangle} \} \\
&= \{r_{PNP^{-1}} \mid r_N \in \ncal_S, r_P \in \pcal_{\langle S \rangle} \}.
\end{align*}
Like the enumeration of $\tcal_S^+$, elements in $\ncal_S^+$ can be algorithmically enumerated by a nested loop with the outer loop enumerating $r_N \in \ncal_S$ (given) and the inner loop enumerating $r_P \in \pcal_{\langle S \rangle}$ (described earlier) and then taking the conjugate.

Now the final question: once we have enumerated elements in $\ncal_S^+$, how do we enumerate elements in $\ncal_{\langle S \rangle} = \langle \ncal_S^+ \rangle$?
The catch here is the following lemma.
\begin{lemma}
\label{lemma:negation-subgroup-free-z2-module}
$\nz \cong \integers_2^n$ is an $n$-dimensional vector space over $\integers_2$, and $\ncal_{\langle S \rangle}$ is a subspace (of $\nz$) of dimension $\leq n$. Then clearly, $|\ncal_{\langle S \rangle}| = 2^{\rm{dim}(\ncal_{\langle S \rangle})}$.
\end{lemma}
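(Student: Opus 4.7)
The plan is to identify $\nz$ explicitly as an elementary abelian $2$-group, at which point the vector-space structure over $\integers_2$ comes for free and the rest of the claim is immediate. First I would fix the obvious bijection $\sigma: \nz \to \integers_2^n$ sending a negation $r_N$ to the vector whose $i$th coordinate is $0$ if $N_{ii} = 1$ and $1$ if $N_{ii} = -1$. Because diagonal matrices commute and the product of two $\pm 1$ diagonal matrices is given entrywise by $(N N')_{ii} = N_{ii} N'_{ii}$, one checks that composition in $\nz$ corresponds under $\sigma$ to coordinatewise addition mod $2$ in $\integers_2^n$, so $\sigma$ is a group isomorphism.

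Since $(\integers_2^n, +)$ is an $n$-dimensional vector space over the field $\integers_2$, transporting this structure through $\sigma$ makes $\nz$ an $n$-dimensional $\integers_2$-vector space. Here the key point I would emphasize is that for an abelian group $A$ in which every nonidentity element has order $2$, the group structure already determines a unique $\integers_2$-module structure (scalar multiplication must be $0 \cdot a = e$ and $1 \cdot a = a$), so there is no additional choice to justify. In particular, in this setting the notions of subgroup and $\integers_2$-subspace coincide.

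Next I would invoke Theorem~\ref{thm:distinguishing-property-atomically-generated-subgroup-k-ary} (or rather its specialization in Equation~\eqref{eqn:atomically-generated-subgroup-nega-zn-char}) to note that $\ncal_{\langle S \rangle} = \langle S \rangle \cap \nz$ is a subgroup of $\nz$, hence a $\integers_2$-subspace of the $n$-dimensional space $\nz \cong \integers_2^n$. Its dimension is therefore an integer between $0$ and $n$.

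Finally, for the cardinality, any $d$-dimensional subspace of a vector space over $\integers_2$ has exactly $2^d$ elements (choose $d$ basis vectors; each element is a unique $\integers_2$-linear combination, giving $2^d$ choices of coefficients). Applying this to $d = \dim(\ncal_{\langle S \rangle})$ gives $|\ncal_{\langle S \rangle}| = 2^{\dim(\ncal_{\langle S \rangle})}$. Since every step is essentially bookkeeping about $\pm 1$ diagonal matrices, I do not anticipate a genuine obstacle; the only care needed is in verifying that $\sigma$ is multiplicative, which reduces to the entrywise identity $N_{ii} N'_{ii} \in \{-1,+1\}$ mapping to addition mod $2$.
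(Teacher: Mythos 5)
Your proof is correct, and it fills in exactly the standard argument the authors evidently had in mind: the paper states this lemma without any proof, treating it as clear. Identifying $\nz$ with $(\integers_2^n,+)$ via the diagonal signs, observing that subgroups of an elementary abelian $2$-group are precisely $\integers_2$-subspaces (so $\ncal_{\langle S \rangle} = \langle S\rangle \cap \nz$ is one), and counting $2^{\dim}$ elements is precisely the intended reasoning, so there is nothing to compare against or correct.
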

\noindent
Therefore, from the generating set $\ncal_S^+$ and via a variant of Gaussian Elimination (over $\integers_2$), we can compute a basis of $\ncal_{\langle S \rangle} = \langle \ncal_S^+ \rangle$, and every element in $\ncal_{\langle S \rangle}$ can be identified by a unique (boolean) linear combination of the basis.

So far, in a backward manner, we have introduced the whole pathway of computing a generating set of $\tcal_{\langle S \rangle}$, namely $\tcal_S^+$, which consists of several nested subroutines.
In the last step, we compute a basis of $\tcal_{\langle S \rangle}$ from $\tcal_S^+$.
The complete computational flow is then summarized in Figure~\ref{fig:flow-of-computing-basis-of-translations-in-s}.
\begin{figure}[t]
\begin{center}
\includegraphics[width=0.61\columnwidth]{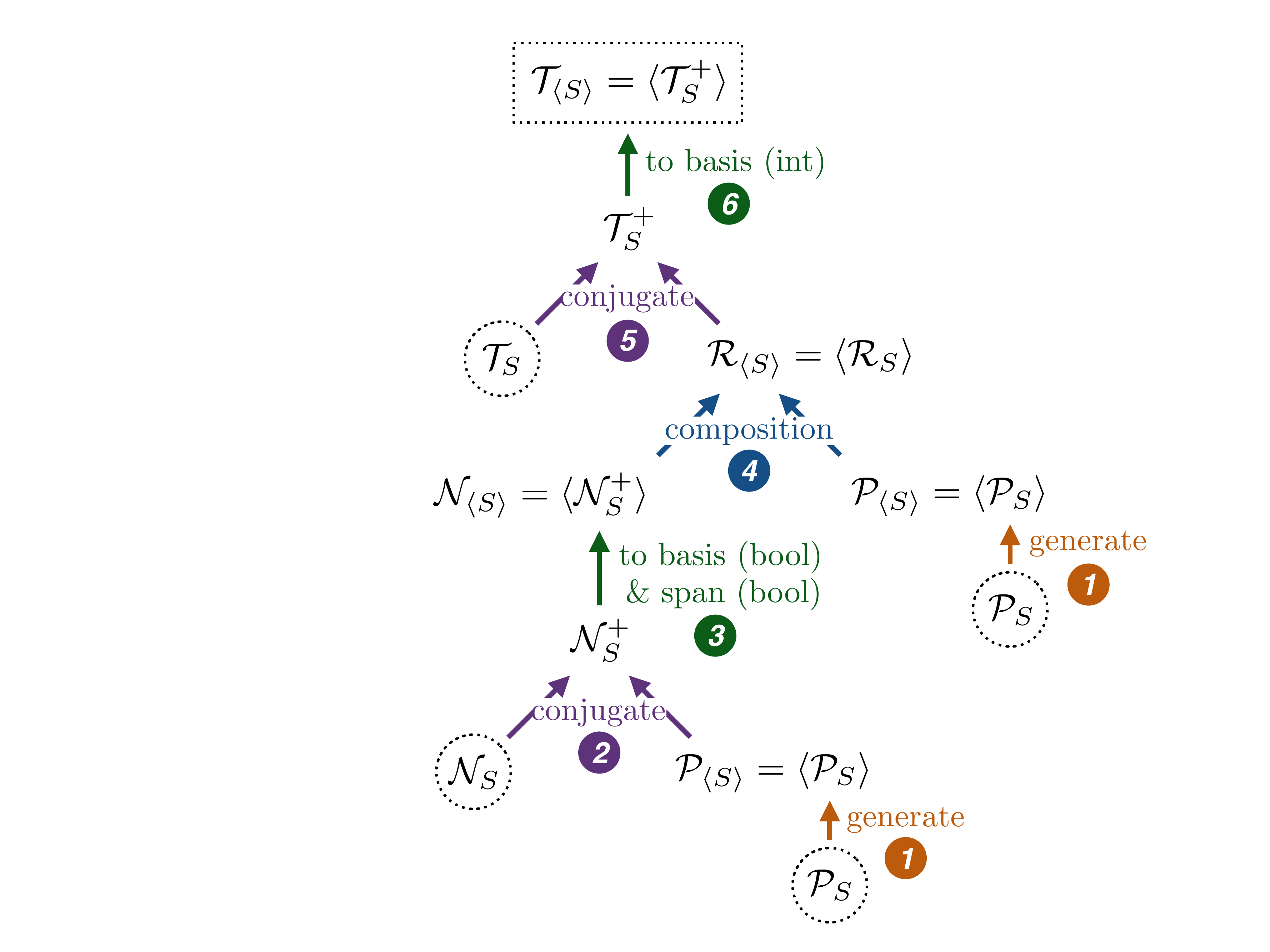}
\end{center}
\caption{The complete computational flow of computing a basis of $\tcal_{\langle S \rangle}$.
The dotted circles mark the inputs; the dotted rectangle marks the output.
Numbers mark the execution order.}
\label{fig:flow-of-computing-basis-of-translations-in-s}
\end{figure}
To finish the last piece, we detail the computation of a basis from a generating set of a free module.
There are two such cases involved in the complete computational flow (colored green in Figure~\ref{fig:flow-of-computing-basis-of-translations-in-s}): one is in the middle, for the $\integers_2$--vector space $\ncal_{\langle S \rangle} = \langle \ncal_S^+ \rangle$; the other is in the end, for the free $\integers$--module $\tcal_{\langle S \rangle} = \langle \tcal_S^+ \rangle$.

\begin{enumerate}[label=$\blacksquare~$, itemsep=-0.03in]
\item $\ncal_S^+ \longrightarrow$ a basis of $\langle \ncal_S^+ \rangle$: \emph{boolean row reduction}.

\begin{enumerate}[label=$\rhd~$, itemsep=-0.03in]
\item \textbf{Standard approach: use the group.}
We compute $\ncal_S^+ := (\ncal_S)^{\langle \pcal_S \rangle}$ explicitly from all pairs in $\ncal_S \times \langle \pcal_S \rangle$, so the full group $\langle \pcal_S \rangle$ is used.
After computing $\ncal_S^+$, we run the boolean version of Gaussian Elimination ($\mbox{GE}_b$) to get the unique \emph{reduced row echelon form} of the matrix obtained from aligning the boolean vectors that isomorphically correspond to the generators in $\ncal_S^+$ row by row.
We write $\mbox{GE}_b(\ncal_S^+)$ to denote the result of this process.
\underline{Note:} in the boolean version, Gaussian Elimination is extremely easy, since there are only two types of elementary row operations:
a) swapping two rows;
b) adding (\ie modulo-$2$ addition, or logical XOR) one row to another.
\item \textbf{Improved approach: use only generators.}
We never explicitly list all elements in $\ncal_S^+$, and we only use $\pcal_S$ instead of $\langle \pcal_S \rangle$ in the process of computing a basis of $\langle \ncal_S^+ \rangle$.
As a trade off, instead of one big $\mbox{GE}_b$ run, the whole process may contain multiple but smaller $\mbox{GE}_b$ runs in succession.
More specifically, we initialize $\bcal$ to be $\mbox{GE}_b(\ncal_S)$ and let $\pcal_S^* := \pcal_S \cup \{e\}$.
Next, we execute a while-loop: as long as $\bcal \neq \mbox{GE}_b(\bcal^{\pcal_S^*})$, we update $\bcal$ into $\mbox{GE}_b(\bcal^{\pcal_S^*})$; otherwise, we stop.
It is easy to check that whenever $\bcal = \mbox{GE}_b(\bcal^{\pcal_S^*})$, $\bcal$ is the desired basis, \ie $\bcal = \mbox{GE}_b(\ncal_S^+)$.
Further, it is easy to check that whenever $\bcal \neq \mbox{GE}_b(\bcal^{\pcal_S^*})$, $| \mbox{GE}_b(\bcal^{\pcal_S^*}) | - |\bcal| \geq 1$.
Since $\mbox{dim}(\langle \ncal_S^+ \rangle) \leq n$ (Lemma~\ref{lemma:negation-subgroup-free-z2-module}), the update of $\bcal$ occurs at most $n-1$ times.
Hence, there are at most $n$ $\mbox{GE}_b$ runs, each of which is on a much smaller generating set compared to $\ncal_S^+$; further, not only the explicit enumeration of $\ncal_S^+$ but also that of $\langle \pcal_S \rangle$ are avoided.
\end{enumerate}

\item $\tcal_S^+ \longrightarrow$ a basis of $\langle \tcal_S^+ \rangle$: \emph{integral row reduction}.

\begin{enumerate}[label=$\rhd~$, itemsep=-0.03in]
\item \textbf{Standard approach: use the group.}
We compute $\tcal_S^+ := (\tcal_S)^{\langle \rcal_S \rangle}$ explicitly from all pairs in $\tcal_S \times \langle \rcal_S \rangle$, so the full group $\langle \rcal_S \rangle$ is used.
After computing $\tcal_S^+$, we run the Lenstra-Lenstra-Lov\'{a}sz (LLL) Algorithm~\cite{LenstraLL1982} to get the Hermite normal form (reduced echelon form for matrices over $\integers$) of the matrix obtained from aligning the translation vectors of the generators in $\tcal_S^+$ row by row.
We write $\mbox{LLL}(\tcal_S^+)$ to denote the result of this process.
\underline{Note:} integral row reduction is computationally more expensive than boolean row reduction.
\item \textbf{Possibly improved: use only generators.}
We never explicitly list all elements in $\tcal_S^+$, and we only use $\rcal_S$ instead of $\langle \rcal_S \rangle$ in the process of computing a basis of $\langle \tcal_S^+ \rangle$.
As a trade off, instead of one big LLL run, the whole process may contain multiple but smaller LLL runs in succession.
Similar to the improved approach in the case of negations, we initialize $\bcal$ to be $\mbox{LLL}(\tcal_S)$ and let $\rcal_S^* := \rcal_S \cup \{e\}$.
Next, we execute a while-loop: as long as $\bcal \neq \mbox{LLL}(\bcal^{\rcal_S^*})$, we update $\bcal$ into $\mbox{LLL}(\bcal^{\rcal_S^*})$; otherwise, we stop.
It is again easy to check: whenever $\bcal = \mbox{LLL}(\bcal^{\rcal_S^*})$, $\bcal$ is the desired basis, \ie $\bcal = \mbox{LLL}(\tcal_S^+)$.
However, unlike what we saw in a $\integers_2$--vector space, here we generally do not have the guarantee on rank-increase when an update of $\bcal$ occurs.
We can certainly have $|\bcal| = |\mbox{LLL}(\bcal^{\rcal_S^*})|$ even if $\bcal \neq \mbox{LLL}(\bcal^{\rcal_S^*})$.
So, it is not immediately clear what is an upper bound on the number of updates (and the number of LLL runs) which may certainly exceed $n$.
Yet, the fact that we have avoided the explicit enumeration of $\langle \rcal_S \rangle$ renders the algorithm feasible for much larger $n$;
further in practice, the fact that we have avoided the explicit enumeration of $\tcal_S^+$ may benefit from running smaller-scale LLL processes, since a single LLL run on a large $\ncal_S^+$ is computationally very expensive.
\end{enumerate}

\end{enumerate}

\subsection{Computing the Translation Equivalence $\sim_{\tcal_{\langle S \rangle}}$ on $\ambsp$ (Step 2-3)}
\label{sec:step-2-3}

Let $\{t_{\bm{b_1}}, \ldots, t_{\bm{b_m}}\}$ be the basis of $\tcal_{\langle S \rangle}$ computed in the previous subsection, and call $B := [\bm{b_1}, \ldots, \bm{b_m}] \in \integers^{n\times m}$ the basis matrix of $\tcal_{\langle S \rangle}$.
Notably, $B^\top$ is the (row-style) Hermite normal form produced by the LLL algorithm.
The column space of the basis matrix $B\integers^m := \{B\mu \mid \mu \in \integers^m\}$ is the set comprising all linear combinations (with integral coefficients) of $\{\bm{b_1}, \ldots, \bm{b_m}\}$.
So, $B\integers^m \cong \tcal_{\langle S \rangle}$, and $B\integers^m$ is the set comprising all translation vectors found in $\tcal_{\langle S \rangle}$, \ie
\begin{align}\label{eqn:translation-vectors-in-basis}
v \in B\integers^m \iff t_v \in \tcal_{\langle S \rangle}.
\end{align}

One can check: the set $\{\bm{b_1}, \ldots, \bm{b_m}\}$ is linearly independent over $\reals$.
(\underline{Note:} $\{\bm{b_1}, \ldots, \bm{b_m}\}$ as a basis of a free $\integers$-module \emph{only} assures independence over $\integers$;
additional steps are needed to show independence over $\reals$ but not hard.)
Hence, as a real matrix, the basis matrix $B$ has full column rank, \ie $\mbox{rank}(B) = m$.
This further implies that $B^\top B$ is invertible~\cite{BoydV2018}.
We denote the pseudoinverse of $B$ by $B^\dagger := (B^\top B)^{-1}B^\top$, and review three known facts about it~\cite{BoydV2018}.
First, $B^\dagger$ is a left inverse, \ie $B^\dagger B = I$.
Second, for any $x \in \reals^n$, $BB^\dagger x$ is the Euclidean projection of $x$ onto the column space of $B$ (\ie the closest point in $B\reals^m$ to $x$), where $B^\dagger x$ gives the coefficients.
Third, any $x \in \reals^n$ can be decomposed into two orthogonal parts as follows: $x = (x - BB^\dagger x) + BB^\dagger x$.

\begin{definition}\label{def:rho-t}
Define $\rho_\tcal: \ambsp \to \ambsp$ by the following formula
\begin{align}\label{eqn:rho-t}
\rho_\tcal(x) := x - B\lfloor B^\dagger x\rfloor \quad \mbox{ for any } x \in \ambsp,
\end{align}
where $\lfloor \cdot \rfloor$ takes the floor function coordinate-wise.
\end{definition}

\begin{remark}
Figure~\ref{fig:floored-project} depicts the operator $B\lfloor B^\dagger \cdot \rfloor: \reals^n \to B \integers^m$:
\begin{figure}[h!]
\begin{center}
\includegraphics[width=0.5\columnwidth]{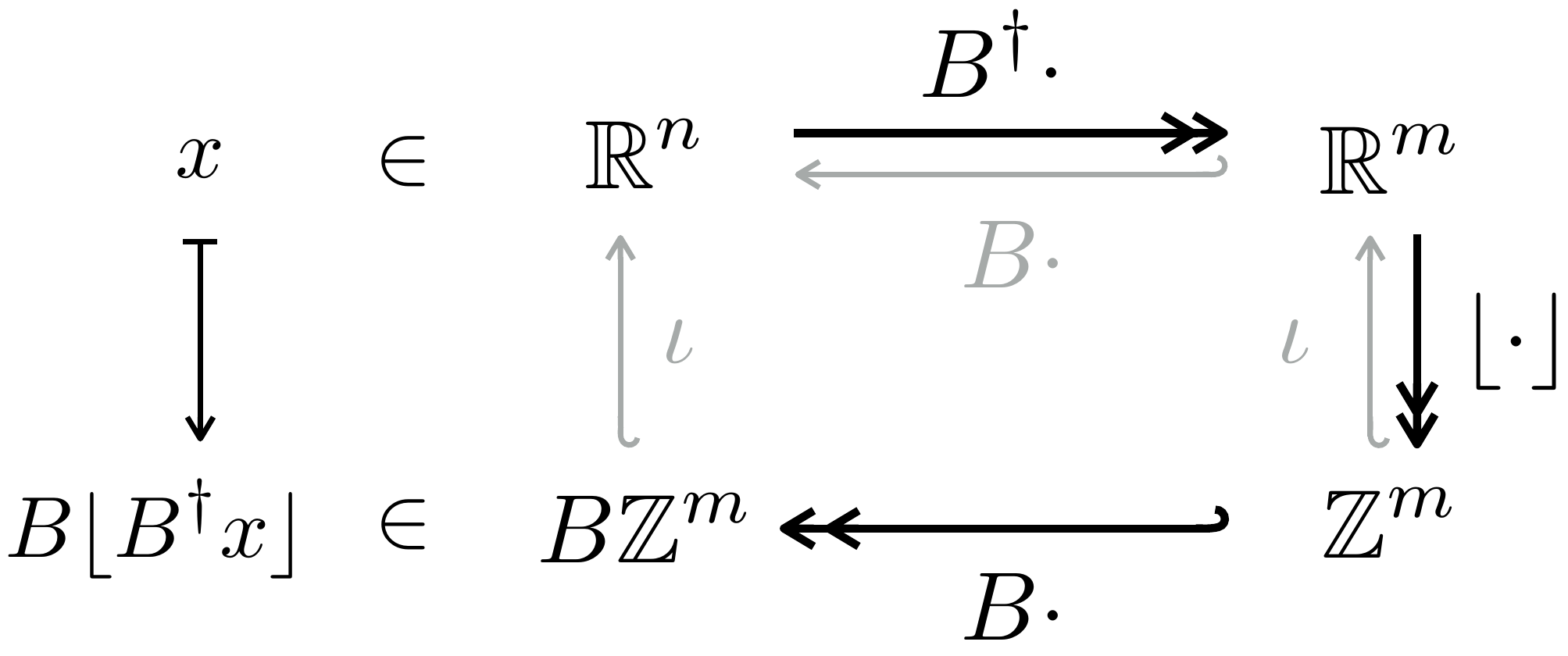}
\end{center}
\caption{A commutative diagram illustrating the operator $B\lfloor B^\dagger \cdot \rfloor: \reals^n \to B \integers^m$.}
\label{fig:floored-project}
\end{figure}
\end{remark}

\begin{theorem}\label{thm:rho-t-is-an-orbit-representative-map}
$\rho_\tcal$ is an orbit-representative map of $\tcal_{\langle S \rangle} \curvearrowright \ambsp$.
\end{theorem}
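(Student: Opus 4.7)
The plan is to invoke Theorem~\ref{thm:rep-qmap-def-alt1}, which says that it is enough to verify the two properties \ref{item1:three-properties-of-rep-qmap} (orbit-labeling) and \ref{item2:three-properties-of-rep-qmap} (representative) for $\rho_\tcal$ under the action $\tcal_{\langle S \rangle} \curvearrowright \ambsp$. Before that, I would make one preliminary sanity check: $\rho_\tcal$ indeed maps $\ambsp$ to $\ambsp$. Since $x \in \integers^n$, $B \in \integers^{n\times m}$, and $\lfloor B^\dagger x \rfloor \in \integers^m$ (coordinate-wise floor of a real vector), we have $B\lfloor B^\dagger x \rfloor \in \integers^n$, so $\rho_\tcal(x) \in \ambsp$.

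Next I would establish the representative property \ref{item2:three-properties-of-rep-qmap}. Set $v := -B\lfloor B^\dagger x \rfloor$, so that $\rho_\tcal(x) = x + v$. Because $\lfloor B^\dagger x \rfloor \in \integers^m$, we have $v \in B\integers^m$, and by the correspondence~\eqref{eqn:translation-vectors-in-basis} this means $t_v \in \tcal_{\langle S \rangle}$. Hence $\rho_\tcal(x) = t_v(x) \in \tcal_{\langle S \rangle} \cdot x$, which yields $\tcal_{\langle S \rangle}\cdot \rho_\tcal(x) = \tcal_{\langle S \rangle}\cdot x$.

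Then I would prove the orbit-labeling property \ref{item1:three-properties-of-rep-qmap}, handling both directions with the elementary floor identity $\lfloor u + \mu\rfloor = \lfloor u \rfloor + \mu$ for any $u \in \reals^m$ and $\mu \in \integers^m$. For ($\Leftarrow$), assume $\tcal_{\langle S \rangle}\cdot x = \tcal_{\langle S \rangle}\cdot x'$, so $x' = x + B\mu$ for some $\mu \in \integers^m$ by~\eqref{eqn:translation-vectors-in-basis}. Because $B^\dagger B = I$, $B^\dagger x' = B^\dagger x + \mu$, so $\lfloor B^\dagger x' \rfloor = \lfloor B^\dagger x \rfloor + \mu$, and substituting into~\eqref{eqn:rho-t} yields $\rho_\tcal(x') = \rho_\tcal(x)$. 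For ($\Rightarrow$), if $\rho_\tcal(x) = \rho_\tcal(x')$, then $x - x' = B\bigl(\lfloor B^\dagger x\rfloor - \lfloor B^\dagger x'\rfloor\bigr) \in B\integers^m$, so again by~\eqref{eqn:translation-vectors-in-basis}, the translation carrying $x'$ to $x$ lies in $\tcal_{\langle S \rangle}$, and the two orbits coincide.

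I do not anticipate a serious obstacle: both directions reduce to the identity $B^\dagger B = I$ (which the preceding discussion established from $\mathrm{rank}(B) = m$) combined with the integer-translation invariance of the floor function. The one point demanding care is the ($\Leftarrow$) direction of the orbit-labeling property, where one must use $B^\dagger B = I$ and full column rank of $B$ in order to translate the condition $x' - x \in B\integers^m$ into a clean statement about $\lfloor B^\dagger x'\rfloor - \lfloor B^\dagger x\rfloor$; beyond that, the argument is essentially algebraic bookkeeping.
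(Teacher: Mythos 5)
Your proposal is correct and follows essentially the same route as the paper's proof: verify the orbit-labeling and representative properties via the identity $B^\dagger B = I$ and the integer-shift invariance of the floor function, then invoke Theorem~\ref{thm:rep-qmap-def-alt1}. The preliminary check that $\rho_\tcal$ maps $\ambsp$ into $\ambsp$ is a small, welcome addition the paper leaves implicit.
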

\begin{proof}
We first check the orbit-labeling property. Pick any $x, x' \in \ambsp$.
Suppose $\tcal_{\langle S \rangle} \cdot x = \tcal_{\langle S \rangle} \cdot x'$, then there exists a translation $t_v \in \tcal_{\langle S \rangle}$ such that $x' = t_v(x) = x+v$.
By Condition~\eqref{eqn:translation-vectors-in-basis}, $v \in B\integers^m$, \ie $v = B\mu$ for some $\mu \in \integers^m$.
So,
\begin{align*}
\rho_\tcal(x') &= x' - B\lfloor B^\dagger x'\rfloor \\
&= x+B\mu - B\lfloor B^\dagger (x+B\mu)\rfloor \\
&= x+B\mu - B\lfloor B^\dagger x + \mu\rfloor \\
&= x+B\mu - B(\lfloor B^\dagger x\rfloor + \mu) \\
&= x - B\lfloor B^\dagger x\rfloor = \rho_\tcal(x).
\end{align*}
Conversely, suppose $\rho_\tcal(x) = \rho_\tcal(x')$, then $x - B\lfloor B^\dagger x\rfloor = x' - B\lfloor B^\dagger x'\rfloor$. Rearranging the terms yields $x' = x + B\mu = t_{B\mu}(x)$ with $\mu = \lfloor B^\dagger x'\rfloor-\lfloor B^\dagger x\rfloor \in \integers^m$.
By Condition~\eqref{eqn:translation-vectors-in-basis}, $t_{B\mu} \in \tcal_{\langle S \rangle}$.
Therefore, $\tcal_{\langle S \rangle}\cdot x = \tcal_{\langle S \rangle}\cdot x'$.

We next check the representative property.
By definition, for any $x \in X$, $\rho_\tcal(x) = x - B\lfloor B^\dagger x \rfloor = t_{- B\lfloor B^\dagger x \rfloor}(x)$ where the translation vector $- B\lfloor B^\dagger x \rfloor \in B\integers^m$.
By Condition~\eqref{eqn:translation-vectors-in-basis}, $t_{- B\lfloor B^\dagger x \rfloor} \in \tcal_{\langle S \rangle}$.
This implies $\tcal_{\langle S \rangle}\cdot \rho_\tcal(x) = \tcal_{\langle S \rangle}\cdot x$.
Thus, by Theorem~\ref{thm:rep-qmap-def-alt1}, $\rho_\tcal$ is an orbit-representative map of the translation subgroup action $\tcal_{\langle S \rangle} \curvearrowright \ambsp$.
\end{proof}

\begin{figure}[t]
\begin{center}
\includegraphics[width=0.99\columnwidth]{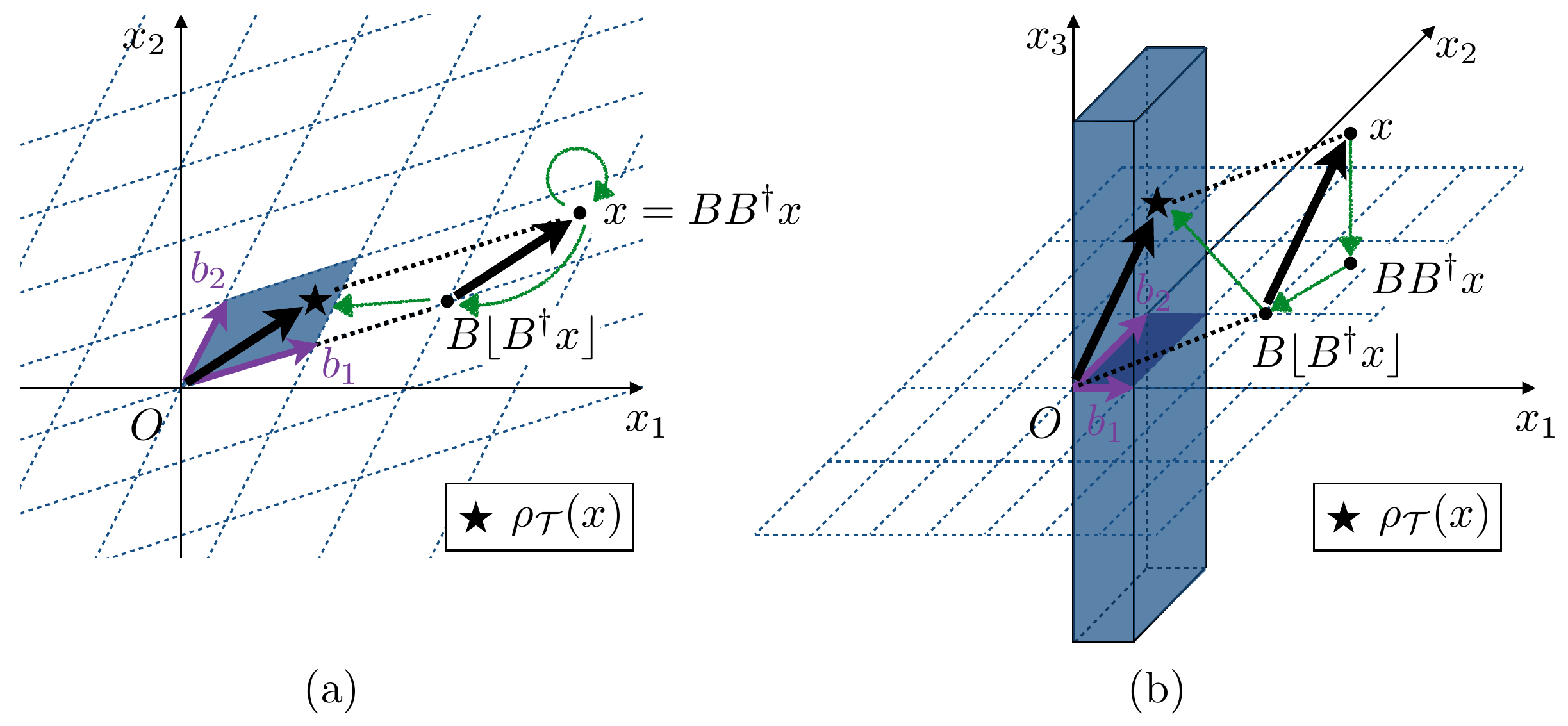}
\end{center}
\caption{Two examples for $\rho_\tcal: \ambsp \to \ambsp$ with some basis matrix $B = [\bm{b_1} ~ \bm{b_2}] \in \integers^{n\times 2}$: (a) $n=2$; (b) $n = 3$.
In both examples, $\rho_\tcal$ is an orbit-representative map of ${\rm span}(\{t_{\bm{b_1}},t_{\bm{b_2}}\}) \curvearrowright \ambsp$, and all integral points in the shaded area---the parallelogram in (a) and the infinite cube in (b)---form the fundamental domain ${\rm Im}\rho_\tcal$.
Given any $x \in \ambsp$, $\rho_\tcal(x)$ is a projection (in the sense of an idempotent) of $x$ onto the fundamental domain. This projection is pictorially dissected into the process (green path): $x \mapsto BB^\dagger x \mapsto B\lfloor B^\dagger x\rfloor \mapsto x - B\lfloor B^\dagger x\rfloor =: \rho_\tcal(x)$.
}
\label{fig:rho-t-2d-3d}
\end{figure}

We give two examples to illustrate the function $\rho_\tcal$ defined in Definition~\ref{def:rho-t}.
In the first example (Figure~\ref{fig:rho-t-2d-3d}a), we consider $\integers^2$ and $B \in \integers^{2 \times 2}$, where the pseudoinverse of $B$ is the inverse, \ie $B^\dagger = B^{-1}$; in the second example (Figure~\ref{fig:rho-t-2d-3d}b), we consider $\integers^3$ and $B \in \integers^{3 \times 2}$, where the pseudoinverse is only a left inverse.
In both examples, we dissect the mapping $x \mapsto \rho_\tcal(x)$ in the following process: $x \mapsto BB^\dagger x \mapsto B\lfloor B^\dagger x\rfloor \mapsto x - B\lfloor B^\dagger x\rfloor =: \rho_\tcal(x)$.

\begin{remark}
The following are immediate from the general properties of an orbit-representative map mentioned in Remark~\ref{remark:three-properties-of-rep-qmap}.
\begin{enumerate}[label=$\blacksquare~$, itemsep=-0.03in]
\item ${\rm Im}\rho_\tcal$ is a fundamental domain of $\tcal_{\langle S \rangle} \curvearrowright \ambsp$.
\item $\rho_\tcal$ is a projection onto ${\rm Im}\rho_\tcal$ in the sense of an idempotent: $\rho_\tcal \circ \rho_\tcal = \rho_\tcal$.
\item For any $x \in \ambsp$, $\rho_\tcal(x)$ is a representative of the orbit $\tcal_{\langle S \rangle} \cdot x$.
\end{enumerate}
\end{remark}

\section{Algorithmic Stage 2: Rotation Equivalence (Step 4-5)}
\label{sec:stage-2}

Since every isometry in $\langle S \rangle$ is uniquely represented by a translation in $\tcal_{\langle S \rangle}$ and a rotation in $\rcal_{\langle S \rangle}$ (\ie the atomic property and Equation~\eqref{eqn:atomically-generated-subgroup-isom-zn-binary-semidirect-product-decomp}), intuitively, merging classes of translation equivalent points that are further rotation equivalent yields the set of equivalence classes under $\sim_{\langle S \rangle}$, \ie the set of orbits under $\langle S \rangle \curvearrowright \ambsp$ (which is what we want).
In this section, we will make this intuition precise, while at the same time, elaborate Stage 2 in the algorithmic roadmap.
{\underline{Note:}} most conclusions in this section are pretty standard, whose proofs are straightforward thus omitted.
First in a general setting, we introduce \emph{induced quotient action}, which will be used later to depict rotation equivalence on the translation-equivalence classes, or on the representatives of these classes.

\subsection{Preliminary: Induced Quotient Action}
\label{sec:prelim-induced-quotient-action}

Let $G\curvearrowright X$ be a group action denoted by $\cdot$ and $A \trianglelefteq G$ be a normal subgroup.

\begin{theorem}\label{thm:induced-quotient-action}
Define $\bullet: G/A \times X/A \to X/A$ by
\begin{align}\label{eqn:induced-quotient-action}
(gA) \bullet (A\cdot x) := A\cdot (g\cdot x),
\end{align}
then $\bullet$ is a group action.
We call it the \emph{induced quotient action} $G/A\curvearrowright X/A$ (induced from the group action $G \curvearrowright X$).
\end{theorem}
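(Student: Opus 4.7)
The plan is to verify two things about the prescription~\eqref{eqn:induced-quotient-action}: that it actually defines a function $\bullet: G/A \times X/A \to X/A$ (well-definedness), and that this function satisfies the two group-action axioms (identity and compatibility). The main obstacle is the first, since the other two are formal rewrites.

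For well-definedness, I would pick representatives $g,g' \in G$ with $gA = g'A$ and $x,x' \in X$ with $A\cdot x = A\cdot x'$, and show that $A\cdot(g\cdot x) = A\cdot(g'\cdot x')$. Write $g' = ga_1$ and $x' = a_2\cdot x$ with $a_1,a_2 \in A$, so that $g'\cdot x' = (ga_1 a_2)\cdot x$. The goal becomes showing $(ga_1a_2)\cdot x$ and $g\cdot x$ lie in the same $A$-orbit, i.e.\ that $g a_1 a_2 \cdot x \in A\cdot(g\cdot x)$. This is where normality enters in an essential way: since $A \trianglelefteq G$, we have $g a_1 a_2 g^{-1} \in A$, so $g a_1 a_2 = a_3 g$ for some $a_3 \in A$, and hence $(g a_1 a_2)\cdot x = a_3 \cdot (g\cdot x) \in A\cdot(g\cdot x)$. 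This is the one step that would fail if $A$ were only a subgroup, and it is the key conceptual content of the theorem.

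For the identity axiom, $(eA)\bullet(A\cdot x) = A\cdot(e\cdot x) = A\cdot x$, using that $G \curvearrowright X$ is a group action. For compatibility, I would compute
\begin{align*}
\bigl((g_1 A)(g_2 A)\bigr)\bullet(A\cdot x)
&= \bigl((g_1 g_2) A\bigr)\bullet(A\cdot x) \\
&= A\cdot\bigl((g_1 g_2)\cdot x\bigr) \\
&= A\cdot\bigl(g_1\cdot(g_2\cdot x)\bigr) \\
&= (g_1 A)\bullet\bigl(A\cdot(g_2\cdot x)\bigr) \\
&= (g_1 A)\bullet\bigl((g_2 A)\bullet(A\cdot x)\bigr),
\end{align*}
using the definition of multiplication in $G/A$, the definition of $\bullet$, and the compatibility axiom of $G\curvearrowright X$.

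I do not anticipate any surprises; the entire argument is a standard cosets-and-orbits verification, and once well-definedness is in hand the rest is mechanical. The remark worth making (in the writeup, though not strictly part of the proof) is that this is exactly the reason the theorem requires $A\trianglelefteq G$ rather than merely $A\leq G$: without normality, the right-hand side of~\eqref{eqn:induced-quotient-action} depends on the chosen representative $g$, and $\bullet$ fails to be a function at all.
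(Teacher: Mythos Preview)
Your proposal is correct and complete; the well-definedness argument via normality, followed by the identity and compatibility verifications, is exactly the standard route. The paper itself omits the proof of this theorem entirely, remarking at the start of Section~\ref{sec:stage-2} that ``most conclusions in this section are pretty standard, whose proofs are straightforward thus omitted,'' so your write-up simply supplies what the authors left implicit.
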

Let $\sim_G$ denote the equivalence relation (on $X$) associated with the group action $G \curvearrowright X$; let $\sim_{G/A}$ denote the equivalence relation (on $X/A$) associated with the induced quotient action $G/A \curvearrowright X/A$.
Next theorem connects the two.

\begin{theorem}\label{thm:induced-quotient-action-and-its-inducing-group-action}
For any $x,x' \in X$, $x \sim_G x'$ if and only if $A\cdot x \sim_{G/A} A\cdot x'$.
\end{theorem}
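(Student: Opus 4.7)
The plan is to prove both directions directly from the definitions, unpacking $\sim_G$, $\sim_{G/A}$, and the action formula~\eqref{eqn:induced-quotient-action}. The statement is essentially a bookkeeping result: it says that collapsing each $A$-orbit to a point and then acting by the quotient $G/A$ recovers exactly the $G$-equivalence on $X$. There is no real obstacle; the only thing to be careful about is that in the backward direction the quotient action gives equality of $A$-orbits (not equality of points), so one has to re-expand that into a $G$-relation.

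For the forward direction, suppose $x \sim_G x'$, so there is some $g \in G$ with $x' = g \cdot x$. Then by definition of the induced quotient action,
\[
(gA) \bullet (A \cdot x) = A \cdot (g \cdot x) = A \cdot x',
\]
which exhibits $A \cdot x$ and $A \cdot x'$ as lying in the same $G/A$-orbit, i.e. $A\cdot x \sim_{G/A} A\cdot x'$.

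For the backward direction, suppose $A\cdot x \sim_{G/A} A\cdot x'$. Then there exists $gA \in G/A$ with $(gA)\bullet(A\cdot x) = A\cdot x'$, i.e. $A\cdot(g\cdot x) = A\cdot x'$. Since these are equal $A$-orbits, $x'$ and $g\cdot x$ lie in the same $A$-orbit, so there exists $a \in A$ with $x' = a\cdot(g\cdot x) = (ag)\cdot x$. Because $ag \in G$, this gives $x \sim_G x'$, completing the equivalence.

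The only place that requires a moment of thought is verifying that the argument does not secretly depend on the choice of representative $g$ of the coset $gA$; but this is already built into the well-definedness of $\bullet$ established in Theorem~\ref{thm:induced-quotient-action}, so I would cite it rather than reprove it. Given that, the whole proof is a three-line verification in each direction and can be presented inline without breaking it into lemmas.
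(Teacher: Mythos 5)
Your proof is correct, and it is exactly the standard direct verification the paper has in mind --- the paper explicitly omits the proof of this theorem as ``straightforward,'' and your two-direction unpacking of the definitions (with the key observation in the backward direction that $A\cdot(g\cdot x)=A\cdot x'$ yields $x'=(ag)\cdot x$ for some $a\in A$) is the intended argument. Citing Theorem~\ref{thm:induced-quotient-action} for the well-definedness of $\bullet$ rather than reproving it is also the right call.
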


We introduce two new assumptions, separately.
Under each assumption, we can rewrite the equivalence relation $\sim_{G/A}$ on $X/A$ in a new form.

\vspace{-0.05in}
\paragraph{\underline{First new assumption}}
Under the general setting ($G\curvearrowright X$, $A\trianglelefteq G$), in addition, assume that $G$ has a semidirect-product decomposition: $G = \left[ AB \right]$.
This allows us to rewrite the definition of $\sim_{G/A}$ as follows: for any $x,x' \in X$
\begin{align*}
A\cdot x \sim_{G/A} A\cdot x' &\iff \exists g \in G \quad s.t. \quad A\cdot x' = (gA) \bullet (A\cdot x) = A\cdot (g\cdot x) \\
&\iff \exists b \in B \quad s.t. \quad A\cdot x' = (bA) \bullet (A\cdot x) = A\cdot (b\cdot x).
\end{align*}

\vspace{-0.15in}
\paragraph{\underline{Second new assumption}}
Under the general setting ($G\curvearrowright X$, $A\trianglelefteq G$), in addition, assume that $\rho: X \to X$ is an orbit-representative map of $A\curvearrowright X$.
This allows us to rewrite the definition of $\sim_{G/A}$ as follows: for any $x,x' \in X$
\begin{align*}
A\cdot x \sim_{G/A} A\cdot x' &\iff A\cdot \rho(x) \sim_{G/A} A\cdot x' \\
&\iff \exists g \in G  \quad s.t. \quad A\cdot x' = (gA) \bullet (A\cdot \rho(x)) = A\cdot (g\cdot \rho(x)) \\
&\iff \exists g \in G \quad s.t. \quad \rho(x') = \rho(g\cdot \rho(x)) ~=:~ g_\star(\rho(x)).
\end{align*}
In the last expression, we introduced a new function $g_\star$ defined as follows.

\begin{definition}
Under the general setting and the second new assumption, for any $g\in G$, we define the \emph{projected-$g$} function $g_\star: {\rm Im}\rho \to {\rm Im}\rho$ by $g_\star(\omega) = \rho(g\cdot \omega)$.
\end{definition}

\begin{remark}
As an orbit-representative map, $\rho$ is a projection onto the fundamental domain ${\rm Im}\rho$ (in the sense of an idempotent).
This explains why we call $g_\star$ the ``projected-$g$'' function, since it first applies the group action $g \cdot $ and then the projection $\rho$.
More concisely, $g_\star$ can be expressed by the diagram:
\begin{align*}
{\rm Im}\rho \xhookrightarrow{\quad \subseteq \quad} X \xrightarrow{\quad g\cdot \quad} X \xrightarrow{\quad \rho \quad} {\rm Im}\rho.
\end{align*}
\end{remark}

If both assumptions are added to the general setting, the following is immediate as a corollary of Theorem~\ref{thm:induced-quotient-action-and-its-inducing-group-action}.
We present it in its entirety as follows.
\begin{corollary}\label{coro:induced-quotient-action-and-its-inducing-group-action}
Let $G$ be a group that has a semidirect-product decomposition: $G = \left[ AB \right]$.
Let $X$ be a set, $\cdot : G \times X \to X$ be a group action $G\curvearrowright X$, and $\bullet: G/A \times X/A \to X/A$ be the induced quotient action $G/A \curvearrowright X/A$.
Further, let $\rho: X \to X$ be an orbit-representative map of $A\curvearrowright X$, and $g_\star: {\rm Im}\rho \to {\rm Im}\rho$ be the corresponding projected-$g$ function for any $g \in G$. Then, for any $x,x' \in X$,
\begin{align}
\label{eqn:induced-quotient-action-and-its-inducing-group-action-q1}
&G\cdot x = G\cdot x' \\
\label{eqn:induced-quotient-action-and-its-inducing-group-action-q2}
\iff &(G/A)\bullet (A\cdot x) = (G/A) \bullet (A\cdot x')\\
\label{eqn:induced-quotient-action-and-its-inducing-group-action-q3}
\iff &\exists b \in B \quad s.t. \quad \rho(x') = b_\star(\rho(x)).
\end{align}
\end{corollary}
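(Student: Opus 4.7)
The plan is to prove \eqref{eqn:induced-quotient-action-and-its-inducing-group-action-q1} $\Leftrightarrow$ \eqref{eqn:induced-quotient-action-and-its-inducing-group-action-q2} $\Leftrightarrow$ \eqref{eqn:induced-quotient-action-and-its-inducing-group-action-q3} by assembling the two rewrites of $\sim_{G/A}$ that are set up just above the statement, together with Theorem~\ref{thm:induced-quotient-action-and-its-inducing-group-action}. The first equivalence is pure transport of structure between a group action and its induced quotient action, whereas the second equivalence is where the additional hypotheses ($G = [AB]$ and existence of an orbit-representative map $\rho$) are actually used.

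For \eqref{eqn:induced-quotient-action-and-its-inducing-group-action-q1} $\Leftrightarrow$ \eqref{eqn:induced-quotient-action-and-its-inducing-group-action-q2}, I would just invoke Theorem~\ref{thm:induced-quotient-action-and-its-inducing-group-action}. For \eqref{eqn:induced-quotient-action-and-its-inducing-group-action-q2} $\Leftrightarrow$ \eqref{eqn:induced-quotient-action-and-its-inducing-group-action-q3}, I would unfold \eqref{eqn:induced-quotient-action-and-its-inducing-group-action-q2} via \eqref{eqn:induced-quotient-action} as: there exists $g \in G$ with $A \cdot x' = A \cdot (g \cdot x)$. Applying $G = [AB]$, write $g = ab$ uniquely with $a \in A$, $b \in B$; since the coset absorbs $a$, we have $A \cdot (g \cdot x) = A \cdot (b \cdot x)$, so the quantifier over $G$ collapses to one over $B$. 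Finally, using the representative property $A \cdot y = A \cdot \rho(y)$ and the orbit-labeling property ``$A \cdot y = A \cdot y' \Leftrightarrow \rho(y) = \rho(y')$'' (both from Theorem~\ref{thm:three-properties-of-rep-qmap}), the condition $A \cdot x' = A \cdot (b \cdot x)$ rewrites as $\rho(x') = \rho(b \cdot \rho(x)) = b_\star(\rho(x))$, which is \eqref{eqn:induced-quotient-action-and-its-inducing-group-action-q3}.

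I expect no serious obstacle; the corollary is essentially a repackaging of the two ``new assumption'' paragraphs. The only care-points are two one-line justifications: $A \cdot (ab \cdot x) = A \cdot (b \cdot x)$ follows from closure of $A$ under multiplication, and $\rho(b \cdot x) = \rho(b \cdot \rho(x))$ follows from $b \cdot x \sim_A b \cdot \rho(x)$, which in turn uses $x \sim_A \rho(x)$ together with normality of $A$ in $G$ (needed to push the witnessing $A$-element across $b$ via conjugation). Neither requires new machinery beyond what has already been developed.
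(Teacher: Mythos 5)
Your proof is correct and follows exactly the route the paper intends: the paper gives no separate proof of this corollary, declaring it immediate from Theorem~\ref{thm:induced-quotient-action-and-its-inducing-group-action} combined with the two ``new assumption'' rewrites of $\sim_{G/A}$, which is precisely what you assemble. Your two care-points---coset absorption of the $A$-part of $g$ under $G=[AB]$, and $\rho(b\cdot x)=\rho(b\cdot\rho(x))$ via normality of $A$ (\ie $B\leq {\rm N}_G(A)$)---are exactly the right details to make the paper's ``immediate'' explicit.
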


\subsection{Computing the Rotation Equivalence $\sim_{\rcal_{\langle S \rangle}}$ on ${\rm Im}\rho_\tcal$ (Step 4-5)}
\label{sec:step-4-5}

We return to our main mathematical object in this paper---the subgroup $\langle S \rangle$ generated from a finite and atomic subset $S \subseteq \iz$.
Backed by Corollary~\ref{coro:induced-quotient-action-and-its-inducing-group-action}, we are now ready to introduce the rotation equivalence $\sim_{\rcal_{\langle S \rangle}}$ on the fundamental domain ${\rm Im}\rho_\tcal$ (of $\tcal_{\langle S \rangle} \curvearrowright \ambsp$), and to show that it is the last piece needed to solve our orbit computation problem~\eqref{eqn:orbit-computation-practical-special}.

Referring to the generic notations in Section~\ref{sec:prelim-induced-quotient-action}, here we specifically let $G = \langle S \rangle$, $A = \tcal_{\langle S \rangle}$, $B = \rcal_{\langle S \rangle}$, $X = \ambsp$, $\rho = \rho_\tcal$.
One can check: this case satisfies all the conditions in Corollary~\ref{coro:induced-quotient-action-and-its-inducing-group-action}.

\begin{definition}\label{def:rotation-equivalence-on-fd}
We define the \emph{rotation equivalence $\sim_{\rcal_{\langle S \rangle}}$ on ${\rm Im}\rho_\tcal$} as follows: for any $\rho_\tcal(x), \rho_\tcal(x') \in {\rm Im}\rho_\tcal$,
\begin{align}\label{eqn:rotation-equivalence-on-fd}
\rho_\tcal(x) \sim_{\rcal_{\langle S \rangle}} \rho_\tcal(x') \iff \exists r_R \in \rcal_{\langle S \rangle} \quad s.t. \quad \rho_\tcal(x') = r_{R\star}(\rho_\tcal(x)).
\end{align}
(\underline{Note:} $r_{R\star}$ is a shorthand notation for $(r_R)_{\star}$ for the sake of notational brevity.)
\end{definition}

\begin{remark}
While the translation equivalence $\sim_{\tcal_{\langle S \rangle}}$ is on $\ambsp$, the rotation equivalence $\sim_{\rcal_{\langle S \rangle}}$ is on ${\rm Im}\rho_\tcal$.
Further, $\sim_{\rcal_{\langle S \rangle}}$ is essentially the equivalence relation $\sim_{\langle S \rangle/\tcal_{\langle S \rangle}}$ associated with the induced quotient action $\langle S \rangle/\tcal_{\langle S \rangle} \curvearrowright \ambsp/\tcal_{\langle S \rangle}$.
This is immediate from \eqref{eqn:induced-quotient-action-and-its-inducing-group-action-q2}$\iff$\eqref{eqn:induced-quotient-action-and-its-inducing-group-action-q3} in Corollary~\ref{coro:induced-quotient-action-and-its-inducing-group-action} and Definition~\ref{def:rotation-equivalence-on-fd}, which collectively imply that for any $x,x'\in \ambsp$,
\begin{align*}
\rho_\tcal(x) \sim_{\rcal_{\langle S \rangle}} \rho_\tcal(x') \iff (\tcal_{\langle S \rangle}\cdot x) \sim_{\langle S \rangle/\tcal_{\langle S \rangle}} (\tcal_{\langle S \rangle}\cdot x').
\end{align*}
\end{remark}

The next theorem is immediate from \eqref{eqn:induced-quotient-action-and-its-inducing-group-action-q1}$\iff$\eqref{eqn:induced-quotient-action-and-its-inducing-group-action-q3} in Corollary~\ref{coro:induced-quotient-action-and-its-inducing-group-action}, which is the cornerstone of proving the correctness of our entire algorithmic roadmap.

\begin{theorem}\label{thm:induced-quotient-action-in-s-gen}
For any $x,x' \in \ambsp$, $\langle S \rangle \cdot x = \langle S \rangle \cdot x'$ if and only if there exists a rotation $r_R \in \rcal_{\langle S \rangle}$ such that $\rho_\tcal(x') = r_{R\star}(\rho_\tcal(x))$.
\end{theorem}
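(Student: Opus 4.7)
The plan is to verify that this theorem is a direct specialization of Corollary~\ref{coro:induced-quotient-action-and-its-inducing-group-action}, after which the desired equivalence will fall out immediately as the instance \eqref{eqn:induced-quotient-action-and-its-inducing-group-action-q1}$\iff$\eqref{eqn:induced-quotient-action-and-its-inducing-group-action-q3}. First I would set up the dictionary between the generic objects of Section~\ref{sec:prelim-induced-quotient-action} and the concrete objects here: take $G := \langle S \rangle$, $A := \tcal_{\langle S \rangle}$, $B := \rcal_{\langle S \rangle}$, $X := \ambsp$, and $\rho := \rho_\tcal$, with the group action $\cdot$ being the restriction of the ambient action $\iz \curvearrowright \ambsp$ to $\langle S \rangle$.

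Next I would discharge the three hypotheses that Corollary~\ref{coro:induced-quotient-action-and-its-inducing-group-action} requires. The semidirect-product decomposition $G = [AB]$ is exactly Equation~\eqref{eqn:atomically-generated-subgroup-isom-zn-binary-semidirect-product-decomp}, which is available precisely because $S$ is atomic (Theorem~\ref{thm:distinguishing-property-atomically-generated-subgroup-k-ary} applied to the binary decomposition $\iz = [\tz \circ \rz]$). That $\cdot$ is a group action is immediate from $\langle S \rangle \leq \iz$. The nontrivial hypothesis is that $\rho_\tcal$ is an orbit-representative map of $A \curvearrowright X$, and this is exactly Theorem~\ref{thm:rho-t-is-an-orbit-representative-map}.

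Once the hypotheses are in place, the equivalence \eqref{eqn:induced-quotient-action-and-its-inducing-group-action-q1}$\iff$\eqref{eqn:induced-quotient-action-and-its-inducing-group-action-q3} of the corollary reads: $\langle S \rangle \cdot x = \langle S \rangle \cdot x'$ holds if and only if there exists $b \in B = \rcal_{\langle S \rangle}$ with $\rho_\tcal(x') = b_\star(\rho_\tcal(x))$. Renaming $b$ as $r_R$ and noting that $r_{R\star}$ is a shorthand for the projected-$g$ function associated with $r_R$ (\ie $r_{R\star} = \rho_\tcal \circ r_R|_{{\rm Im}\rho_\tcal}$), this is exactly the stated conclusion.

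There is no real obstacle; the only thing to be careful about is the bookkeeping, \ie that the notational shorthand $r_{R\star}$ (as used in the algorithmic roadmap and in Definition~\ref{def:rotation-equivalence-on-fd}) coincides with the generic $b_\star$ supplied by Corollary~\ref{coro:induced-quotient-action-and-its-inducing-group-action}. This is a matter of unfolding definitions rather than of argument.
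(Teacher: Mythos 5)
Your proposal is correct and is exactly the paper's argument: the paper also obtains this theorem by specializing Corollary~\ref{coro:induced-quotient-action-and-its-inducing-group-action} with $G = \langle S \rangle$, $A = \tcal_{\langle S \rangle}$, $B = \rcal_{\langle S \rangle}$, $X = \ambsp$, $\rho = \rho_\tcal$ and reading off \eqref{eqn:induced-quotient-action-and-its-inducing-group-action-q1}$\iff$\eqref{eqn:induced-quotient-action-and-its-inducing-group-action-q3}. You actually discharge the hypotheses (via Equation~\eqref{eqn:atomically-generated-subgroup-isom-zn-binary-semidirect-product-decomp} and Theorem~\ref{thm:rho-t-is-an-orbit-representative-map}) more explicitly than the paper, which leaves that verification to the reader.
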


Now let us see how Steps 4 and 5 complete the whole algorithmic roadmap.
After Step 3, we have computed the set $\rho_\tcal(Z)$, \ie the projection of $Z$ onto the fundamental domain ${\rm Im}\rho_\tcal$ of the translation subgroup action $\tcal_{\langle S \rangle} \curvearrowright \ambsp$.
The while-loop in Step 4 defines a function $\rhowl: \rho_\tcal(Z) \to \rho_\tcal(Z)$ that maps every $\omega \in \rho_\tcal(Z)$ to its label $\rhowl(\omega)$.
After Step 5, every $x \in Z$ is labeled $\rhowl \circ \rho_\tcal(x)$.

\begin{theorem}\label{thm:algorithmic-roadmap-is-an-orbit-labeling-map-on-z}
The function $\lambda: = \rhowl \circ \rho_\tcal|_Z$, defined by the whole algorithmic roadmap, is an orbit-labeling map of $\langle S \rangle \curvearrowright \ambsp$ on $Z$.
\end{theorem}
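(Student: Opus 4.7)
The plan is to reduce the statement to a claim about $\rhowl$ via Theorem~\ref{thm:induced-quotient-action-in-s-gen}. That theorem already tells us, for any $x,x' \in \ambsp$, that $\langle S \rangle \cdot x = \langle S \rangle \cdot x'$ if and only if $\rho_\tcal(x) \sim_{\rcal_{\langle S \rangle}} \rho_\tcal(x')$. So after composing with $\rho_\tcal|_Z$, the goal collapses to proving the following: for all $x,x' \in Z$,
\[
\rhowl(\rho_\tcal(x)) = \rhowl(\rho_\tcal(x')) \iff \rho_\tcal(x) \sim_{\rcal_{\langle S \rangle}} \rho_\tcal(x').
\]
In other words, I must verify that the while-loop implements an orbit-labeling map of $\sim_{\rcal_{\langle S \rangle}}$ restricted to the finite set $\rho_\tcal(Z)$.

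To do this, I would analyze one iteration of the loop. At the moment an element $\omega$ is chosen from the current remainder, the algorithm forms
\[
C_\omega = \{r_{R\star}(\omega) \mid r_R \in \rcal_{\langle S \rangle}\} \cap \rho_\tcal(Z).
\]
Using Definition~\ref{def:rotation-equivalence-on-fd}, the first factor is exactly the $\sim_{\rcal_{\langle S \rangle}}$-class of $\omega$ inside ${\rm Im}\rho_\tcal$, so $C_\omega$ is precisely that class intersected with $\rho_\tcal(Z)$. Because $\sim_{\rcal_{\langle S \rangle}}$ is a genuine equivalence relation (it is the orbit relation of the induced quotient action $\langle S \rangle/\tcal_{\langle S \rangle} \curvearrowright \ambsp/\tcal_{\langle S \rangle}$, per Corollary~\ref{coro:induced-quotient-action-and-its-inducing-group-action}), these intersections partition $\rho_\tcal(Z)$ into its $\sim_{\rcal_{\langle S \rangle}}$-equivalence classes. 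Termination is immediate: $\rho_\tcal(Z)$ is finite (since $Z$ is), and every iteration removes at least $\omega$ itself.

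Given this identification, both directions of the desired equivalence follow. If $\rhowl(\rho_\tcal(x)) = \rhowl(\rho_\tcal(x')) = \omega$, then $\rho_\tcal(x),\rho_\tcal(x') \in C_\omega$, so each is $\sim_{\rcal_{\langle S \rangle}}$-equivalent to $\omega$ and hence to each other. Conversely, if $\rho_\tcal(x) \sim_{\rcal_{\langle S \rangle}} \rho_\tcal(x')$, both lie in the same $\sim_{\rcal_{\langle S \rangle}}$-class, which the loop processes in a single iteration and labels with one common representative; their $\rhowl$-values therefore agree. Combining this with the reduction from Theorem~\ref{thm:induced-quotient-action-in-s-gen} yields the orbit-labeling property on $Z$.

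The main obstacle I expect is not logical but bookkeeping: one must verify that the representatives $\omega$ chosen across different iterations really are pairwise $\sim_{\rcal_{\langle S \rangle}}$-inequivalent (otherwise distinct labels could appear within a single class). This follows because, by induction on iterations, once $C_\omega$ has been removed, every $\omega'$ picked afterwards lies outside the class $[\omega]_{\sim_{\rcal_{\langle S \rangle}}}$. I would package this as a short invariant of the loop before invoking it in the two-way argument above.
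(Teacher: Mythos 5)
Your proposal is correct and follows essentially the same route as the paper: both reduce the claim via Theorem~\ref{thm:induced-quotient-action-in-s-gen} to the observation that the while-loop assigns a common label exactly to the elements of each $\sim_{\rcal_{\langle S \rangle}}$-class intersected with $\rho_\tcal(Z)$. Your explicit loop invariant (that representatives picked in distinct iterations are pairwise inequivalent, so the sets $C_\omega$ partition $\rho_\tcal(Z)$) is a point the paper's chain of equivalences uses implicitly, so making it explicit is a welcome refinement rather than a departure.
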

\begin{proof}
Pick any $x, x' \in Z$.
First, note that $\lambda(x) = \rhowl(\rho_\tcal(x)) \in \rho_\tcal(Z)$, thus, there exists a $z \in Z$ such that $\rhowl(\rho_\tcal(x)) = \rho_\tcal(z)$.
By the definition of $\rhowl$ (the while-loop), $\rho_\tcal(x) \in C_{\rho_\tcal(z)}$.
By the definition of $C_{\rho_\tcal(z)}$, there exists a $r_R \in \rcal_{\langle S \rangle}$ such that $\rho_\tcal(x) = r_{R\star}(\rho_\tcal(z))$.
By Theorem~\ref{thm:induced-quotient-action-in-s-gen}, $\langle S \rangle \cdot x = \langle S \rangle \cdot z$.
Now following the above argument for $x' \in X$, we have
\begin{align*}
\lambda(x') = \rho_\tcal(z) &\iff \rhowl(\rho_\tcal(x')) = \rho_\tcal(z) \\
&\iff \rho_\tcal(x') \in C_{\rho_\tcal(z)} \\
&\iff \rho_\tcal(x') = r_{R'\star}(\rho_\tcal(z)) ~\mbox{ for some }~ r_{R'} \in \rcal_{\langle S \rangle}  \\
&\iff \langle S \rangle \cdot x' = \langle S \rangle \cdot z.
\end{align*}
Since $\lambda(x) = \rho_\tcal(z)$ and $\langle S \rangle \cdot x = \langle S \rangle \cdot z$, then
$\lambda(x) = \lambda(x') \iff \langle S \rangle \cdot x = \langle S \rangle \cdot x'$.
By Definition~\ref{def:orbit-labeling-map}, $\lambda$ is an orbit-labeling map of $\langle S \rangle \curvearrowright \ambsp$ on $Z \subseteq \ambsp$.
\end{proof}

The algorithmic roadmap implements the function $\lambda$, and outputs the partition $\{\hspace{0.03in}\lambda^{-1}(\{\lambda(x)\}) \mid x \in Z\hspace{0.03in}\}$ of $Z$.
Clearly, $\lambda$ satisfies the orbit-labeling property
\begin{align}\label{eqn:orbit-labeling-property-for-s-gen}
\lambda(x) = \lambda(x') \iff \langle S \rangle \cdot x = \langle S \rangle \cdot x' \quad \mbox{ for any } x, x' \in Z.
\end{align}
One can check: for any $x, x' \in Z$, the left side of \eqref{eqn:orbit-labeling-property-for-s-gen} $\iff x' \in \lambda^{-1}(\{\lambda(x)\})$; the right side of \eqref{eqn:orbit-labeling-property-for-s-gen} $\iff x' \in (\langle S \rangle \cdot x) \cap Z$.
This implies that
\begin{align*}
\{\hspace{0.03in}\lambda^{-1}(\{\lambda(x)\}) \mid x \in Z\hspace{0.03in}\} = \{\hspace{0.03in}(\langle S \rangle \cdot x) \cap Z \mid x \in Z\hspace{0.03in}\}.
\end{align*}
which verifies our algorithmic output equals the desired output in Problem~\eqref{eqn:orbit-computation-practical-special}.

\subsection{A Generators-Based Variant to Step~4 in the Algorithmic Roadmap}
\label{sec:variant-to-step-4}

Recall Step~4 is a while-loop containing a full enumeration of the rotation subgroup $\rcal_{\langle S\rangle} = \langle \rcal_S \rangle$ when computing $C_{\omega} = \{r_{R\star}(\omega) \mid r_R \in \rcal_{\langle S \rangle}\} \cap \rho_{\tcal}(Z)$.
Applying the same idea in computing a basis of $\langle \ncal_S^+ \rangle$ and $\langle \tcal_S^+ \rangle$ (\cf the end of Section~\ref{sec:step-1}), we propose a variant where we enumerate the generating set $\rcal_S$ instead of the full group $\langle \rcal_S \rangle$.
This gives an alternative way of computing $C_\omega$.
Initialize $C_\omega$ to be $\{\omega\}$.
Run an (inner) while-loop: as long as $C_\omega \neq \{r_{R\star}(\omega') \mid \omega' \in C_\omega, r_R \in \rcal_S\}$, update $C_\omega$ into $\{r_{R\star}(\omega') \mid \omega' \in C_\omega, r_R \in \rcal_S\}$; otherwise, update $C_\omega$ into $C_\omega \cap \rho_\tcal(Z)$ and stop.
One can check: $C_\omega$ produced in this way is the same as the desired $\{r_{R\star}(\omega) \mid r_R \in \rcal_{\langle S \rangle}\} \cap \rho_{\tcal}(Z)$.
Albeit using generators is conceptually simpler than using the generated subgroup, generators-based approaches are incremental in nature, leaving no room for parallelization.
Contrarily, when the subgroup is already computed, the subsequent orbit computation can be fully parallelized.
Therefore, in light of parallel computing, a generators-based approach might not always be a clear winner in practice.

\section{Computational Complexity Analysis}
\label{sec:comp-complexity-analysis}

Following the algorithmic roadmap (Section~\ref{sec:algorithmic-roadmap}), we analyze the complexity of each step and discuss parallelization.
Step~1 is outlined by Figure~\ref{fig:flow-of-computing-basis-of-translations-in-s}, having six substeps assuming the group-based approach.
Substeps~2,4,5 can be parallelized.%
\vspace{-0.1in}
\begin{enumerate}[label=$\blacksquare~$, itemsep=-0.03in]
\item Substep~1: use an off-the-shelf implementation to compute the permutation subgroup $\pcal_{\langle S \rangle} = \langle \pcal_S \rangle$, which can be done in $O(|\langle \pcal_S \rangle|)$ time.
\item Substep~2: generate $\ncal_S^+ = (\ncal_S)^{\pcal_{\langle S \rangle}}$ by $O(|\ncal_S||\pcal_{\langle S \rangle}|)$ conjugations.
\item Substep~3: first run the boolean version of Gaussian Elimination ($\mbox{GE}_b$) to get a basis of $\ncal_{\langle S \rangle}$ from its generating set $\ncal_S^+$; then generate $\ncal_{\langle S \rangle} = \langle \ncal_S^+ \rangle$ by enumerating all (boolean) linear combinations of the basis.
The former can be done by $O(n^2|\ncal_S^+|)$ bit-operations in a sequential-$\mbox{GE}_b$ and can be reduced to $O(n^2+n\log_2|\ncal_S^+|)$ bit-operations in a parallel-$\mbox{GE}_b$~\cite{KoccA1991};
the latter can be done in $O(2^d)$ time where $d$ is the size of the basis.

\underline{In comparison,} the generators-based approach replacing Substeps~1--3 can be done in $k\cdot O(n^2\cdot n|\pcal_S|) \leq O(n^4|\pcal_S|)$ assuming $k$ sequential-$\mbox{GE}_b$ runs.

\item Substep~4: generate $\rcal_{\langle S \rangle} = \ncal_{\langle S \rangle}\circ \pcal_{\langle S \rangle}$ by $O(|\ncal_{\langle S \rangle}||\pcal_{\langle S \rangle}|)$ compositions.
\item Substep~5: generate $\tcal_S^+ = (\tcal_S)^{\rcal_{\langle S \rangle}}$ by $O(|\tcal_S||\rcal_{\langle S \rangle}|)$ conjugations.
\item Substep~6: run the LLL algorithm to get a basis of $\tcal_{\langle S \rangle}$ from its generating set $\tcal_S^+$ in $O(|\tcal_S^+|^5n\log^3b)$ time (where $b$ is the largest Euclidean norm of the input vectors)~\cite{LenstraLL1982}.

\underline{In comparison,} the generators-based approach replacing Substeps~4--6 can be done in $k\cdot O(|\rcal_S|^5n^6\log^3b)$ assuming $k$ LLL runs (an upper bound on $k$ is unclear in this case).

\end{enumerate}
Step~2 involves basic matrix operations (\eg multiplication, inversion), which has a complexity of $O(2m^2n+m^3)$.
\underline{Note:} for a given generating set $S$, Steps 1 and 2 (or Stage 1) have nothing to do with the subset $Z$, and are considered one-time pre-computation and cached in the computer memory.

Entering Stage~2, Step~3 involves basic matrix-vector multiplications with a complexity of $O(mn)$ for every $x \in Z$.
So, the total of this step is $O(mn|Z|)$, but the computation of $|Z|$ projections can be parallelized.
Step~4 executes the while-loop whose complexity is dominated by $c\cdot O(|\rcal_{\langle S \rangle}|)$ projected rotations where $c = |Z/\langle S \rangle|$; inner loop over $\rcal_{\langle S \rangle}$ is parallelable.
The variant to this step (Section~\ref{sec:variant-to-step-4}) can be done in $\leq \sum_\omega k_\omega\cdot O(|C_\omega||\rcal_S|)$ time (sum over $c$ terms) assuming $k_\omega$ (non-parallelable) iterations for each $\omega$.
The outer while-loop can be parallelized over $Z$, but not over $c$ representatives from $Z/\langle S \rangle$; hence, if there are abundant computing resources, parallelization is preferred.
Step~5 is simply an $O(|Z|)$ labeling procedure and can certainly be parallelized.

\section{Conclusions, Applications, and Future Work}
\label{sec:conclusions-applications-future-work}

In this paper, we present a specialized algorithm to solve our restricted orbit computation problem under the isometry subgroup action $\langle S \rangle \curvearrowright \ambsp$, with the generating set $S \subseteq \iz$ being both finite and \emph{atomic}.
The essence of the algorithm is to leverage the \emph{semidirect-product decomposition} of the acting subgroup $\langle S \rangle$, and our newly introduced notion of an \emph{atomically generated subgroup} is key for such subgroup to inherit the global semidirect-product structure from $\iz$.
According to the decomposition structure, our algorithm takes two major stages---considering \emph{translation equivalence} first and \emph{rotation equivalence} in succession---to eventually reach an algorithmic implementation of an \emph{orbit-labeling map} $\lambda$.
From $\lambda$, we can precisely recover the desired orbit-partition restricted to any finite input space $Z$.
Our specialized algorithm is designed in a way that exploits parallel computing in many of its subroutines, so it outperforms more generic and/or non-parallelable approaches (if any).

Besides its algorithmic merit, solving our restricted orbit computation problem is useful in many applications.
For example, we can make (and then learn) computational abstractions of music composition in the form of different sets of orbits~\cite{YuMV2019}, since many music transformations are isometries~\cite{Tymoczko2010}, \eg music transpositions are translations, melodic inversions are negations, and harmonic inversions are permutations.
Similar techniques for computational abstraction may be used to discover isometry-induced symmetries for biological data, \eg single-cell RNA sequencing data from the developing mouse retina~\cite{YuVS2019,ClarkSSCDSSHRJRO2019}.

To explore more applications in the future, we are interested in extending the main results here for Euclidean isometries acting on $\ambsp$ to hyperbolic isometries acting on hyperbolic spaces, since not only can we possibly leverage a similar semidirect-product decomposition, but also hyperbolic spaces may be useful in modeling some data spaces in the real world, \eg human olfactory space~\cite{ZhouSS2018}.









\bibliographystyle{elsarticle-num}
\section*{References}
\bibliography{abrv,conf_abrv,isom-orbits}

\renewcommand{\theHsection}{A\arabic{section}}
\appendix

\newpage
\section{Mathematical Notations}
\label{app:math-notations}

\begin{table}[h!]
\centering
\begin{tabular}{rr}
    {\small{\textit{Notation}}} & {\small \textit{Meaning}} \\
    \hline
    $G$ & a group \\
    $X$ & a set \\
    $G \curvearrowright X$ & a $G$-action on $X$ \\
    $A_k \cdots A_1$ & the product of $A_k, \ldots, A_1 \subseteq G$ \\
    $[A_2A_1]$ & the (inner) semidirect product of $A_2, A_1 \leq G$ \\
    $\left[A_k  \cdots  A_1\right]$ & $\left[ A_k \left[ A_{k-1} \cdots A_1 \right] \right]$ recursively \\
    $G = \left[A_k  \cdots  A_1\right]$ & an (inner) semidirect-product decomposition of $G$ \\
    $\iz$ & the group of isometries of $\ambsp$ \\
    $\tz$ & the group of translations of $\ambsp$ \\
    $\rz$ & the group of (generalized) rotations of $\ambsp$ \\
    $\nz$ & the group of (coordinate-wise) negations of $\ambsp$ \\
    $\pz$ & the group of (coordinate-wise) permutations of $\ambsp$ \\
    $t_v: \ambsp \to \ambsp$ & a translation, $t_v(x) := x + v$ \\
    $r_R: \ambsp \to \ambsp$ & a (generalized) rotation, $r_R(x) := Rx$ \\
    $\tcal_S$ & $S \cap \tz$, for any $S \subseteq \iz$ \\
    $\rcal_S$ & $S \cap \rz$, for any $S \subseteq \iz$ \\
    $\ncal_S$ & $S \cap \nz$, for any $S \subseteq \iz$ \\
    $\pcal_S$ & $S \cap \pz$, for any $S \subseteq \iz$ \\
    $\cdot^b: A \to A$ & the conjugation function $a^b := bab^{-1}$ \\
    $q$ & orbit-quotient map \\
    $\lambda~(= \overline{\lambda} \circ q)$ & orbit-labeling map ($\overline{\lambda}$ is injective) \\
    $\rho~(= \overline{\rho}\circ q)$ & orbit-representative map ($\overline{\rho}$ is a section of $q$) \\
    $B^\dagger~(= (B^\top B)^{-1}B^\top)$ & the pseudoinverse of a matrix $B$ (full column rank) \\
    $\lfloor \cdot \rfloor: \reals^m \to \integers^m$ & the (coordinate-wise) floor function \\
    $\rho_\tcal: \ambsp \to \ambsp$ & $\rho_\tcal(x) := x - B\lfloor B^\dagger x\rfloor$, an orbit-representative map \\
    $g_\star$ & projected-$g$ function
  \end{tabular}
\caption*{}
\end{table}

\newpage
\section{Mathematical Proofs}
\label{app:math-proofs}
\renewcommand{\appendixname}{}

\subsection{Theorem~\ref{thm:distinguishing-property-atomically-generated-subgroup-k-ary}}
\label{app:distinguishing-property-atomically-generated-subgroup-k-ary}
\begin{proof}
We prove by induction on $k$.
First of all, the base case (\ie $k=2$) is Theorem~\ref{thm:distinguishing-property-atomically-generated-subgroup-2-ary}.
Now assuming Theorem~\ref{thm:distinguishing-property-atomically-generated-subgroup-k-ary} holds for all $k-1$, we show that it also holds for $k$.

Let $H = \left[ A_{k-1}\cdots A_1 \right]$, then by the recursive definition of the bracket notation, $\ginner = \left[ A_k \left[A_{k-1} \cdots A_1\right] \right]$.
Therefore, $G = \left[ A_kH \right]$.
By Theorem~\ref{thm:distinguishing-property-atomically-generated-subgroup-2-ary},
\begin{gather}
\label{eqn:top-bin-level-decomp-temp}
\langle S \rangle = \left[(\acal_k)_{\langle S \rangle}{\hcal}_{\langle S \rangle} \right] \quad \mbox{ where } \\
\label{eqn:top-bin-level-decomp-1-temp}
(\acal_k)_{\langle S \rangle} = \langle (\acal_k)_S^+ \rangle = \varphi_{A_k}(\langle S \rangle) \\
\label{eqn:top-bin-level-decomp-2-temp}
\hcal_{\langle S \rangle} = \langle \hcal_S \rangle = \varphi_{H} (\langle S \rangle).
\end{gather}
In the above, $(\acal_k)_S^+ := ((\acal_k)_S)^{\hcal_{\langle S \rangle}}$ and notably, $\hcal_S = \hcal_S^+$.

Now we zoom into $H = \left[A_{k-1}\cdots A_1\right]$.
First, since $S$ is an atomic subset of $G$, \ie $S \subseteq \atomicsuper$, then $\hcal_S = S \cap H \subseteq (\atomicsuper)\cap H = \{e\} \cup A_{k-1}\cup \cdots \cup A_1 = A_{k-1}\cup \cdots \cup A_1$, which implies that $\hcal_S$ is an atomic subset of $H$ with respect to the $(k-1)$-ary semidirect-product decomposition of $H$.
Then, applying the induction hypothesis,
\begin{gather*}
\langle \hcal_S \rangle = \left[ (\acal_{k-1})_{\langle \hcal_S \rangle} \cdots (\acal_1)_{\langle \hcal_S \rangle} \right] \quad \mbox{ where } \\
(\acal_j)_{\langle \hcal_S \rangle} = \langle (\acal_j)_{\hcal_S}^+ \rangle = \varphi_{A_j}(\langle \hcal_S \rangle) \quad \mbox{ for any } j \in \{1, \ldots, k-1\}.
\end{gather*}
In the above, $(\acal_j)_{\hcal_S}^+ := ((\acal_j)_{\hcal_S})^{(\acal_{j-1})_{\langle \hcal_S \rangle} \cdots (\acal_1)_{\langle \hcal_S \rangle}}$.
By Equation~\eqref{eqn:top-bin-level-decomp-2-temp}, we observe that $\langle \hcal_S \rangle = \hcal_{\langle S \rangle}$, thus, for $j \in \{1, \ldots, k-1\}$,
\begin{align*}
(\acal_j)_{\langle \hcal_S \rangle} &= (\acal_j)_{\hcal_{\langle S \rangle}} = \langle S \rangle \cap H \cap A_j = \langle S \rangle \cap A_j = (\acal_j)_{\langle S \rangle} \\
\varphi_{A_j}(\langle \hcal_S \rangle) &= \varphi_{A_j}(\hcal_{\langle S \rangle}) = \varphi_{A_j}((\acal_k)_{\langle S \rangle}\hcal_{\langle S \rangle}) = \varphi_{A_j}(\langle S \rangle) \\
(\acal_j)_{\hcal_S} &= S \cap H \cap A_j = S \cap A_j = (\acal_j)_S.
\end{align*}
Therefore, we can rewrite the above expression for $\langle \hcal_S \rangle$ as follows:
\begin{gather*}
\hcal_{\langle S \rangle} = \left[ (\acal_{k-1})_{\langle S \rangle} \cdots (\acal_1)_{\langle S \rangle} \right] \quad \mbox{ where } \\
(\acal_j)_{\langle S \rangle} = \langle (\acal_j)_S^+ \rangle = \varphi_{A_j}(\langle S \rangle) \quad \mbox{ for any } j \in \{1, \ldots, k-1\}.
\end{gather*}
In the above, $(\acal_j)_S^+ := ((\acal_j)_S)^{(\acal_{j-1})_{\langle S \rangle} \cdots (\acal_1)_{\langle S \rangle}}$.

Plugging the above expression for $\hcal_{\langle S \rangle}$ in Expressions~\eqref{eqn:top-bin-level-decomp-temp}--\eqref{eqn:top-bin-level-decomp-2-temp}, we have
\begin{align*}
\langle S \rangle = \left[ (\acal_k)_{\langle S \rangle} \left[ (\acal_{k-1})_{\langle S \rangle} \cdots (\acal_1)_{\langle S \rangle} \right] \right] = \left[ (\acal_k)_{\langle S \rangle} \cdots (\acal_1)_{\langle S \rangle} \right],
\end{align*}
where for any $j \in \{1, \ldots, k\}$, $(\acal_j)_{\langle S \rangle} = \langle (\acal_j)_S^+ \rangle = \varphi_{A_j}(\langle S \rangle)$.

We finally check that the augmented generating sets are also consistent.
By definition, $(\acal_k)_S^+ = ((\acal_k)_S)^{\hcal_{\langle S \rangle}} = ((\acal_k)_S)^{(\acal_{k-1})_{\langle S \rangle} \cdots (\acal_1)_{\langle S \rangle}}$, which is indeed consistent with the other formulae $(\acal_j)_S^+ = ((\acal_j)_S)^{(\acal_{j-1})_{\langle S \rangle} \cdots (\acal_1)_{\langle S \rangle}}$ for any $j \in \{1, \ldots, k-1\}$.
This completes the proof.
\end{proof}

\subsection{Theorem~\ref{thm:rep-qmap-def-alt1}}
\label{app:rep-qmap-def-alt1}
\begin{proof}
Let $\overline{\rho}: X/G \to X$ be a function defined by $\overline{\rho}(G\cdot x) := \rho(x)$.
Pick any $G\cdot x$, $G\cdot x' \in X/G$, and apply the two directions of \ref{item1:three-properties-of-rep-qmap}.
The fact that $G\cdot x = G\cdot x' \implies \rho(x) = \rho(x')$ indicates that $\overline{\rho}$ is well-defined; the fact that $\rho(x) = \rho(x') \implies G\cdot x = G\cdot x'$ indicates that $\overline{\rho}$ is injective.
Further, for any $G \cdot x \in X/G$, $q \circ \overline{\rho}(G\cdot x) = q(\rho(x)) = G\cdot \rho(x) = G\cdot x$, where we used \ref{item2:three-properties-of-rep-qmap} in the last equality.
This implies that $q \circ \overline{\rho} = \idty_{_{X/G}}$.
Therefore, $\overline{\rho}$ is a section of $q$.
Finally, it is clear that $\rho = \overline{\rho} \circ q$ is an orbit-representative map of $G \curvearrowright X$.
\end{proof}

\subsection{Theorem~\ref{thm:rep-qmap-def-alt2}}
\label{app:rep-qmap-def-alt2}
\begin{proof}
Let $\overline{\rho}: X/G \to X$ be a function defined by $\overline{\rho}(G\cdot x) := \rho(x)$.
Following the same argument from the proof of Theorem~\ref{thm:rep-qmap-def-alt1}, \ref{item1:three-properties-of-rep-qmap} alone guarantees that $\overline{\rho}$ is well-defined and injective.
Further, for any $G \cdot x \in X/G$,
\begin{align*}
\overline{\rho} \circ q \circ \overline{\rho}(G\cdot x) = \overline{\rho} \circ q \circ \overline{\rho} \circ q(x) = \rho \circ \rho(x) = \rho(x) = \overline{\rho}(G\cdot x),
\end{align*}
where the second last equality is \ref{item3:three-properties-of-rep-qmap}, \ie $\rho$ is an idempotent.
Since $\overline{\rho}$ is injective, then the above equation further implies that $q \circ \overline{\rho}(G\cdot x) = G\cdot x$ for any $G \cdot x \in X/G$, \ie $q \circ \overline{\rho} = \idty_{_{X/G}}$.
Therefore, $\overline{\rho}$ is a section of $q$.
Finally, it is clear that $\rho = \overline{\rho} \circ q$ is an orbit-representative map of $G \curvearrowright X$.
\end{proof}

\end{document}